\newtheorem{theorem}{Theorem}[section]
\newtheorem{corollary}[theorem]{Corollary}
\newtheorem{lemma}[theorem]{Lemma}
\newtheorem{proposition}[theorem]{Proposition}
\theoremstyle{definition}
\newtheorem{definition}[theorem]{Definition}
\newtheorem{remark}[theorem]{Remark}
\numberwithin{equation}{section}
\newcommand{\func}[1]{\operatorname{#1}}
\begin{document}

\title[ Strong attractors
for the damped-driven Euler system] {Strong trajectory and global
$\mathbf{W^{1,p}}$-attractors for the damped-driven Euler system in
$\mathbb  R^2$}
\author[V.V. Chepyzhov, A.A. Ilyin, and S.V Zelik]
{Vladimir Chepyzhov${}^{1,3}$, Alexei Ilyin ${}^{1,2}$ and Sergey
Zelik${}^{4}$}

\begin{abstract}
We consider  the damped and driven two-dimensional
 Euler equations in the plane with weak solutions
 having finite energy and enstrophy. We show that these
 (possibly non-unique) solutions satisfy the energy and enstrophy
 equality. It is shown that this system has a strong global and
 a strong trajectory attractor in the Sobolev space $H^1$.
A similar result on the strong attraction  holds in
the spaces $H^1\cap\{u:\ \|\func{curl} u\|_{L^p}<\infty\}$
for $p\ge2$.
\end{abstract}

\subjclass[2000]{35B40, 35B41, 35Q35}
\keywords{Damped Euler equations, global and trajectoty attractors}

\thanks{The research of V.~Chepyzhov and A.~Ilyin was carried out in the
Institute for Information Transmission Problems, Russian Academy of
Science, at the expense of the Russian Science Foundation (project
14-50-00150). }

\address{
 \newline ${}^1$ Institute for Information Transmission Problems, Moscow 127994, Russia,
 \newline ${}^2$ Keldysh Institute of Applied Mathematics, Moscow 125047, Russia,
 \newline ${}^3$ National Research University Higher School of Economics, Moscow 101000,
 Russia,
 \newline ${}^4$ University of Surrey, Department of Mathematics, Guildford, GU2 7XH, UK.}

\email{chep@\,iitp.ru}

\email{ilyin@\,keldysh.ru}

\email{s.zelik@\,surrey.ac.uk}

\maketitle

\section{Introduction
\label{Sec0}}

In this work we study the following  dissipative  Euler equations
\begin{equation}\label{DDE}
\begin{cases}
\partial_t u+(u,\nabla)u+\nabla p+ru=g,\\
u\big|_{t=0}=u_0,\ \ \func{div} u=0
\end{cases}
\end{equation}
in the whole plane $x\in\mathbb{R}^2$. This system with linear
damping term $ru$, where $r>0$ is the Rayleigh or  Ekman friction
coefficient, is relevant in large scale geophysical
models~\cite{Ped}. See also~\cite{BP}, where this hydrodynamic
system is derived under certain assumptions from the Boltzmann
equations.

This system has attracted considerable attention over the last years.
Our interest  in this work is in the time dependent problem,
but for the moment we observe that
the regularity and uniqueness  of the
stationary solutions of~\eqref{DDE} in a bounded domain
in $\mathbb{R}^2$ were studied in Sobolev spaces \cite{BCT}, \cite{Sau}.
In the framework of the Lagrangian approach the
stability of  stationary solutions in H\"older spaces
was studied in~\cite{W}.

Closely related to \eqref{DDE} is the Navier--Stokes
perturbation of it
\begin{equation}\label{DDNS}
\begin{cases}
\partial_t u+(u,\nabla)u+\nabla p+ru=\nu \Delta u+g,\\
u\big|_{t=0}=u_0,\ \ \func{div} u=0,
\end{cases}
\end{equation}
which is studied in the vanishing viscosity limit $\nu\to0$.

We have to distinguish two fundamentally different phase spaces in
which systems \eqref{DDE} and \eqref{DDNS} are studied in case of
unbounded domains $\Omega\subseteq\mathbb{R}^2$. Namely, the usual
Sobolev spaces (so that, for instance in the case of $L^2$, we are
dealing with finite energy solutions) and the uniformly local
spaces in which the Sobolev norms on balls of arbitrary fixed
radius are uniformly bounded with respect to all translations.

In the case of periodic boundary conditions the estimate
of the fractal dimension of the global attractor for~\eqref{DDNS}
was obtained in~\cite{IMT},
where it was shown that the corresponding dynamical system
possesses a global attractor $\mathcal{A}$ (in $L^2$) with fractal dimension
satisfying the  estimate
\begin{equation}\label{min}
\dim_f\mathcal{A}\le
\frac38\,\frac{\|\nabla g\|^2}{\nu r^3}.
\end{equation}
 It was also shown in \cite{IMT} that if $r>0$ is fixed, then the  rate
of growth of the dimension estimate, as $\nu\to0$,   is sharp, and
the upper bounds were supplemented with a lower bound of the order
$1/\nu$. See also~\cite{IT} for the estimates of the same order for
number of the degrees of freedom expressed in terms of various
finite dimensional projections.

Since estimate~\eqref{min} does not explicitly contain the length scale,
it is reasonable to expect that this estimate survives in
$\mathbb{R}^2$ in the phase space of finite energy solutions.
It has recently been shown in~\cite{IPZ} that this is indeed the case.
Furthermore, due to the scale invariance of $\mathbb{R}^2$, the estimate
was imbedded in the family of estimates depending on the norm
of the right-hand side $g$ in the whole family of homogeneous
Sobolev spaces:
$$
\dim_f\mathcal A\le \frac{1-s^2}{64\sqrt{3}}
\left(\frac{1+|s|}{1-|s|}\right)^{|s|}\frac1{r^{2+s}\,\nu^{2-s}}\|g\|^2_{\dot H^{s}},
\quad s\in[-1,1].
$$
In particular, for $s=1$ we obtain
$$
\dim_f\mathcal A\le \frac{1}{16\sqrt{3}}
\frac{\|\nabla g\|^2}{\nu\,r^{3}},
$$
which up to a constant agrees with \eqref{min}.

In the case of uniformly local spaces (where the energy and higher
Sobolev norms in the whole $\mathbb{R}^2$ are infinite) one of the main
issues is the proof of the dissipative estimate.
For the uniformly local spaces in the viscous case $\nu>0$ the global attractors
for \eqref{DDNS}  in the strong topology
were constructed in~\cite{Ze13}, see also~\cite{Ze07,Ze8} for similar results
in channel-like domains. In the inviscid case the strong attractor
for \eqref{DDE} in the uniformly local $H^1$ space was recently
constructed in \cite{CZ15}.

For the  damped-driven Euler equations~\eqref{DDE} on the torus in
the phase space of finite energy (and enstrophy) solutions (where
the solutions are not necessarily unique) the weak $H^1$ attractors
were constructed in~\cite{IlyinEU} using the approach of
\cite{BV_MS} (see also~\cite{BF}). The corresponding weak $H^1$
trajectory attractors were constructed in~\cite{CVrj} and
in~\cite{Ch15} for the non-autonomous case of \eqref{DDE}.

A method for proving the strong attraction and compactness for the attractors
of equations in unbounded or non-smooth domains was proposed
and developed in \cite{B,Gh,MRW,Rosa}. The method essentially uses  the
energy equality for the corresponding equation for the proof of the asymptotic
compactness of the solution semigroup. In our case this method was applied
in~\cite{CVZ} for the proof of the existence of the strong $H^1$ trajectory
attractor for~\eqref{DDE} on the 2D torus. We point that
 the uniqueness of the solutions lying on the attractor
 for system~\eqref{DDE} with $\func{curl}g\in L^\infty$
was used in~\eqref{DDE} for the proof of the enstrophy equality.

In this work we obtain the energy (more precisely, enstrophy) equality in the Sobolev spaces
$W^{1,p}$, $2\le p<\infty$ without the assumption on $g$
that guarantees the uniqueness on the attractor. Instead we
use the fact that in the 2D case the vorticity satisfies the transport equation,
and the corresponding enstrophy equality directly follows from the results
of~\cite{DiP-Lio}.

In Section~\ref{Sec1} we give the definition of the weak solution
and prove that a solution exists and satisfies the energy equality.
However, the regularity   of it is insufficient for the
straightforward  proof of the enstrophy  equality.

In Section~\ref{Sec2} we show that the vorticity
$\omega=\func{curl}u$ satisfies in the weak sense the transport
equation
$$
\partial _{t}\omega +(u,\nabla )\omega +r\omega =\func{curl}g(x),
$$
where  $u\in L^\infty(0,T;H^1)$ is a weak solution of~\eqref{DDE}.
The results of~\cite{DiP-Lio} then immediately imply the required
enstrophy equality.

In Section~\ref{Sec3} we construct the weak trajectory attractor
for system~\eqref{DDE} and describe its structure.

In Section~\ref{Sec4} we recall the definition of the global
attractor for a~system with possibly non-unique solutions. Based on
the enstrophy equality obtained in Section~\ref{Sec2} we  prove the
existence of the global attractor that is compact in $H^1$, and
then show that the weak trajectory attractor constructed in
Section~\ref{Sec3} is, in fact, the strong trajectory attractor.

In Section~\ref{Sec5} we prove further regularity of the attractor
under the assumption that $\func{curl}g\in  L^p$, $p\ge2$. Namely,
we prove the compactness and attraction in the norm of the space
$H^1\cap\mathcal{E}_p$, where
$$
\mathcal{E}_p:=\{u,\ \func{div}u=0,\ \|u\|_{\mathcal{E}_p}:=
\|u\|_{L^2}+\|\func{curl}u\|_{L^p}<\infty\}.
$$

Finally, in Section~\ref{Sec6} we give a variant of the Yudovich's
proof \cite{Yud2,Yud3} of the uniqueness  if $\func{curl}g\in
L^\infty$ (see also~\cite{CVZ}).

\section{Weak solutions for damped Euler system in $\mathbb{R}^{2}$
\label{Sec1}}

We study the following 2D damped-driven Euler system:
\begin{equation}
\left\{
\begin{array}{l}
\partial _{t}u+(u,\nabla )u+\nabla p+ru=g(x), \\
\func{div}u:=\partial _{1}u^{1}+\partial _{2}u^{2}=0,\quad
x=(x_{1},x_{2})\in \mathbb{R}^{2},\quad t\geq 0.
\end{array}
\right.   \label{s1r1}
\end{equation}
 Here, $u=u(t,x)=(u^{1}(t,x),u^{2}(t,x))$ is the
unknown velocity vector field, $p=p(t,x)$ is the unknown pressure,
$g(x)=(g^{1}(x),g^{2}(x))$ is the given external force, $\nabla
=(\partial _{1},\partial _{2})$, and $\partial _{i}:=\partial
/\partial {x_{i}},\ i=1,2$. Next,
\begin{equation*}
(u,\nabla )v=u^{1}\partial _{1}v+u^{2}\partial _{2}v  \label{s1r2}
\end{equation*}
is the bilinear term, and $r>0$ is the given parameter that describes  the Ekman damping
term $ru$. Without
loss of generality  we shall assume that $\func{div}g=0$.

We study weak solutions of the system (\ref{s1r1}) with \emph{finite}
energy. We set
\begin{equation*}
\mathcal{H}:=\{v\in (L_{2}(\Omega ))^{2},\ \func{div}v=0\}.
\end{equation*}
The norm in the space $\mathcal{H}$ is denoted by
$\Vert \cdot\Vert $. We shall use  standard  notation for the Sobolev spaces
$W^{\ell ,p}=W^{\ell ,p}(\mathbb{R}^{2})$. As usual,
$H^{s}=W^{s,2}$. Along with the space $\mathcal{H}=\mathcal{H}^{0}$
we shall use the standard scale of spaces $\mathcal{H}^{s}\subset
(H^{s})^{2}$. In particular, $\mathcal{H}^{1}=$
$\{u\in (H^{1})^{2},\ \func{div}u=0\}$. We clearly have that
$\mathcal{H}^{-s}=(\mathcal{H}^{s})^{\ast }$ is the dual space to $\mathcal{H}^{s}$ for
$s>0$. The norm in $\mathcal{H}^{s}$ is denoted by $\| \cdot \|_{s}$.

In two (and three) dimensions we have for the vector Laplacian
\begin{equation}\label{invLapl}
\Delta u= (\Delta u^1,\Delta u^2)=\func{curl} \func{curl}u-\nabla\func{div}u,
\end{equation}
so that integrating by parts we obtain for $ u\in \mathcal{H}^{1}$
\begin{equation}
\Vert u\Vert _{1}^{2}=\|u\|^2+\|\nabla u\|^2=\Vert u\Vert ^{2}+\Vert \func{div}u\Vert ^{2}+\Vert
\func{curl}u\Vert ^{2}=\Vert u\Vert ^{2}+\Vert \func{curl}u\Vert ^{2}.
  \label{s1r3}
\end{equation}

 We shall also use the spaces $\mathcal{W}^{1 ,p}$, $1<p<\infty$, with norm
\begin{equation} \label{s1r4}
\|u\|_{\mathcal{W}^{1 ,p}}=\|u\|_{L^p}+\|\func{curl}u\|_{L^p}.
\end{equation}
Smooth divergence free vector functions
$v\in \left( C_{0}^{\infty }(\mathbb{R}^2\right) )^{2}$, $\func{div}v=0$
are dense in $\mathcal{W}^{1 ,p}$.

The functional formulation of the system~\eqref{s1r1} is similar to
that of the Navier--Stokes system \cite{TemNS}. The trilinear form
\begin{equation}  \label{s1r6}
b(u,v,w):=\int_{\mathbb{R}^{2}}((u,\nabla )v,w)dx
\end{equation}
is continuous on $\mathcal{H}^{1}\times\mathcal{H}^{1}\times\mathcal{H}^{1}$:
\begin{equation}
b(u,v,w)\leq C\Vert u\Vert _{1}\Vert v\Vert _{1}\Vert w\Vert _{1},
\label{s1r8}
\end{equation}
and for a fixed $u,v\in \mathcal{H}^{1}$  defines a bounded linear functional
$B(u,v)$ on $\mathcal{H}^{1}$:
$$
B(u,v):\mathcal{H}^{1}\to\mathcal{H}^{-1},\quad b(u,v,w)=\left\langle B(u,v),w\right\rangle
$$
with
\begin{equation}\label{s1r10}
\| B(u,v)\|_{-1}\leq C\|u\|_{1}\| v\|_{1}.
\end{equation}

In fact, we have more, namely, for any $\varepsilon>0$
\begin{equation}\label{s1r10eps}
\| B(u,v)\|_{-\varepsilon}\leq C_\varepsilon\|u\|_{1}\| v\|_{1},
\end{equation}
since by the H\"older inequality $u\nabla v\in L^{2-\varepsilon}$,
and by the Sobolev imbedding and duality $L^{2-\varepsilon}\supset
H^{-\varepsilon/(2-\varepsilon)}$.

We now define a weak solution of the system (\ref{s1r1}) in the space $%
\mathcal{H}^{1}.$  Let $g\in \mathcal{H}^{-1}.$

\begin{definition}
\label{s1def1}
{\rm A function $u(t,x )\in L^{\infty }(0,T;\mathcal{H}^{1})$ is
 a weak solution of (\ref{s1r1}) if for every $w\in \mathcal{H}^{1}$ the following equality
 holds in the sense of distributions $\mathcal{D}^{\prime }((0,T))$:
\begin{equation}
\frac{d}{dt}(u,w)+b(u,u,w)+r(u,w)=(g,w),  \label{s1r11}
\end{equation}%
or, equivalently, $u$ satisfies  the equation
\begin{equation}
\partial _{t}u+B(u,u)+ru=g  \label{s1r12}
\end{equation}%
in the space $\mathcal{D}^{\prime }((0,T);\mathcal{H}^{-1})$ of
distributions with values in $\mathcal{H}^{-1}.$ Both equalities
(\ref{s1r11}),  and (\ref{s1r12}) hold if and only if for every
smooth test function
$\varphi (t,x)\in C_{0}^{\infty}((0,T)\times\mathbb{R}^{2})$, $\func{div} \varphi=0$, the
following  integral identity holds
\begin{equation}
-\int_{0}^{T}(u,\partial _{t}\varphi )dt-
\int_{0}^{T}
\sum_{i=1}^{2}(u^{i}u,\partial _{i}\varphi)
dt
+r\int_{0}^{T}(u,\varphi )dt=\int_{0}^{T}(g,\varphi )dt.  \label{s1r13}
\end{equation}
}
\end{definition}

Let $u(t,x)\in L^{\infty }(0,T;\mathcal{H}^{1})$ be an arbitrary weak
solution of \eqref{s1r1}. Equation \eqref{s1r12} and inequality \eqref{s1r10} imply that
\begin{equation}\label{s1r13a}
\Vert \partial _{t}u\Vert _{-1}\leq \Vert B(u,u)\Vert _{-1}+r\Vert u\Vert
_{-1}+\Vert g\Vert _{-1}\leq C\Vert u\Vert _{1}^{2}+\Vert g\Vert _{-1}.
\end{equation}%
Therefore, $\partial _{t}u\in L^{\infty }(0,T;\mathcal{H}^{-1})$ and
$u\in C([0,T];\mathcal{H}^{-1})$.  Applying \cite[Lemma III.1.4]{TemNS}
we conclude that $u\in C_{w}([0,T];\mathcal{H}^{1})$.
Hence, the  initial condition is meaningful:
\begin{equation}
u|_{t=0}=u_{0}\in \mathcal{H}^{1}.  \label{s1r14}
\end{equation}

\begin{theorem} \label{s1the1}
Let $g\in \mathcal{H}^{1}$ and $u_{0}\in \mathcal{H}^{1}.$
Then the problem \eqref{s1r1}, \eqref{s1r14} has a weak solution
$u(t,x)\in L^{\infty }(0,T;\mathcal{H}^{1})$, which satisfies the following
estimate:
\begin{equation}
\Vert u(t)\Vert _{1}^{2}=\|u(t)\|^2+\|\nabla u(t)\|^2\leq \Vert u(0)\Vert _{1}^{2}e^{-rt}+r^{-2}\Vert
g\Vert _{1}^{2}.  \label{s1r15}
\end{equation}
\end{theorem}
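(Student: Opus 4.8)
The plan is to derive the a~priori bound \eqref{s1r15} first at a formal level, which fixes the correct functional setting, and then to realize it rigorously through the vanishing-viscosity approximation \eqref{DDNS}.

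\emph{Formal a~priori estimates.} Two identities drive everything. Testing \eqref{s1r12} with $u$ and using that $b(u,u,u)=0$ (which holds because $\func{div}u=0$, upon integrating the convective term by parts) gives the energy identity
\[
\tfrac12\frac{d}{dt}\|u\|^2+r\|u\|^2=(g,u).
\]
Next, applying $\func{curl}$ to the equation, the scalar vorticity $\omega=\func{curl}u$ satisfies $\partial_t\omega+(u,\nabla)\omega+r\omega=\func{curl}g$; testing this with $\omega$ and using $((u,\nabla)\omega,\omega)=0$, again by $\func{div}u=0$, produces the enstrophy identity
\[
\tfrac12\frac{d}{dt}\|\omega\|^2+r\|\omega\|^2=(\func{curl}g,\omega).
\]
Adding the two and invoking \eqref{s1r3}, namely $\|u\|_1^2=\|u\|^2+\|\omega\|^2$ and, likewise for $g$, $\|g\|_1^2=\|g\|^2+\|\func{curl}g\|^2$, I bound the right-hand side by Cauchy--Schwarz together with Young's inequality, $(g,u)+(\func{curl}g,\omega)\le \tfrac r2\|u\|_1^2+\tfrac1{2r}\|g\|_1^2$, which absorbs half of the dissipation and yields
\[
\frac{d}{dt}\|u\|_1^2+r\|u\|_1^2\le \tfrac1r\|g\|_1^2 .
\]
The Gronwall lemma then gives exactly the bound \eqref{s1r15}.

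\emph{Approximation and uniform bounds.} To make this rigorous I would fix $\nu>0$ and solve the damped Navier--Stokes system \eqref{DDNS} with datum $u_0\in\mathcal{H}^1$; for $\nu>0$ this problem has a unique, sufficiently regular solution $u_\nu$ for which both identities above are legitimate, the additional viscous contributions $\nu\|\nabla u_\nu\|^2$ and $\nu\|\nabla\omega_\nu\|^2$ appearing with a favorable sign on the left. Consequently \eqref{s1r15} holds for $u_\nu$ \emph{uniformly in} $\nu$, so $\{u_\nu\}$ is bounded in $L^\infty(0,T;\mathcal{H}^1)$; and by the argument leading to \eqref{s1r13a} the time derivatives $\{\partial_t u_\nu\}$ are bounded in $L^\infty(0,T;\mathcal{H}^{-1})$. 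I then extract a subsequence converging weakly-$*$ in $L^\infty(0,T;\mathcal{H}^1)$ to a limit $u$, and weak lower semicontinuity of the $\mathcal{H}^1$ norm transfers \eqref{s1r15} to $u$.

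\emph{Main obstacle.} The delicate point is the nonlinear term $\sum_i(u_\nu^i u_\nu,\partial_i\varphi)$ in \eqref{s1r13}, whose passage to the limit requires strong convergence of $u_\nu$; since on the whole plane the embedding $\mathcal{H}^1\hookrightarrow\mathcal{H}$ is not compact, global compactness is unavailable. I would circumvent this locally: on each ball $B_R$ the bounds on $u_\nu$ in $L^\infty(0,T;H^1(B_R))$ and on $\partial_t u_\nu$ in $L^\infty(0,T;H^{-1}(B_R))$ yield, via the Aubin--Lions--Simon lemma, strong convergence $u_\nu\to u$ in $L^2(0,T;L^2(B_R))$. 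Because every admissible test function $\varphi$ in \eqref{s1r13} has compact support in $x$, this local strong convergence suffices to pass to the limit in the quadratic term, while the linear terms pass by weak convergence alone. A standard exhaustion of $\mathbb{R}^2$ by balls together with a diagonal subsequence then shows that $u$ is a weak solution satisfying \eqref{s1r15}, completing the proof.
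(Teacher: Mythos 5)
Your proposal is correct and follows essentially the same route as the paper: the $L^2$ energy estimate plus the enstrophy estimate (your testing of the vorticity equation with $\omega$ is literally the same computation as the paper's orthogonality $((u,\nabla u),\Delta u)=(\omega u,\nabla\omega)=0$ used when testing with $\Delta u$), combined and closed by Gronwall, with rigor supplied by the vanishing-viscosity approximation that the paper itself cites as the method. The only difference is that you spell out the limit passage (local Aubin--Lions--Simon compactness on balls plus compactly supported test functions), which the paper dismisses as standard here but carries out in the same way later in Proposition~\ref{s3pro1}.
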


\begin{proof}
The proof is standard and uses the vanishing viscosity
method (alternatively, one can use the Galerkin method with respect to
expanding family of bounded domains) (see \cite{BF, CVrj, CVZ}, see also \cite{Lio, Lad, Tem, TemNS, CoFo}).
Therefore we restrict ourselves to discussing a priori estimates for
weak solutions of the problem (\ref{s1r1}) and (\ref{s1r14}).

The $L^2$-norm estimate contained in  \eqref{s1r15} is obtained by taking the scalar product
of \eqref{s1r1} with $u$ and using the orthogonality relation
\begin{equation}\label{L2orth}
((u,\nabla v),v)=-\frac{1}{2}\int_{\mathbb{R}^2}|v|^2\func{div}u\,dx=0,
\end{equation}
which gives
\begin{equation}
\Vert u(t)\Vert ^{2}\leq \Vert u(0)\Vert ^{2}e^{-rt}+r^{-2}\Vert g\Vert ^{2}.
\label{s1r17}
\end{equation}
Next, we  estimate the $H^1$-norm and
 use the orthogonality relation $((u,\nabla u),\Delta u)=0$ 
specific to the two-dimensional case \cite{Tem}. In fact,
using the invariant expressions for the Laplacian~\eqref{invLapl} and
for the advection term 
\begin{equation}\label{s2r3}
(u,\nabla )u= \func{curl}u\times u+\frac{1}{2}\nabla |u|^{2},
\end{equation}
and setting
\begin{equation}\label{formulas}
\omega:=\func{curl}u=\partial_1u^2- \partial_2u^1,
\ u^\perp:=(-u^2,u^1),
\ \nabla^\perp:=(-\partial_2,\partial_1),
\end{equation}
we find that
\begin{equation}\label{H1orth}
\aligned
((u,\nabla u),\Delta u)=(\func{curl}u\times u,\func{curl}\func{curl}u)=
(\omega u^\perp,\nabla^\perp\omega)=\\=
(\omega u,\nabla\omega)=\frac12\int_{\mathbb{R}^2}u\nabla \omega^2dx=0.
\endaligned
\end{equation}
We take the scalar product of (\ref{s1r1}) with $\Delta u$ and
integrating by parts and using~\eqref{H1orth} we obtain
\begin{equation}
\Vert \nabla u(t)\Vert ^{2}\leq \Vert \nabla u(0)\Vert
^{2}e^{-rt}+r^{-2}\Vert \nabla g\Vert ^{2}.  \label{s1r20}
\end{equation}%
Summing \eqref{s1r17} and \eqref{s1r20} we finally have \eqref{s1r15}.

Finally, we observe that
since  the orthogonality relations \eqref{L2orth} and \eqref{H1orth}  are `bilinear', the formal
 argument above can be justified by  Galerkin
approximation solutions. Alternatively, we can consider more regular solutions of the corresponding
Navier--Stokes perturbation of this  system.
\end{proof}

\begin{corollary}\label{s1cor1}
Let $u(t,x)\in L^{\infty }(0,T;\mathcal{H}^{1})$ be an
arbitrary weak solution of \eqref{s1r1}. Then the function
$\Vert u(t)\Vert^{2}$ is absolutely continuous and the following energy
equality holds%
\begin{equation}\label{s1r21}
\frac{1}{2}\frac{d}{dt}\Vert u(t)\Vert ^{2}+r\Vert u(t)\Vert ^{2}=(g,u(t)).
\end{equation}
\end{corollary}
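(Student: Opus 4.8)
The plan is to obtain the energy equality by pairing the functional equation \eqref{s1r12} with the solution itself and invoking the standard Lions--Magenes duality lemma. The regularity required is already available for an \emph{arbitrary} weak solution: by hypothesis $u\in L^{\infty}(0,T;\mathcal{H}^{1})$, while estimate \eqref{s1r13a} gives $\partial_{t}u\in L^{\infty}(0,T;\mathcal{H}^{-1})$. With respect to the Gelfand triple $\mathcal{H}^{1}\subset\mathcal{H}\subset\mathcal{H}^{-1}$ (continuous and dense embeddings, the $\mathcal{H}^{-1}$--$\mathcal{H}^{1}$ pairing extending the scalar product of $\mathcal{H}$), and since the interval $(0,T)$ is bounded, we have in particular $u\in L^{2}(0,T;\mathcal{H}^{1})$ and $\partial_{t}u\in L^{2}(0,T;\mathcal{H}^{-1})$. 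Note that the lemma does not require the embedding $\mathcal{H}^{1}\hookrightarrow\mathcal{H}$ to be compact, which is important since in $\mathbb{R}^{2}$ it is not.

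First I would apply \cite[Lemma III.1.2]{TemNS}, which under exactly these hypotheses guarantees that $t\mapsto\Vert u(t)\Vert^{2}$ is absolutely continuous on $[0,T]$ and that
$$
\frac{1}{2}\frac{d}{dt}\Vert u(t)\Vert^{2}=\langle\partial_{t}u(t),u(t)\rangle
$$
for almost every $t$, the pairing being that of $\mathcal{H}^{-1}$ and $\mathcal{H}^{1}$. Next I would pair \eqref{s1r12}, which holds in $\mathcal{H}^{-1}$ for a.e. $t$, with $u(t)\in\mathcal{H}^{1}$, obtaining
$$
\langle\partial_{t}u,u\rangle+\langle B(u,u),u\rangle+r\Vert u\Vert^{2}=(g,u),
$$
where $(g,u)$ is the scalar product in $\mathcal{H}$ because $g\in\mathcal{H}^{1}\subset\mathcal{H}$. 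Substituting the first identity into the second yields the energy equality \eqref{s1r21}, provided the cubic term vanishes.

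The one point that requires care --- and essentially the only genuine obstacle --- is the identity $\langle B(u,u),u\rangle=b(u,u,u)=0$ for a solution merely of class $\mathcal{H}^{1}$ in space rather than smooth. By the definition of $B$ we have $\langle B(u,u),u\rangle=b(u,u,u)$, and the orthogonality relation \eqref{L2orth} gives $b(v,v,v)=-\tfrac12\int_{\mathbb{R}^{2}}|v|^{2}\func{div}v\,dx=0$ for smooth divergence-free $v$. To pass to arbitrary $u\in\mathcal{H}^{1}$ I would use the continuity estimate \eqref{s1r8}, which shows that the diagonal map $v\mapsto b(v,v,v)$ is continuous on $\mathcal{H}^{1}$, together with the density of smooth compactly supported divergence-free fields in $\mathcal{H}^{1}$; the relation $b(u,u,u)=0$ then extends by continuity. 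Combining this cancellation with the two displayed identities produces \eqref{s1r21}, and the absolute continuity asserted in the statement is precisely the output of the Lions--Magenes lemma. I expect the whole argument to be routine once the $\mathcal{H}^{1}$ regularity is exploited, in sharp contrast with the enstrophy equality, whose proof genuinely needs the transport structure of the vorticity.
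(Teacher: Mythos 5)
Your proof is correct and takes essentially the same route as the paper: the same regularity observations ($u\in L^{2}(0,T;\mathcal{H}^{1})$, $\partial_{t}u\in L^{2}(0,T;\mathcal{H}^{-1})$ from \eqref{s1r13a}), pairing equation \eqref{s1r12} with $u(t)$, and invoking \cite[Lemma~III.1.2]{TemNS} to get the absolute continuity of $\Vert u(t)\Vert^{2}$ and the identity $\langle\partial_{t}u,u\rangle=\tfrac12\tfrac{d}{dt}\Vert u\Vert^{2}$. The only difference is that you make explicit the density-and-continuity justification of $b(u,u,u)=0$ for $u$ merely in $\mathcal{H}^{1}$, a step the paper leaves implicit when it drops the nonlinear term via \eqref{L2orth}.
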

\begin{proof}
The established regularity
$u(t,x)\in L^{\infty }(0,T;\mathcal{H}^{1})\subset L^{2 }(0,T;\mathcal{H}^{1})$
of the weak solution and
\eqref{s1r13a} giving
$\partial_t u(t,x)\in L^{\infty }(0,T;\mathcal{H}^{-1})
\subset L^{2 }(0,T;\mathcal{H}^{-1})$
show that we can take the scalar product of
\eqref{s1r1} with $u$ to obtain
\begin{equation}
(\partial _{t}u,u)+r(u,u)=(g,u).  \label{s1r15a}
\end{equation}%
It now follows from \cite[Lemma~III.1.2]{TemNS} that%
\begin{equation*}
(\partial _{t}u,u)=\frac{1}{2}\frac{d}{dt}\Vert u\Vert ^{2}\ \text{for a.e. }%
t\in \lbrack 0,T]
\end{equation*}%
and the function $\Vert u(t)\Vert ^{2}$ is absolutely continuous. Therefore we
 obtain from
(\ref{s1r15a})  the  \emph{energy} equality \eqref{s1r21}.
\end{proof}

\begin{remark}\label{R:1}
{\rm
If we assume additional regularity of the solution $u$,
namely, $u(t,x)\in L^{2}(0,T;\mathcal{H}^{2})$,
then arguing as in Corollary~\ref{s1cor1} one can show that
the following \emph{enstrophy} equality holds
\begin{equation}\label{s1r19}
\frac{1}{2}\frac{d}{dt}\|\nabla u(t)\|^{2}+r\|\nabla u(t)\|^{2}=(\nabla g,\nabla u(t)).
\end{equation}
Summing (\ref{s1r21}) and (\ref{s1r19}) and taking into account that
$\|\nabla u\|^2=\|\func{curl}u\|^2$,  we obtain the equality
for the full $H^1$-norm
\begin{equation}\label{s1r22}
\aligned
\frac{1}{2}\frac{d}{dt}\bigl(\| u(t)\|^{2}+&\|\func{curl}u\|^2\bigr)+
r\left(\| u(t)\|^{2}+\|\func{curl}u\|^2\right)=\\=
&(u(t),g)+(\func{curl}u(t),\func{curl}g)\equiv (u(t),g)_{1}.
\endaligned
\end{equation}

In the next section we establish the enstrophy equality \eqref{s1r19}
and, hence, equality~\eqref{s1r22}   for an \emph{arbitrary} weak
solution $u\in L^{\infty}(0,T;\mathcal{H}^{1})$
of the problem (\ref{s1r1}), (\ref{s1r14})
(not necessarily constructed by, say, the Galerkin method). This will be the key
tool  in the construction of the strong trajectory
attractor for the damped-driven 2D Euler system.
}
\end{remark}

\begin{remark}\label{s1rem2}
{\rm
The uniqueness of a weak solution of the problem (\ref{s1r1})
and (\ref{s1r14}) for $g\in \mathcal{H}^{1}$ and $u_{0}\in \mathcal{H}^{1}$
in the space $L^{\infty }(0,T;\mathcal{H}^{1})$ is not proved. The situation
is the same as in the case of the  classical conservative 2D Euler system with $r=0$.
Therefore our goal is  to construct strong global and trajectory attractors for
the system~(\ref{s1r1}) with possibly non-unique solutions.
}
\end{remark}

\section{Vorticity equation and enstrophy equality \label{Sec2}}

Let $u(t,x)\in L^{\infty }(0,T;\mathcal{H}^{1})$ be an arbitrary
weak solution of  system (\ref{s1r1}). We consider the following
(scalar) function called the \emph{vorticity}
\begin{equation*}
\omega (t,x):=\func{curl}u(t,x)=\partial _{1}u^{2}(t,x)-\partial
_{2}u^{1}(t,x).
\end{equation*}
We clearly have $\omega \in L^{\infty }(0,T;H)$. We now derive the
equation for $\omega (t,x)$.
Using \eqref{s2r3},
where $\func{curl}u\times u=\omega u^\perp$, see~\eqref{formulas},
we rewrite the system \eqref{s1r1} in the equivalent form
\begin{equation}
\left\{
\begin{array}{l}
\partial _{t}u^{1}-\omega u^{2}+\partial _{1}p+\dfrac{1}{2}\partial
_{1}|u|^{2}+ru^{1}=g^{1}, \\
\partial _{t}u^{2}+\omega u^{1}+\partial _{2}p+\dfrac{1}{2}\partial
_{2}|u|^{2}+ru^{2}=g^{2}, \\
\func{div}u=0.
\end{array}
\right.   \label{s2r4}
\end{equation}
Taking the (distributional) derivative $\partial _{2}$ of the first equation
of the system \eqref{s2r4} and $\partial _{1}$ of the second equation, we
obtain
\begin{equation*}
\left\{
\begin{array}{l}
\partial _{t}\partial _{2}u^{1}-\partial _{2}(\omega u^{2})+\partial
_{2}\partial _{1}p+\dfrac{1}{2}\partial _{2}\partial
_{1}|u|^{2}+r\partial _{2}u^{1}=\partial _{2}g^{1}, \\
\partial _{t}\partial _{1}u^{2}+\partial _{1}(\omega u^{1})+\partial
_{1}\partial _{2}p+\dfrac{1}{2}\partial _{1}\partial
_{2}|u|^{2}+r\partial _{1}u^{2}=\partial _{1}g^{2}.%
\end{array}%
\right.
\end{equation*}%
We now subtract the first equation from the second  and obtain%
\begin{equation*}
\partial _{t}\left( \partial _{1}u^{2}-\partial _{2}u^{1}\right) +\partial
_{1}(\omega u^{1})+\partial _{2}(\omega u^{2})+r\left( \partial
_{1}u^{2}-\partial _{2}u^{1}\right) =\partial _{1}g^{2}-\partial
_{2}g^{1}.
\end{equation*}%
Here we have used the commutation property $\partial _{2}\partial
_{1}=\partial _{1}\partial _{2}$ for the distributional
derivatives. The resulting equation for
the vorticity $\omega $ reads%
\begin{equation}\label{s2r6}
\partial _{t}\omega +\sum_{j=1}^{2}\partial _{j}(\omega u^{j})+r\omega =%
\func{curl}g(x).
\end{equation}%
This equation holds in the  sense of distributions, that is, for every smooth test
function $\varphi(t,x) \in C_{0}^{\infty }((0,T)\times\mathbb{R}^{2})$
 the following integral identity holds:
\begin{equation}\label{s2r7}
\aligned
-\int_{0}^{T}\omega \partial _{t}\varphi dt-&\int_{0}^{T}
\int_{\mathbb{R}^{2}}\omega \sum_{j=1}^{2}u^{j}\partial _{j}\varphi dxdt+\\
&+r\int_{0}^{T}\int_{\mathbb{R}^{2}}\omega \varphi dxdt=
\int_{0}^{T}\int_{\mathbb{R}^{2}}\varphi\func{curl}g dxdt.
\endaligned
\end{equation}
It is well known that if $f_{1},f_{2}\in H^{1},$ then $\partial
_{i}(f_{1}f_{2})=(\partial _{i}f_{1})f_{2}+f_{1}(\partial _{i}f_{2})\in
L_{loc}^{1}$ is the distributional derivative  $\partial _{i}$ of the
product $f_{1}f_{2}$. Recall that $u\in L^{\infty }(0,T;\mathcal{H}^{1})$ and 
$\func{div}u=0$. Therefore we have
\begin{equation*}
\sum_{j=1}^{2}\partial _{j}\left( u^{j}\varphi \right) = \varphi\func{div} u +\sum_{j=1}^{2}u^{j}\partial
_{j}\varphi =\sum_{j=1}^{2}u^{j}\partial _{j}\varphi .
\end{equation*}
Consequently, we can rewrite identity \eqref{s2r7} as follows
\begin{equation}\label{weakomega}
\aligned
-\int_{0}^{T}\omega \partial _{t}\varphi dt-&\int_{0}^{T}
\int_{\mathbb{R}^{2}}\omega \sum_{j=1}^{2}\partial _{j}\left( u^{j}\varphi \right)
dxdt+\\+&r\int_{0}^{T}\int_{\mathbb{R}^{2}}\omega \varphi dxdt=
\int_{0}^{T}\int_{\mathbb{R}^{2}}\varphi\func{curl}g\varphi dxdt.
\endaligned
\end{equation}%

We have proved the following result.

\begin{proposition}
\label{s2pro1} Let $g\in \mathcal{H}^{1}$. If
$u(t,x)\in L^{\infty}(0,T;\mathcal{H}^{1})$ is a weak solution of \eqref{s1r1}, then the
corresponding vorticity function $\omega =\func{curl}u$ is the
solution of the equation
\begin{equation}
\partial _{t}\omega +(u,\nabla )\omega +r\omega =\func{curl}g(x).
\label{s2r8}
\end{equation}%
 in the sense of the integral identity~\eqref{weakomega}.
\end{proposition}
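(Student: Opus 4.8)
The plan is to derive the vorticity equation \eqref{s2r8} by taking the curl of the momentum equation \eqref{s1r1} at the level of distributions, and the whole task reduces to justifying that each operation is legitimate given only the regularity $u\in L^\infty(0,T;\mathcal H^1)$. The excerpt already carries out the formal computation: rewriting the advection term via \eqref{s2r3}, differentiating the first component equation by $\partial_2$ and the second by $\partial_1$, and subtracting. The pressure and the gradient term $\frac12\nabla|u|^2$ drop out because mixed second derivatives commute, $\partial_2\partial_1 = \partial_1\partial_2$, an identity that holds for all distributions; this is the one genuinely nontrivial cancellation and it is already flagged in the text. So my proof is essentially a matter of assembling these pieces and checking that the resulting identity \eqref{weakomega} makes sense as a pairing against test functions.

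First I would record the regularity inputs that make every term in \eqref{s2r7} a well-defined (indeed finite) integral. Since $u\in L^\infty(0,T;\mathcal H^1)$, we have $\omega=\func{curl}u\in L^\infty(0,T;H)$, and the Ladyzhenskaya inequality (or the Sobolev embedding $H^1\hookrightarrow L^4$ in $\mathbb R^2$) gives $u^j\in L^\infty(0,T;L^4)$, so the quadratic flux $\omega u^j$ lies in $L^\infty(0,T;L^{4/3})\subset L^\infty(0,T;L^1_{loc})$. Against a test function $\varphi\in C_0^\infty((0,T)\times\mathbb R^2)$ and its spatial derivatives, which are bounded with compact support, each of the three integrals $\int\omega\partial_t\varphi$, $\int\omega u^j\partial_j\varphi$, and $r\int\omega\varphi$ converges absolutely, and the right-hand side is controlled by $\func{curl}g\in L^2$ since $g\in\mathcal H^1$.

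Next I would justify passing from the weak momentum identity to the weak vorticity identity directly, rather than differentiating distributions termwise and worrying about the pressure. The clean route is: start from the definition \eqref{s1r13} of a weak solution, and test it against $\psi:=\nabla^\perp\varphi=(-\partial_2\varphi,\partial_1\varphi)$ for scalar $\varphi\in C_0^\infty$. This $\psi$ is smooth, compactly supported, and divergence free, so it is an admissible test vector field in \eqref{s1r13}; moreover the pressure pairs to zero against any divergence-free field, so no pressure term ever appears. Writing out \eqref{s1r13} with $\varphi$ replaced by $\nabla^\perp\varphi$ and using $\sum_i(u^i u,\partial_i\nabla^\perp\varphi)=(\omega u,\nabla\varphi)$ (this is the integrated form of the cancellation in \eqref{H1orth}, valid for $u\in\mathcal H^1$) reproduces exactly \eqref{weakomega}. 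This is the approach I would actually write, because it sidesteps the delicate question of whether the distributional derivatives of the individual terms are separately integrable functions.

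\emph{The main obstacle} is the step where the advection term is rewritten: one must verify that for $u\in\mathcal H^1$ the distributional identity $\sum_j\partial_j(\omega u^j)=(u,\nabla)\omega$ holds in the weak sense used in the statement, i.e.\ that $\int\omega u^j\partial_j\varphi\,dx$ equals the pairing corresponding to $(u,\nabla)\omega$. Since $\omega\in L^2$ has no classical derivative, $(u,\nabla)\omega$ exists only as a distribution, and the equality rests on $\func{div}u=0$ together with the product rule for $u^j\varphi\in H^1$ with compact support; the excerpt's manipulation $\sum_j\partial_j(u^j\varphi)=\varphi\func{div}u+\sum_j u^j\partial_j\varphi=\sum_j u^j\partial_j\varphi$ is precisely this justification, and I would present it carefully as the crux. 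Once that is in place, \eqref{weakomega} is the assertion of the proposition and the proof is complete.
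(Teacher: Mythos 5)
Your route is sound, and it is at bottom the dual form of the paper's argument: the paper rewrites the advection term via \eqref{s2r3}, takes $\partial_2$ of the first component equation and $\partial_1$ of the second in the system \eqref{s2r4}, and subtracts, so that the pressure and $\frac12\nabla|u|^2$ terms die by commutation of mixed distributional derivatives; testing the weak formulation \eqref{s1r13} against $\nabla^\perp\varphi$ is literally the same operation, since $\langle \partial_1F^2-\partial_2F^1,\varphi\rangle=-\langle F,\nabla^\perp\varphi\rangle$ for any distribution $F$. What your presentation buys is that the pressure never has to be introduced at all: the paper's system \eqref{s2r4} contains $p$, which for a weak solution in the sense of Definition~\ref{s1def1} is not given and would in principle have to be recovered by a de Rham-type argument, whereas \eqref{s1r13} is pressure-free from the start. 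Both proofs share the remaining step (the paper's ``crux'' that you correctly single out): converting $\sum_j\partial_j(\omega u^j)$ into the pairing used in \eqref{weakomega} by means of $\func{div}u=0$ and the product rule for $H^1$ functions.

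Two points in your write-up need repair. First, your key identity has the wrong sign: for divergence-free $u\in\mathcal H^1$ one has
\begin{equation*}
\sum_{i=1}^{2}\bigl(u^{i}u,\partial_i\nabla^\perp\varphi\bigr)=-\,(\omega u,\nabla\varphi),
\end{equation*}
not $+(\omega u,\nabla\varphi)$. Indeed, $\sum_i\partial_i(u^iu)=(u,\nabla)u=\omega u^\perp+\frac12\nabla|u|^2$ by \eqref{s2r3}; pairing with $\nabla^\perp\varphi$ annihilates the gradient part because $\func{div}\nabla^\perp\varphi=0$, while $\omega u^\perp\cdot\nabla^\perp\varphi=\omega u\cdot\nabla\varphi$, and undoing the integration by parts produces the minus sign. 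With your sign the substitution into \eqref{s1r13} yields \eqref{weakomega} with the advection term flipped, i.e.\ the wrong equation; with the correct sign (equivalently, testing with $-\nabla^\perp\varphi$) you obtain exactly \eqref{s2r7} and hence \eqref{weakomega}. Second, invoking \eqref{H1orth} is not the right justification: \eqref{H1orth} is the orthogonality $((u,\nabla u),\Delta u)=0$, a statement about pairing with $\Delta u$, established there only by a formal computation for regular $u$. The identity you actually need should be proved from \eqref{s2r3} for smooth divergence-free fields and then extended to $u\in\mathcal H^1$ by density; this extension is legitimate because both sides are continuous in $u$ for the $\mathcal H^1$ topology (the left-hand side through $u^iu^j\in L^2$, the right-hand side through $\omega u\in L^{4/3}$, using $H^1(\mathbb R^2)\subset L^4(\mathbb R^2)$). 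With these two corrections your argument is complete and proves the proposition.
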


We note that in equation \eqref{s2r6} we have 
$u\in L^{\infty}(0,T;\mathcal{H}^{1})$ so that 
$\omega \in L^{\infty}(0,T;L^{2})$. Using the embedding 
$H^{1}(\mathbb{R}^2)\subset L^{q}(\mathbb{R}^2)$ for any $q\geq 2$,
 we observe that the product
$\omega \cdot u^{j}$ belongs to 
$L^{\infty }(0,T;L^{2-\varepsilon})$ for any $\varepsilon >0$. Therefore, 
$\sum_{j=1}^{2}\partial _{j}(\omega u^{j})\in L^{\infty }(0,T;W^{-1,2-\varepsilon })$ and
the equation (\ref{s2r6}) holds the space of distributions\
$\mathcal{D}^{\prime }((0,T);W^{-1,2-\varepsilon })$. We also note
that  since $\func{div}u=0$, the weak  formulations of \eqref{s2r6}
and \eqref{s2r8}, namely, \eqref{s2r7} and \eqref{weakomega},
respectively, are equivalent.

We now formulate the main theorem of this section.

\begin{theorem} \label{s2the1}
Let  $\omega (t,x)\in L^{\infty }(0,T;L^{2})$ be
a  solution of the linear transport equation \eqref{s2r8} in the sense of
\eqref{s2r7} (or \eqref{weakomega}), where
$u(t,x)\in L^{\infty }(0,T;\mathcal{H}^{1})$ is an arbitrary function
(not necessarily a weak solution of \eqref{s1r1}). Then the function $\|\omega (t)\|^{2}$
is absolutely continuous and satisfies the differential equation
\begin{equation}\label{s2r9}
\frac{1}{2}\frac{d}{dt}\Vert \omega (t)\Vert ^{2}+r\Vert \omega (t)\Vert
^{2}=(\omega (t),\func{curl}g)\ \ \text{\rm a.\,e. on}\ (0,T).
\end{equation}
\end{theorem}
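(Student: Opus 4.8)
The plan is to read \eqref{s2r8} as a linear transport equation for $\omega$ with divergence-free drift $u\in L^\infty(0,T;\mathcal H^1)$ and source $f:=\func{curl}g-r\omega$, and to obtain \eqref{s2r9} by the renormalization (regularization/commutator) method of DiPerna and Lions. First I would record the structural facts that make this theory applicable: the drift satisfies $u\in L^1(0,T;W^{1,2}_{\mathrm{loc}})$ with $\func{div}u=0\in L^\infty$; the source obeys $f\in L^\infty(0,T;L^2)$, since $\func{curl}g\in L^2$ (as $g\in\mathcal H^1$) and $\omega\in L^\infty(0,T;L^2)$; and the growth condition at infinity holds because $u\in L^2\subset L^1+L^\infty$ uniformly in $t$, so that $u/(1+|x|)\in L^1(0,T;L^1+L^\infty)$. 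I would also note, writing $\partial_t\omega=\func{curl}g-r\omega-\func{div}(u\omega)$ and using $\func{div}(u\omega)\in L^\infty(0,T;W^{-1,2-\ep})$ as already observed after \eqref{s2r8}, that $\omega\in C_w([0,T];L^2)$, so that $\|\omega(t)\|$ is well defined for \emph{every} $t$.

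Next I would mollify in space, $\omega_\ep:=\omega*\rho_\ep$, using $\func{div}u=0$ to write the convective term as $\func{div}(u\omega)$. Convolving \eqref{s2r8} gives
\begin{equation*}
\partial_t\omega_\ep+\func{div}(u\omega_\ep)+r\omega_\ep=(\func{curl}g)_\ep+R_\ep,\qquad R_\ep:=\func{div}(u\omega_\ep)-\bigl(\func{div}(u\omega)\bigr)_\ep .
\end{equation*}
At the pairing $u\in W^{1,2}_{\mathrm{loc}}$, $\omega\in L^2_{\mathrm{loc}}$ the DiPerna--Lions commutator lemma yields $R_\ep\to0$ in $L^1_{\mathrm{loc}}((0,T)\times\mathbb R^2)$. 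I would first use this for \emph{bounded} renormalizors $\beta\in C^1$ with $\beta'$ bounded: then $\beta'(\omega_\ep)R_\ep\to0$ in $L^1_{\mathrm{loc}}$, and passing to the limit in the classical chain rule for the smooth $\omega_\ep$ gives $\partial_t\beta(\omega)+\func{div}(u\beta(\omega))+r\omega\beta'(\omega)=\beta'(\omega)\func{curl}g$ in $\mathcal D'$. Approximating $s\mapsto s^2$ from below by such $\beta$ and using $\omega\in L^\infty(0,T;L^2)$ (so $\omega^2\in L^1$ and $\omega\,\func{curl}g\in L^1$) to pass to the limit by dominated convergence, I obtain the renormalized identity
\begin{equation*}
\partial_t\omega^2+\func{div}(u\omega^2)+2r\omega^2=2\,\omega\,\func{curl}g \qquad\text{in }\mathcal D'\bigl((0,T)\times\mathbb R^2\bigr).
\end{equation*}

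The last step is the spatial integration. Testing the renormalized identity against a cutoff $\chi_\rho$ and sending $\rho\to\infty$ reduces everything to showing that the flux term $\int\func{div}(u\omega^2)\,dx$ vanishes; I expect this to be the main obstacle, because with only $\omega\in L^2$ the product $u\omega^2$ need not be globally integrable and no crude H\"older splitting controls the annular boundary contribution. This is precisely where the DiPerna--Lions theory is essential: since $\func{div}u=0$ and the growth condition holds, $u$ admits a unique measure-preserving regular Lagrangian flow $X_t$, along which the transport equation integrates to
\begin{equation*}
\omega(t,X_t(x))=e^{-rt}\omega_0(x)+\int_0^t e^{-r(t-\sigma)}\func{curl}g\bigl(X_\sigma(x)\bigr)\,d\sigma .
\end{equation*}
Because $X_t$ preserves Lebesgue measure, changing variables $y=X_t(x)$ turns this pointwise Lagrangian identity into a global enstrophy balance with \emph{no} flux term to control; differentiating the resulting relation (or, equivalently, integrating the renormalized identity in time from $s$ to $\tau$ and letting the cutoff go to infinity) gives
\begin{equation*}
\tfrac12\|\omega(\tau)\|^2-\tfrac12\|\omega(s)\|^2+r\int_s^\tau\|\omega\|^2\,dt=\int_s^\tau(\omega,\func{curl}g)\,dt .
\end{equation*}
Since the right-hand side and $\int_s^\tau\|\omega\|^2$ are absolutely continuous in $\tau$ (their integrands lie in $L^1(0,T)$ as $\omega\in L^\infty(0,T;L^2)$ and $\func{curl}g\in L^2$), the function $\|\omega(t)\|^2$ is absolutely continuous and differentiation yields \eqref{s2r9} for a.e.\ $t\in(0,T)$. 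The crux throughout is the borderline integrability $H^1\times L^2$: it is admissible for the commutator lemma, but the passage from the local renormalized identity to the global balance genuinely relies on the measure-preservation of the DiPerna--Lions flow rather than on any elementary tail estimate.
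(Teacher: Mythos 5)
Your toolbox is the right one --- the paper's own proof of Theorem \ref{s2the1} is nothing more than a citation of the proof of Theorem II.2 and equation (26) in \cite{DiP-Lio}, adjusted for the forcing term --- and your verification of the hypotheses (commutator lemma at the borderline $W^{1,2}_{\mathrm{loc}}\times L^2_{\mathrm{loc}}$ pairing, $\func{div}u=0$, the growth condition $u/(1+|x|)\in L^1+L^\infty$, weak $L^2$-continuity in time) is correct. But there is a genuine flaw in the middle of your argument: the passage from the bounded-$\beta$ renormalized identity to the identity $\partial_t\omega^2+\func{div}(u\omega^2)+2r\omega^2=2\,\omega\,\func{curl}g$ in $\mathcal D'((0,T)\times\mathbb R^2)$ is not justified and, at this regularity, not even well formulated. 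With $u\in H^1(\mathbb R^2)$ (hence in every $L^q_{\mathrm{loc}}$, $q<\infty$, but not in $L^\infty_{\mathrm{loc}}$) and $\omega^2$ merely in $L^1$, the product $u\omega^2$ need not lie in $L^1_{\mathrm{loc}}$ (take $u\sim(\log(1/|x|))^{1/2-\ep}$ and $\omega^2\sim|x|^{-2}(\log(1/|x|))^{-1-\delta}$ near a point, with $\ep+\delta<1/2$), so $\func{div}(u\omega^2)$ is not a distribution; dominated convergence handles every term of the bounded-$\beta$ identity \emph{except} the flux term, which is exactly the one you need. The correct order of operations --- and what DiPerna--Lions actually do to prove their equation (26) --- is the reverse of yours: keep $\beta$ bounded, test the identity for $\beta(\omega)$ with the cutoff $\chi_\rho$ \emph{first}, and note that for bounded $\beta$ the tail term is harmless, because $\beta(\omega)\in L^\infty(0,T;L^1\cap L^\infty)$, $|\nabla\chi_\rho|\le C/(1+|x|)$ on its support, and $u/(1+|x|)\in L^1+L^\infty$ make $\int u\,\beta(\omega)\cdot\nabla\chi_\rho\,dx\to0$ by dominated convergence. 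Only then does one let $\beta(s)\uparrow s^2$, in the spatially \emph{integrated} identity, where no flux term survives and $\omega^2,\ \omega\,\func{curl}g\in L^1$ suffice. In particular your closing diagnosis --- that no elementary tail estimate can work and that measure preservation of the Lagrangian flow is ``genuinely'' needed --- is wrong: the elementary estimate works, provided it is performed before unbounding the renormalizer.

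Your Lagrangian bypass can be made rigorous, but it is both heavier and logically downstream of what it tries to avoid. To identify the given weak solution $\omega$ with the Duhamel formula along the flow $X_t$ you must know that $L^\infty(0,T;L^2)$ solutions of the transport equation with this drift are unique --- that is DiPerna--Lions Theorem II.2 itself, whose proof is precisely the bounded-$\beta$-then-cutoff argument above --- and you also need the flow theory of their Section III. Granting those as citations, your change of variables does yield the integrated enstrophy balance and hence \eqref{s2r9}, so the endgame is salvageable; but the paper (following \cite{DiP-Lio}) reaches \eqref{s2r9} directly from the Eulerian renormalization argument, with no flow construction and no detour through uniqueness.
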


\begin{proof}
This follows from the proof of \cite[Theorem II.2 and equation (26)]{DiP-Lio},
the only difference being that in \cite{DiP-Lio} the unforced case
($g=0$) is considered.
\end{proof}

\begin{corollary}\label{s2cor1}
Let $g\in \mathcal{H}^{1}$.  If $u(t,x)\in L^{\infty }(0,T;\mathcal{H}^{1})$
is a weak solution of \eqref{s1r1}, then
the function $\|u(t)\|^2_1=\|u(t)\|^{2}+\|\nabla u(t)\|^2$ is absolutely continuous
and
\begin{equation} \label{s2r10}
\aligned
\frac{1}{2}\frac{d}{dt}\bigl(\|u(t)\|^{2}+\|\nabla u(t)\|^2\bigr)
+r\left(\|u(t)\|^{2}+\|\nabla u(t)\|^2\right)=\\=(u(t),g)+(\nabla u(t),\nabla g)
\endaligned
\end{equation}
almost everywhere on  $t\in(0,T)$.  In addition,
$u\in C([0,T];\mathcal{H}^{1})$ and
$\partial _{t}u\in C([0,M];\mathcal{H}^{-\varepsilon})$ for every $\varepsilon>0$.
\end{corollary}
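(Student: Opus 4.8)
The plan is to assemble \eqref{s2r10} from the two scalar balance laws already in hand and then bootstrap the two continuity assertions. First I would invoke Corollary~\ref{s1cor1}, which supplies the energy equality \eqref{s1r21} together with the absolute continuity of $t\mapsto\|u(t)\|^2$. In parallel, by Proposition~\ref{s2pro1} the vorticity $\omega=\func{curl}u$ solves the transport equation \eqref{s2r8} in the weak sense \eqref{weakomega}; hence Theorem~\ref{s2the1} applies and yields the enstrophy equality \eqref{s2r9} for $\|\omega(t)\|^2=\|\func{curl}u(t)\|^2$, again with absolute continuity of the squared norm.

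Next I would convert \eqref{s2r9} into the $\nabla u$ form. Since $\func{div}u=0$, relation \eqref{s1r3} gives $\|\func{curl}u\|^2=\|\nabla u\|^2$, and polarizing that identity (using $\func{div}u=\func{div}g=0$) gives $(\func{curl}u,\func{curl}g)=(\nabla u,\nabla g)$. Thus \eqref{s2r9} reads $\tfrac12\tfrac{d}{dt}\|\nabla u(t)\|^2+r\|\nabla u(t)\|^2=(\nabla u(t),\nabla g)$. Adding this to \eqref{s1r21} produces exactly \eqref{s2r10}; and since $\|u(t)\|^2$ and $\|\nabla u(t)\|^2$ are each absolutely continuous, so is their sum $\|u(t)\|_1^2$.

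For the strong continuity $u\in C([0,T];\mathcal H^1)$, I would combine the weak continuity $u\in C_w([0,T];\mathcal H^1)$ (established before Theorem~\ref{s1the1}) with the continuity of the norm $t\mapsto\|u(t)\|_1$, which follows from the absolute continuity of $\|u(t)\|_1^2$ just proved. In the Hilbert space $\mathcal H^1$, weak convergence $u(t)\rightharpoonup u(t_0)$ together with $\|u(t)\|_1\to\|u(t_0)\|_1$ forces strong convergence via $\|u(t)-u(t_0)\|_1^2=\|u(t)\|_1^2-2(u(t),u(t_0))_1+\|u(t_0)\|_1^2\to0$, upgrading weak to strong continuity.

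Finally, for $\partial_t u\in C([0,T];\mathcal H^{-\varepsilon})$ I would read off $\partial_t u=g-ru-B(u,u)$ from \eqref{s1r12}. The term $g$ is constant in $t$, and $-ru$ is continuous since $u\in C([0,T];\mathcal H^1)\hookrightarrow C([0,T];\mathcal H^{-\varepsilon})$. The only genuine point is continuity of the quadratic term into $\mathcal H^{-\varepsilon}$: writing $B(u(t),u(t))-B(u(s),u(s))=B(u(t)-u(s),u(t))+B(u(s),u(t)-u(s))$ and applying the bilinear bound \eqref{s1r10eps} gives $\|B(u(t),u(t))-B(u(s),u(s))\|_{-\varepsilon}\le C_\varepsilon\|u(t)-u(s)\|_1(\|u(t)\|_1+\|u(s)\|_1)$; since $\|u(\cdot)\|_1$ is bounded by \eqref{s1r15} and $\|u(t)-u(s)\|_1\to0$ by the strong continuity just obtained, continuity follows. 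I expect the main obstacle to be not any single estimate but respecting this logical order: absolute continuity of the norm must come first, as it is precisely what turns weak continuity into strong continuity, which in turn is what renders the nonlinearity continuous into $\mathcal H^{-\varepsilon}$.
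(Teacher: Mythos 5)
Your proposal is correct and follows essentially the same route as the paper's own proof: summing the energy equality of Corollary~\ref{s1cor1} with the enstrophy equality of Theorem~\ref{s2the1} (applicable via Proposition~\ref{s2pro1}) and using $\|\nabla u\|=\|\func{curl}u\|$ to obtain \eqref{s2r10}, then upgrading weak continuity in $\mathcal{H}^1$ to strong continuity via norm convergence, and finally deducing continuity of $\partial_t u$ in $\mathcal{H}^{-\varepsilon}$ from the continuity of $F(u)=g-ru-B(u,u)$ guaranteed by \eqref{s1r10eps}. Your extra details (the polarization identity $(\func{curl}u,\func{curl}g)=(\nabla u,\nabla g)$ and the bilinear splitting of $B(u(t),u(t))-B(u(s),u(s))$) only make explicit what the paper leaves implicit.
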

\begin{proof}
We recall that $\|\nabla u\| =\|\func{curl}u\|$ for all $u\in
\mathcal{H}^{1}$ (see \eqref{s1r3}). Therefore, the equalities \eqref{s1r21}
and \eqref{s2r9} prove  the full $H^1$-norm equality \eqref{s1r22}
(equivalently, \eqref{s2r10})  for any weak
solution of the damped Euler system \eqref{s1r1}.

Next,  we set $F(u)=-B(u,u)-ru+g$. Then $\partial _{t}u=F(u)$, see
\eqref{s1r12}. It follows from \eqref{s1r10eps} that the operator
$F$ is continuous from $\mathcal{H}^{1}$ to
$\mathcal{H}^{-\varepsilon}$. Therefore $\partial_tu$ is  bounded
in $L^\infty(0,T;\mathcal{H}^{-\varepsilon})$, and, hence, $u\in
C([0,T];\mathcal{H}^{-\varepsilon})$. Since $u$ is bounded in
$L^\infty(0,T;\mathcal{H}^{1})$, it follows that $u$ is weakly
continuous in $\mathcal{H}^{1}$, see, for instance,
\cite[Lemma~III.1.4]{TemNS}. Now the continuity with respect to
time of the norms $\|u(t)\|$ and $\|\nabla u(t)\|$ proved in
Corollary~\ref{s1cor1} and Theorem~\ref{s2the1} and the  weak
continuity of $u$ in $\mathcal{H}^{1}$ imply the strong continuity:
$u\in C([0,T];\mathcal{H}^{1})$. This also gives that $\partial
_{t}u\in C([0,T];\mathcal{H}^{-\varepsilon})$.
\end{proof}

\section{Weak trajectory attractor for damped Euler system \label{Sec3}}

We consider the following two linear spaces
\begin{equation*}
\mathcal{F}_{+}^{\infty }=L^{\infty }(\mathbb{R}_{+};\mathcal{H}^{1})\quad
\text{and}\quad \mathcal{F}_{+}^{\mathrm{loc}}=L_{\mathrm{loc}}^{\infty }(%
\mathbb{R}_{+};\mathcal{H}^{1}).
\end{equation*}%
Recall that
$z(t,x)\in L_{\mathrm{loc}}^{\infty }(\mathbb{R}_{+};\mathcal{H}^{1})$ if and only if
 $z(t,x)\in L^{\infty }(0,M;\mathcal{H}^{1})$ for all $M>0$.
We clearly have
$\mathcal{F}_{+}^{\infty }\subset \mathcal{F}_{+}^{\mathrm{loc}}$.

The space $\mathcal{F}_{+}^{\mathrm{loc}}$ is equipped with the following
local weak topology $\Theta _{+}^{\mathrm{loc,w}}$. By definition, a
sequence $z_{n}\in\mathcal{F}_{+}^{\mathrm{loc}}$ converges in $\Theta _{+}^{\mathrm{loc,w}}$
to an element $z$   if  $z_{n}\rightharpoondown z$ $\ast $-weakly in
$L^{\infty }(0,M;\mathcal{H}^{1})$ for every $M>0$.
Sometimes, when it causes no ambiguity,  we  denote by $\Theta _{+}^{\mathrm{loc,w}}$
the space $\mathcal{F}_{+}^{\mathrm{loc}}$ with topology $\Theta _{+}^{\mathrm{loc,w}}$.

We now define the \emph{trajectory space} $\mathcal{K}_{+}$ for the equation
(\ref{s1r1}).

\begin{definition}\label{s3def1}
{\rm The trajectory space $\mathcal{K}_{+}$ consists of all functions from $%
\mathcal{F}_{+}^{\infty }=L^{\infty }(\mathbb{R}_{+};\mathcal{H}^{1})$ that
are weak solutions of  system (\ref{s1r1}) on every interval $(0,M)$.
The elements from $\mathcal{K}_{+}$ are called the \emph{trajectories}.
}
\end{definition}

It follows from Theorem \ref{s1the1} that for any $u_{0}\in\mathcal{H}^{1}$ there is at least
one trajectory $u\in \mathcal{K}_{+}$ such that $u(0)=u_{0}$.
Thus, $\mathcal{K}_{+}$ is non-empty.

\begin{proposition} \label{s3pro1}
 The trajectory space $\mathcal{K}_{+}$ is sequentially closed in the
topology $\Theta _{+}^{\mathrm{loc,w}}$ and, in addition,
 $\mathcal{K}_{+}\subset C_{b}(\mathbb{R}_{+};\mathcal{H}^{1})$.
\end{proposition}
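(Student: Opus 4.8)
The plan is to prove the two assertions separately: first the sequential closedness, then the inclusion into $C_b$.

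For the closedness, let $z_n\in\mathcal{K}_+$ converge to $z$ in $\Theta_+^{\mathrm{loc,w}}$, i.e. $z_n\rightharpoondown z$ $\ast$-weakly in $L^\infty(0,M;\mathcal{H}^1)$ for every $M>0$; the goal is $z\in\mathcal{K}_+$. I would begin by recording \emph{uniform} bounds. Since each $z_n$ is a weak solution, Corollary~\ref{s2cor1} yields the full $H^1$ energy equality \eqref{s2r10}, and an application of Gronwall's lemma (after $(u,g)+(\nabla u,\nabla g)\le\tfrac{r}{2}\|u\|_1^2+\tfrac1{2r}\|g\|_1^2$) turns it into the dissipative estimate \eqref{s1r15}, now valid for \emph{every} weak solution. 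The $\ast$-weak convergence on $(0,1)$ forces $\sup_n\|z_n\|_{L^\infty(0,1;\mathcal{H}^1)}<\infty$ by Banach--Steinhaus, and since $z_n\in C_w([0,1];\mathcal{H}^1)$, weak lower semicontinuity of the norm at $t=0$ bounds $\|z_n(0)\|_1$ uniformly in $n$. Inserting this into \eqref{s1r15} gives a bound $\|z_n(t)\|_1\le R$ uniform in both $n$ and $t\ge0$.

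Next I would pass to the limit in the weak formulation \eqref{s1r13} on an arbitrary interval $(0,M)$, for a fixed divergence-free $\varphi\in C_0^\infty((0,M)\times\mathbb{R}^2)$. The linear terms converge at once from the definition of $\ast$-weak convergence, testing against $\partial_t\varphi,\varphi\in L^1(0,M;\mathcal{H}^{-1})$. The nonlinear term is the crux. Here I would use that \eqref{s1r13a}, together with the uniform $H^1$ bound, gives $\partial_t z_n$ bounded in $L^\infty(0,M;\mathcal{H}^{-1})$; the Aubin--Lions--Simon lemma, applied on the bounded spatial support of $\varphi$ where $\mathcal{H}^1\hookrightarrow\hookrightarrow L^2$ compactly, then yields strong convergence $z_n\to z$ in $L^2(0,M;L^2_{\mathrm{loc}})$. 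Combined with the uniform $L^\infty(0,M;L^4)$ bound furnished by $\mathcal{H}^1\subset L^4$, this gives $z_n^iz_n\to z^iz$ in $L^1(0,M;L^1_{\mathrm{loc}})$, which suffices to pass to the limit in $\int_0^M(z_n^iz_n,\partial_i\varphi)\,dt$ because $\partial_i\varphi$ is bounded with compact support. Hence $z$ satisfies \eqref{s1r13} on every $(0,M)$ and is a weak solution there. To conclude the closedness, weak lower semicontinuity of the norm gives $\|z(t)\|_1\le R$ for a.e.\ $t\ge0$, so $z\in\mathcal{F}_+^\infty=L^\infty(\mathbb{R}_+;\mathcal{H}^1)$; being a weak solution on each interval, $z\in\mathcal{K}_+$.

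For the inclusion $\mathcal{K}_+\subset C_b(\mathbb{R}_+;\mathcal{H}^1)$, any $u\in\mathcal{K}_+$ is a weak solution, so Corollary~\ref{s2cor1} gives $u\in C([0,M];\mathcal{H}^1)$ for every $M$, that is, $u\in C(\mathbb{R}_+;\mathcal{H}^1)$; since by Definition~\ref{s3def1} one has $u\in L^\infty(\mathbb{R}_+;\mathcal{H}^1)$, the continuous trajectory is uniformly bounded in time, whence $u\in C_b(\mathbb{R}_+;\mathcal{H}^1)$. I expect the single genuine obstacle to be the compactness step for the nonlinear term, namely securing strong local-in-space $L^2$ convergence from the $\mathcal{H}^1$ bound and the $\partial_t u\in\mathcal{H}^{-1}$ bound; the remaining passages are soft functional-analytic arguments once the uniform dissipative estimate is in hand.
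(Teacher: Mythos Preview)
Your proposal is correct and follows essentially the same approach as the paper: local-in-space Aubin--Lions compactness from the uniform $\mathcal{H}^1$ and $\partial_t u\in\mathcal{H}^{-1}$ bounds to pass to the limit in the weak formulation, and Corollary~\ref{s2cor1} for the $C_b(\mathbb{R}_+;\mathcal{H}^1)$ inclusion. The only minor variations are that the paper simply assumes the limit lies in $\mathcal{F}_+^\infty$ from the outset (whereas you derive it via the dissipative estimate~\eqref{s1r15}), and for the nonlinear term the paper invokes a.e.\ convergence together with Lions's weak-convergence lemma to get $u_n^iu_n\rightharpoondown u^iu$ weakly in $L^2_{\mathrm{loc}}$, rather than your direct $L^1_{\mathrm{loc}}$ argument.
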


\begin{proof}
Let $\{u_{n}(t,x)\}$ be a sequence from $\mathcal{K}_{+}$ and let
$u_{n}(t,x)\rightarrow u(t,x)$ in $\Theta _{+}^{\mathrm{loc,w}}$
for some $u\in $\textrm{$\mathcal{F}_{+}^{\infty }$}$,$ i.e.,
\begin{equation}
u_{n}(t,x)\rightharpoondown u(t,x)\quad \text{$\ast$-weakly in }L^{\infty
}(0,M;\mathcal{H}^{1}),\ \forall M>0.  \label{S3r1}
\end{equation}
We claim that $u\in \mathcal{K}_{+}$. The functions $u_{n}(t,x)$
satisfy (\ref{s1r1}) in the sense of distributions, that is,
\begin{equation}
-\int_{0}^{M}(u_{n},\partial _{t}\varphi )dt-\int_{0}^{M}
\sum_{i=1}^{2}(u_{n}^{i}u_{n},\partial _{i}\varphi )dt
+r\!\int_{0}^{M}(u_{n},\varphi )dt=\int_{0}^{M}\!(g,\varphi )dt
\label{S3r2}
\end{equation}
for every $\varphi (t,x)\in C_{0}^{\infty }((0,M)\times
\mathbb{R}^{2})$, $\func{div}\varphi =0.$ Let us show that $u(t,x)$
is also a distributional solution of (\ref{s1r1}). We fix an arbitrary
$M>0$ and $\varphi \in C_{0}^{\infty }((0,M)\times
\mathbb{R}^{2})$, $\func{div}\varphi =0.$ Let the ball
$B_{0}^{R}:=\{x\in \mathbb{R}^{2},|x|\leq R\}$ contain the support
of $\varphi$.

It follows from (\ref{S3r1}) that $\{u_{n}(t,x)\}$ is bounded in
$L^{\infty }(0,M;\mathcal{H}^{1})$. Then due to \eqref{s1r13a}
$\{\partial _{t}u_{n}(t,x)\}$ is bounded in $L^{\infty}(0,M;\mathcal{H}^{-1})$.
 Passing to a subsequence, we may assume that
\begin{equation*}
\partial _{t}u_{n}(t,x)\rightharpoondown \partial _{t}u(t,x)\quad
\text{$\ast$-weakly in }L^{\infty }(0,M;\mathcal{H}^{-1}).
\end{equation*}
Applying Aubin compactness theorem (see, for instance, \cite{BoFa},
\cite{CVbook}) and restricting the functions to the balls
$B_{0}^{R}$, we conclude that
\begin{equation}
u_{n}\rightarrow u\quad \text{strongly in }L^{2}(0,M;\mathcal{H}(B_{0}^{R})).
\label{S3r4}
\end{equation}
Recall that $L^{2}(0,M;\mathcal{H}(B_{0}^{R}))\subset
L^{2}((0,M)\times B_{0}^{R})^{2}$ and therefore
\begin{equation}
u_{n}(t,x)\rightarrow u(t,x)\quad \text{for a.e. }(t,x)\in (0,M)\times
B_{0}^{R}.  \label{S3r5}
\end{equation}
Consider the trilinear term in the left-hand side of (\ref{S3r2}).
It follows from (\ref{S3r5}) that for $i=1,2$
\begin{equation}
u_{n}^{i}(t,x)u_{n}(t,x)\rightarrow u^{i}(t,x)u(t,x)\quad \text{for a.e. }
(t,x)\in (0,M)\times B_{0}^{R}.  \label{S3r6}
\end{equation}
Recall that the sequence $\{u_{n}\}$ is bounded in $L^{\infty }(0,M;\mathcal{H}^{1})$.
 Then, due to the embedding $\mathcal{H}^{1}\subset L^{4}(B_{0}^{R})^{2}$, we see that
\begin{equation}
\{u_{n}^{i}u_{n}\}\quad \text{is bounded in }L^{\infty
}(0,M;L^{2}(B_{0}^{R})^{2})  \label{S3r7}
\end{equation}
and in $L^{2}((0,M)\times B_{0}^{R})^{2}$ as well. Applying
 \cite[Lemma ~1.1.3]{Lio} on weak convergence, we conclude
from \eqref{S3r6} and \eqref{S3r7} that
$$
u_{n}^{i}u_{n}\rightharpoondown u^{i}u\quad \text{weakly in  }
L^{2}((0,M)\times B_{0}^{R})^{2}.
$$
Therefore, since $\mathrm{supp\ }\varphi\subset B_{0}^{R},$
$$
\int_{0}^{M}\sum_{i=1}^{2}(u_{n}^{i}u_{n},\partial_{i}\varphi )dt\rightarrow \int_{0}^{M}
\sum_{i=1}^{2}(u^{i}u,\partial _{i}\varphi )dt\quad \text{as }n\rightarrow
\infty.
$$
Finally, we see that (\ref{S3r4}) allows to pass to the limit as
$n\rightarrow \infty $ in the remaining  terms of (\ref{S3r2}) and we
obtain the equality
$$
-\int_{0}^{M}(u,\partial _{t}\varphi )dt-\int_{0}^{M}
\sum_{i=1}^{2}(u^{i}u,\partial _{i}\varphi)dt+r\int_{0}^{M}(u,\varphi )dt=
\int_{0}^{M}(g,\varphi )dt.
$$
Since the function $\varphi $ and the number $M>0$ are arbitrary,
the function $u$ is a weak solution of (\ref{s1r1}), that is, $u\in
\mathcal{K}_{+}.$ Hence, $\mathcal{K}_{+}$ is sequentially closed in the
topology $\Theta _{+}^{\mathrm{loc,w}}$.

The embedding $\mathcal{K}_{+}\subset
C_{b}(\mathbb{R}_{+};\mathcal{H}^{1})$ follows from Corollary
\ref{s2cor1}.
\end{proof}

\begin{remark}\label{s3rem1}
 Any ball
$\mathcal{B}(0,R)=
\left\{ z\in \mathcal{F}_{+}^{\infty },\ \| z\| _{\mathcal{F}_{+}^{\infty }}\le R\right\} $ is a
compact subset in the topology $\Theta _{+}^{\mathrm{loc,w}}$ and the
corresponding space is metrizable and complete (see \cite{CVbook, RobRob}).
Therefore the intersection of the trajectory space $\mathcal{K}_{+}$
with $\mathcal{B}(0,R)$ is compact in the topology $\Theta _{+}^{\mathrm{loc,w}}$.

On the other hand, the entire space $\mathcal{F}_{+}^{\mathrm{loc}}$ and
its subspace $\mathcal{F}_{+}^{\infty }$ are not metrizable in
the topology $\Theta _{+}^{\mathrm{loc,w}}$.
\end{remark}

We now consider the translation semigroup $\{T(h)\}:=\{T(h),h\geq 0\}$
acting on the spaces $\mathcal{F}_{+}^{\mathrm{loc}}$ and
$\mathcal{F}_{+}^{\infty }$ by the formula $T(h)z(t)=z(t+h)$. The translations $T(h)$
clearly map the trajectory space $\mathcal{K}_{+}$ into itself:
$$
T(h)\mathcal{K}_{+}\subseteq \mathcal{K}_{+},\ \forall h\geq 0.
$$
Indeed, if $u(t),t\geq 0,$ is a weak solution of \eqref{s1r1} on every
interval $(0,M)$ then the function $u(t+h),t\geq 0$ is also a weak solution
of \eqref{s1r1} since the equation is autonomous.

\begin{proposition}
\label{s3pro2} The ball $\mathcal{B(}0,R_{0})$ in $\mathcal{F}_{+}^{\infty }$
with radius $R_{0}=\sqrt{2}r^{-1}\Vert g\Vert _{1}$ is an absorbing set of
the semigroup $\{T(h)\}$ acting on $\mathcal{K}_{+},$ that is, for any
 subset $B\subset \mathcal{K}_{+}$ bounded in $\mathcal{F}_{+}^{\infty }$
there is a number $h_{1}=h_{1}(B)$ such that $T(h)B\subset \mathcal{B(}0,R_{0})$ for
all $h\geq h_{1}$.
\end{proposition}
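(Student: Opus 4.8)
The plan is to read off the absorbing property directly from the dissipative estimate \eqref{s1r15}, the only subtlety being that this estimate must be available for \emph{every} trajectory in $\mathcal{K}_+$ rather than merely for the solution constructed in Theorem~\ref{s1the1}. First I would fix a set $B\subset\mathcal{K}_+$ bounded in $\mathcal{F}_+^\infty$ and put $R:=\sup_{u\in B}\|u\|_{\mathcal{F}_+^\infty}<\infty$. Since the norm on $\mathcal{F}_+^\infty=L^\infty(\mathbb{R}_+;\mathcal{H}^1)$ is $\|z\|_{\mathcal{F}_+^\infty}=\operatorname{ess\,sup}_{t\ge0}\|z(t)\|_1$ and, by Proposition~\ref{s3pro1}, $\mathcal{K}_+\subset C_b(\mathbb{R}_+;\mathcal{H}^1)$, the bound $\|u(t)\|_1\le R$ in fact holds for \emph{all} $t\ge0$ and all $u\in B$; in particular $\|u(0)\|_1\le R$.

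The key observation is that \eqref{s1r15} holds for an arbitrary $u\in\mathcal{K}_+$. Indeed, Corollary~\ref{s2cor1} provides the full $H^1$ energy equality \eqref{s2r10} for every weak solution, and its right-hand side is estimated by Cauchy--Schwarz and Young as
$$
(u(t),g)+(\nabla u(t),\nabla g)\le \|u(t)\|_1\|g\|_1\le \frac{r}{2}\|u(t)\|_1^2+\frac{1}{2r}\|g\|_1^2,
$$
which turns \eqref{s2r10} into $\frac{d}{dt}\|u(t)\|_1^2+r\|u(t)\|_1^2\le r^{-1}\|g\|_1^2$. Gronwall's lemma then recovers \eqref{s1r15} for every trajectory. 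Recalling that $R_0^2=2r^{-2}\|g\|_1^2$, this gives
$$
\|u(t)\|_1^2\le \|u(0)\|_1^2 e^{-rt}+r^{-2}\|g\|_1^2\le R^2 e^{-rt}+\frac{1}{2}R_0^2,\qquad t\ge 0.
$$

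It then remains to translate and optimize. Since $T(h)u(t)=u(t+h)$ and the $\mathcal{F}_+^\infty$-norm is the supremum in $t$,
$$
\|T(h)u\|_{\mathcal{F}_+^\infty}^2=\sup_{s\ge h}\|u(s)\|_1^2\le R^2 e^{-rh}+\frac{1}{2}R_0^2,
$$
because $e^{-rs}$ is decreasing. Choosing $h_1=h_1(B):=\max\bigl\{0,\,r^{-1}\ln(2R^2/R_0^2)\bigr\}$ ensures $R^2 e^{-rh}\le \tfrac12 R_0^2$ for all $h\ge h_1$, whence $\|T(h)u\|_{\mathcal{F}_+^\infty}^2\le R_0^2$, i.e.\ $T(h)u\in\mathcal{B}(0,R_0)$, uniformly over $u\in B$. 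I do not anticipate any genuine obstacle here: the dissipative mechanism is entirely contained in \eqref{s1r15}, and the only point requiring care is that this estimate is applied to \emph{every} trajectory, which is exactly what the enstrophy (hence full $H^1$) equality of Corollary~\ref{s2cor1} guarantees; the rest is the standard exponential-decay argument for absorbing sets.
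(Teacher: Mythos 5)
Your proposal is correct and follows essentially the same route as the paper: invoke Corollary~\ref{s2cor1} to get the full $H^1$ equality \eqref{s2r10} for \emph{every} weak solution (precisely the subtlety you flag), deduce the dissipative estimate \eqref{s1r15} for all trajectories, and then run the standard exponential-decay absorption argument with the same choice of $h_1$ (your condition $e^{-rh_1}\le R_0^2/(2R^2)=r^{-2}\|g\|_1^2/R^2$ is exactly the paper's). The only difference is that you write out the Cauchy--Schwarz/Young/Gronwall steps that the paper leaves implicit, which is a matter of detail rather than of method.
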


\begin{proof}
It follows from Corollary~\ref{s2cor1} that any weak solution
satisfies the enstrophy equality~\eqref{s2r10}. This shows that
any weak solution satisfies  the dissipative estimate~\eqref{s1r15}.
(In Theorem~\ref{s1the1} we proved that a weak solution
constructed by the Galerkin approximations satisfies~\eqref{s1r15}.)
It now follows from inequality \eqref{s1r15} that if $B$ is a bounded set of trajectories
from $\mathcal{K}_{+}$ and $\| u\| _{\mathcal{F}_{+}^{\infty }}\leq R$
for all $u\in B$, then
\begin{equation*}
\Vert T(h)u\Vert _{\mathcal{F}_{+}^{\infty }}^{2}=\underset{t\geq 0}{\mathrm{%
ess}\sup }\Vert u(h+t)\Vert _{1}^{2}\leq R^{2}e^{-rh}+r^{-2}\Vert g\Vert
_{1}^{2}\leq 2r^{-2}\Vert g\Vert _{1}^{2}=R_{0}^{2},
\end{equation*}%
for all $h\ge h_1$, where $e^{-rh_{1}}\leq r^{-2}\Vert g\Vert _{1}^{2}/R^{2}.$ That is, $%
T(h)u\subset \mathcal{B(}0,R_{0})$ for $h\geq h_{1}.$ Hence, the ball
 $\mathcal{B}(0,R_{0})$ is absorbing.
\end{proof}

We now recall the definition of the weak trajectory attractor (see \cite{CVbook, CV2, VC-UMN}).

\begin{definition} \label{s3def2}
{\rm
A set $\mathfrak{A}\subset \mathcal{K}_{+}$ is called a
weak trajectory attractor of (\ref{s1r1}) if

\begin{enumerate}
\item $\mathfrak{A}$ is compact in $\Theta _{+}^{\mathrm{loc,w}}$ and
bounded in $\mathcal{F}_{+}^{\infty }$;

\item $\mathfrak{A}$ is strictly invariant, that is, $T(h)\mathfrak{A}=%
\mathfrak{A}$ for all $h\geq 0$;

\item $\mathfrak{A}$ attracts any bounded trajectory set $B\subset \mathcal{K%
}_{+}$, that is,  for any neighborhood $\mathcal{O}(\mathfrak{A})$ of
$\mathfrak{A}$ in the topology $\Theta _{+}^{\mathrm{loc,w}},$ there is
$h_{1}=h_{1}(B,\mathcal{O})\geq 0$ such that
$T(h)B\subset \mathcal{O}(\mathfrak{A})$ for all $h\geq h_{1}$.
\end{enumerate}
}
\end{definition}

Similarly  to the space $\mathcal{F}_{+}^{\mathrm{loc}}$ and the space
$\mathcal{F} _{+}^{\infty }$ with topology $\Theta _{+}^{\mathrm{loc,w}}$, we consider the
space $\mathcal{F}^{\infty }=L^{\infty }(\mathbb{R};\mathcal{H}^{1})$ and the space
$\mathcal{F}^{\mathrm{loc}}=L_{\mathrm{loc}}^{\infty }(\mathbb{R};\mathcal{H}^{1})$ with
topology $\Theta ^{\mathrm{loc,w}}$ defined on the entire time-axis.

\begin{definition} \label{s3def3}
{\rm
The kernel $\mathcal{K}$ of the system (\ref{s1r1}) is the
union of all functions $u(t,x)\in L^{\infty }(\mathbb{R};\mathcal{H}^{1})$
that are weak solutions of (\ref{s1r1}) on any interval $(-M,M)$. The elements
of $\mathcal{K}$ are often called {complete bounded trajectories} of
the system (\ref{s1r1}).
}
\end{definition}

\begin{proposition}
\label{s3pro3} The kernel $\mathcal{K}$ of  system (\ref{s1r1}) belongs
to $C_{b}(\mathbb{R};\mathcal{H}^{1}).$
\end{proposition}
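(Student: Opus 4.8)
The plan is to reduce the statement to Corollary~\ref{s2cor1}, in exactly the same spirit in which Proposition~\ref{s3pro1} reduced the corresponding assertion about $\mathcal{K}_{+}$. Fix a complete bounded trajectory $u\in\mathcal{K}$, so that $u\in L^{\infty}(\mathbb{R};\mathcal{H}^{1})$ and $u$ is a weak solution of~\eqref{s1r1} on every interval $(-M,M)$. First I would localize in time: for an arbitrary $M>0$ set $v(t):=u(t-M)$ for $t\in(0,2M)$. Since the system~\eqref{s1r1} is autonomous, $v$ is again a weak solution of~\eqref{s1r1}, now on $(0,2M)$, and $v\in L^{\infty}(0,2M;\mathcal{H}^{1})$. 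Thus $v$ meets the hypotheses of Corollary~\ref{s2cor1}.

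Applying Corollary~\ref{s2cor1} to $v$ on $(0,2M)$ yields $v\in C([0,2M];\mathcal{H}^{1})$, and, undoing the shift, $u\in C([-M,M];\mathcal{H}^{1})$. As $M>0$ is arbitrary, exhausting $\mathbb{R}$ by the intervals $(-M,M)$ gives $u\in C(\mathbb{R};\mathcal{H}^{1})$. The decisive input provided by Corollary~\ref{s2cor1} is the \emph{strong} (rather than merely weak) continuity in $\mathcal{H}^{1}$: it comes from the continuity in time of the full norm $\|u(t)\|_{1}$, which is guaranteed by the enstrophy equality~\eqref{s2r10}, combined with the weak continuity of $u$ in $\mathcal{H}^{1}$.

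It then remains to check boundedness. By the definition of $\mathcal{K}$ we have $u\in L^{\infty}(\mathbb{R};\mathcal{H}^{1})$, so the essential supremum over $t\in\mathbb{R}$ of $\|u(t)\|_{1}$ is finite. Since $t\mapsto\|u(t)\|_{1}$ is continuous by the previous step, this essential supremum coincides with the ordinary supremum, whence $\sup_{t\in\mathbb{R}}\|u(t)\|_{1}<\infty$. Therefore $u\in C_{b}(\mathbb{R};\mathcal{H}^{1})$, as claimed.

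I do not expect any genuine obstacle here. The only point requiring a word of care is that Corollary~\ref{s2cor1} is stated on a forward interval $(0,T)$, whereas the kernel lives on the whole line; but the autonomy of~\eqref{s1r1} makes the time-shift onto such an interval harmless. The substantive work, namely the passage from weak to strong continuity via the enstrophy equality, has already been carried out in Section~\ref{Sec2}, so the present proposition is obtained as a direct consequence.
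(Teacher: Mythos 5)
Your proof is correct and matches the paper's intent: the paper states Proposition~\ref{s3pro3} without an explicit proof, treating it (like the embedding $\mathcal{K}_{+}\subset C_{b}(\mathbb{R}_{+};\mathcal{H}^{1})$ in Proposition~\ref{s3pro1}) as a direct consequence of Corollary~\ref{s2cor1}, and your time-shift argument using autonomy, followed by upgrading the essential supremum to a genuine supremum via continuity, is exactly the routine verification being left to the reader.
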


The next theorem proves the existence and describes the structure of the
weak trajectory attractor for damped 2D Euler system.

\begin{theorem}
\label{s3the1}Let the external force $g\in \mathcal{H}^{1}$. Then the
equation \eqref{s1r1} has the weak trajectory attractor and
\begin{equation} \label{s3r1}
\mathfrak{A}=\Pi _{+}\mathcal{K},
\end{equation}
where $\mathcal{K}$ is the kernel of \eqref{s1r1} and $\Pi _{+}$
denotes the restriction operator onto the semiaxis $\mathbb{R}_{+}.$
Besides, $\mathfrak{A}$ is bounded in $C_{b}(\mathbb{R}_{+};\mathcal{H}^{1})$
and the following inequality holds:
\begin{equation}
\Vert u\Vert _{C_{b}(\mathbb{R}_{+};\mathcal{H}^{1})}\leq r^{-1}\Vert g\Vert
_{1},\ \forall u\in \mathfrak{A}.  \label{s3r2}
\end{equation}
\end{theorem}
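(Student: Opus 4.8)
The plan is to apply the general theory of trajectory attractors for equations with possibly non-unique solutions (as developed in \cite{CVbook}), whose hypotheses have essentially been verified in the preceding propositions. First I would record the three structural facts that the abstract machinery requires: (i) the trajectory space $\mathcal{K}_+$ is sequentially closed in $\Theta_+^{\mathrm{loc,w}}$, which is Proposition~\ref{s3pro1}; (ii) the translation semigroup $\{T(h)\}$ is continuous on $\mathcal{K}_+$ in this topology, which is immediate, since if $u_n\rightharpoondown u$ $*$-weakly in $L^\infty(0,M;\mathcal{H}^1)$ for every $M$, then $u_n(\cdot+h)\rightharpoondown u(\cdot+h)$ as well; and (iii) the set $\mathcal{P}:=\mathcal{B}(0,R_0)\cap\mathcal{K}_+$ is a compact (by Remark~\ref{s3rem1}) absorbing (by Proposition~\ref{s3pro2}) set in $\Theta_+^{\mathrm{loc,w}}$.

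With these in hand I would define the attractor as the $\omega$-limit set of the absorbing set,
\begin{equation*}
\mathfrak{A}=\omega(\mathcal{P})=\bigcap_{h\ge0}\Big[\bigcup_{\tau\ge h}T(\tau)\mathcal{P}\Big]_{\Theta_+^{\mathrm{loc,w}}},
\end{equation*}
where the brackets denote closure in $\Theta_+^{\mathrm{loc,w}}$. Because $\mathcal{P}$ is compact and metrizable in this topology (Remark~\ref{s3rem1}), all the relevant limits are sequential, and the standard $\omega$-limit argument shows that $\mathfrak{A}$ is nonempty, compact in $\Theta_+^{\mathrm{loc,w}}$, strictly invariant ($T(h)\mathfrak{A}=\mathfrak{A}$), and attracts every bounded $B\subset\mathcal{K}_+$, thereby verifying the three items of Definition~\ref{s3def2}.

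The substantive point is the representation $\mathfrak{A}=\Pi_+\mathcal{K}$, which I would prove by a double inclusion. For $\Pi_+\mathcal{K}\subseteq\mathfrak{A}$, take a complete bounded trajectory $u\in\mathcal{K}$; its backward shifts $\Pi_+(u(\cdot-s))$ lie in $\mathcal{P}$ for all $s\ge0$, and since the equation is autonomous one has $\Pi_+u=T(s)\,\Pi_+(u(\cdot-s))\in\bigcup_{\tau\ge s}T(\tau)\mathcal{P}$ for every $s$, whence $\Pi_+u\in\mathfrak{A}$. The reverse inclusion $\mathfrak{A}\subseteq\Pi_+\mathcal{K}$ is the main obstacle: given $v\in\mathfrak{A}$, strict invariance furnishes for each $n$ a preimage $v_n\in\mathcal{P}$ with $T(n)v_n=v$; using the compactness of $\mathcal{P}$ together with the closedness of $\mathcal{K}_+$ (Proposition~\ref{s3pro1}), I would extract by a diagonal procedure a complete bounded trajectory $w\in\mathcal{K}$ defined on the whole line and agreeing with $v$ on $\mathbb{R}_+$, so that $v=\Pi_+w$. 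Here the metrizability of bounded balls from Remark~\ref{s3rem1} is exactly what makes this sequential extraction legitimate.

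Finally, the sharp bound \eqref{s3r2} follows from the kernel description rather than from the larger absorbing radius $R_0=\sqrt2\,r^{-1}\|g\|_1$. For any $u\in\mathcal{K}$ the enstrophy equality of Corollary~\ref{s2cor1} yields the dissipative estimate \eqref{s1r15} on every interval $(-M,t)$, namely $\|u(t)\|_1^2\le\|u(-M)\|_1^2 e^{-r(t+M)}+r^{-2}\|g\|_1^2$; since $u$ is bounded on $\mathbb{R}$ by Proposition~\ref{s3pro3}, letting $M\to\infty$ kills the first term and gives $\|u(t)\|_1\le r^{-1}\|g\|_1$ for all $t$. Combined with $\mathfrak{A}=\Pi_+\mathcal{K}$, this is precisely \eqref{s3r2} and the asserted boundedness in $C_b(\mathbb{R}_+;\mathcal{H}^1)$.
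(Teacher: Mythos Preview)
Your proposal is correct and follows essentially the same approach as the paper: both invoke the general trajectory-attractor machinery of \cite{CVbook}, verifying its hypotheses via Propositions~\ref{s3pro1}, \ref{s3pro2} and Remark~\ref{s3rem1}, and then derive the bound \eqref{s3r2} from the dissipative estimate \eqref{s1r15} applied on complete bounded trajectories. The only difference is cosmetic: the paper simply cites the abstract existence/structure theorem, whereas you unpack its proof (the $\omega$-limit construction and the double inclusion $\mathfrak{A}=\Pi_+\mathcal{K}$), and you make explicit the limiting argument $M\to\infty$ in \eqref{s1r15} that the paper leaves implicit.
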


\begin{proof}
To apply the general theorem on the existence and the structure of
trajectory attractors (see, e.g., \cite{CVbook}), we have to verify
that our trajectory space $\mathcal{K}_{+}$ contains, for the
translation semigroup $\{T(h)\}$, an attracting (or absorbing) set
which is bounded in $\mathcal{F}_{+}^{\infty }$ and compact in the
topology $\Theta _{+}^{\mathrm{loc,w}}$. We have proved in
Propositions \ref{s3pro1} and \ref{s3pro2} that such absorbing set
is $\mathcal{B(}0,R_{0})\cap \mathcal{K}_{+}$. See also Remark
\ref{s3rem1}. Therefore, there exists a trajectory attractor
$\mathfrak{A}$ for system \eqref{s1r1} with structure \eqref{s3r1}.
We also have that $\mathfrak{A\subset
}C_{b}(\mathbb{R}_{+};\mathcal{H}^{1}),$ while
 inequality \eqref{s3r2} follows from \eqref{s1r15}.
\end{proof}

Using  Corollary \ref{s2cor1}
 we obtain the following estimate for the time derivatives
of the solutions lying on the  attractor.

\begin{corollary}
\label{s3cor1} For any $u\in \mathfrak{A},$ the following inequality holds%
\begin{equation}
\Vert \partial _{t}u\Vert _{C_{b}(\mathbb{R}_{+};\mathcal{H}^{-\varepsilon})}\leq
C_{\varepsilon},  \label{s3r3}
\end{equation}%
where $\varepsilon>0$ and $C_{\varepsilon}$ depends on $r$, $\Vert g\Vert _{1}$ and is independent of $u$.
\end{corollary}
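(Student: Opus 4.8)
The plan is to read the bound off directly from the evolution equation, exploiting the uniform $\mathcal{H}^1$-bound that the attractor trajectories already enjoy. For any $u\in\mathfrak{A}$, equation \eqref{s1r12} holds and can be rewritten as $\partial _{t}u=-B(u,u)-ru+g$; taking the $\mathcal{H}^{-\varepsilon}$-norm pointwise in $t$ and applying the triangle inequality gives
$$
\|\partial _{t}u(t)\|_{-\varepsilon}\le \|B(u(t),u(t))\|_{-\varepsilon}+r\|u(t)\|_{-\varepsilon}+\|g\|_{-\varepsilon}.
$$

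First I would control the nonlinear term by the refined estimate \eqref{s1r10eps}, which yields $\|B(u(t),u(t))\|_{-\varepsilon}\le C_{\varepsilon}\|u(t)\|_{1}^{2}$ with a constant depending only on $\varepsilon$. The two linear terms I would bound via the continuous embedding $\mathcal{H}^{1}\subset\mathcal{H}^{-\varepsilon}$ (valid since $\varepsilon>0$), so that $\|u(t)\|_{-\varepsilon}\le\|u(t)\|_{1}$ and $\|g\|_{-\varepsilon}\le\|g\|_{1}$. This reduces the right-hand side to a function of $\|u(t)\|_{1}$ and $\|g\|_{1}$ alone.

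The decisive step is then to insert the uniform attractor bound \eqref{s3r2} from Theorem~\ref{s3the1}, namely $\|u(t)\|_{1}\le r^{-1}\|g\|_{1}$ for all $t\ge0$ and all $u\in\mathfrak{A}$. Substituting this makes every quantity on the right a constant $C_{\varepsilon}$ depending only on $\varepsilon$, $r$ and $\|g\|_{1}$, and independent of both $t$ and the particular trajectory $u$; taking the essential supremum over $t\ge0$ gives \eqref{s3r3}. Finally, Corollary~\ref{s2cor1} already asserts $\partial _{t}u\in C([0,M];\mathcal{H}^{-\varepsilon})$ for every $M$, so this essential supremum is genuinely the $C_{b}(\mathbb{R}_{+};\mathcal{H}^{-\varepsilon})$-norm claimed in the statement.

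I do not expect any serious obstacle here: the one point deserving care is that the constant $C_{\varepsilon}$ in \eqref{s1r10eps} depends only on $\varepsilon$ and not on $u$, which is exactly what makes the final bound uniform over the attractor. The uniformity in time is automatic, since \eqref{s3r2} is already stated as a bound on the $C_{b}$-in-time norm.
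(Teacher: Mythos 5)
Your proposal is correct and follows essentially the same route as the paper: the paper derives this corollary from Corollary~\ref{s2cor1}, whose proof uses exactly your decomposition $\partial_t u=-B(u,u)-ru+g$ together with the estimate \eqref{s1r10eps}, and the uniformity over $\mathfrak{A}$ comes from the attractor bound \eqref{s3r2} just as you argue. Your write-up merely makes explicit the details the paper leaves implicit, including the observation that the continuity of $\partial_t u$ in $\mathcal{H}^{-\varepsilon}$ turns the essential supremum into a genuine $C_b$-norm.
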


\begin{corollary} \label{s3cor2}
The trajectory attractor $\mathfrak{A}$ is compact in the
space $C_{\mathrm{loc}}(\mathbb{R}_{+};\mathcal{H}_{\mathrm{loc}}^{1-\delta })$ for any
$\delta>0$.  Moreover, for any bounded set $B\subset \mathcal{K}_{+}$%
\begin{equation}
\mathrm{dist}_{C([0,M];\mathcal{H}_{\mathrm{loc}}^{1-\delta })}\left( T(h)B,\mathfrak{A}%
\right) \rightarrow 0\ (h\rightarrow +\infty ),\ \forall M>0.  \label{s3r4}
\end{equation}%
(Here and below, $\mathrm{dist}_{\mathcal{M}}(X,Y)$ denotes the Hausdorff
semidistance from a set $X$ to a set $Y$ in a metric space $\mathcal{M}$).
\end{corollary}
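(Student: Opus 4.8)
The plan is to upgrade the weak attraction already in hand from Theorem~\ref{s3the1} to strong convergence in $C_{\mathrm{loc}}(\mathbb{R}_{+};\mathcal{H}^{1-\delta}_{\mathrm{loc}})$ by a soft compactness argument that uses \emph{no} energy or enstrophy equality (those are reserved for the full $H^1$ statement). The engine is the pair of uniform bounds available on the absorbing ball $\mathcal{B}(0,R_0)\cap\mathcal{K}_+$: boundedness in $L^{\infty}(\mathbb{R}_{+};\mathcal{H}^{1})$ from Theorem~\ref{s3the1} (inequality~\eqref{s3r2}), together with the bound $\|\partial_t u\|_{-\varepsilon}\le C_\varepsilon$, which by the computation in Corollary~\ref{s2cor1} via~\eqref{s1r10eps} holds for \emph{any} weak solution bounded in $\mathcal{H}^{1}$, and not only for $u\in\mathfrak{A}$ as literally stated in Corollary~\ref{s3cor1}. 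Recording this last point carefully is essential, since in the attraction part the relevant trajectories are translates lying in the absorbing ball rather than on $\mathfrak{A}$.

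The heart of the argument is the following claim to be isolated: if $\{w_n\}\subset\mathcal{K}_+$ is bounded in $L^{\infty}(\mathbb{R}_+;\mathcal{H}^1)$ with $\{\partial_t w_n\}$ bounded in $L^{\infty}(\mathbb{R}_+;\mathcal{H}^{-\varepsilon})$, and $w_n\to w$ in $\Theta_+^{\mathrm{loc,w}}$, then in fact $w_n\to w$ in $C_{\mathrm{loc}}(\mathbb{R}_+;\mathcal{H}^{1-\delta}_{\mathrm{loc}})$ for every $\delta>0$. To prove it I would fix $M>0$ and a ball $B_0^R$ and verify the Arzel\`a--Ascoli (equivalently Aubin--Lions--Simon) criterion in $C([0,M];\mathcal{H}^{1-\delta}(B_0^R))$. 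Precompactness of $\{w_n(t)\}$ at each fixed $t$ comes from the compact Rellich embedding $\mathcal{H}^1(B_0^R)\subset\subset\mathcal{H}^{1-\delta}(B_0^R)$. Equicontinuity in time is the single quantitative point: interpolating $\mathcal{H}^{1-\delta}$ between $\mathcal{H}^{-\varepsilon}$ and $\mathcal{H}^{1}$ with $\theta=(1-\delta+\varepsilon)/(1+\varepsilon)\in(0,1)$, so that $1-\theta=\delta/(1+\varepsilon)>0$, and writing $\Delta_n:=w_n(t_1)-w_n(t_2)$, gives
\begin{equation*}
\|\Delta_n\|_{1-\delta}\le C\,\|\Delta_n\|_{-\varepsilon}^{\,1-\theta}\,\|\Delta_n\|_{1}^{\,\theta}\le C'\,|t_1-t_2|^{\,1-\theta},
\end{equation*}
where the $\mathcal{H}^{-\varepsilon}$ factor is bounded by $C_\varepsilon|t_1-t_2|$ and the $\mathcal{H}^1$ factor by $2R_0$. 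This uniform H\"older bound yields equicontinuity, hence relative compactness in $C([0,M];\mathcal{H}^{1-\delta}(B_0^R))$; any strong limit point agrees with the weak limit $w$, so the whole sequence converges strongly, and a diagonal extraction over exhausting families of $M$'s and $R$'s promotes this to convergence in the Fr\'echet topology of $C_{\mathrm{loc}}(\mathbb{R}_+;\mathcal{H}^{1-\delta}_{\mathrm{loc}})$.

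Granted the claim, both assertions follow. For compactness of $\mathfrak{A}$: every sequence in $\mathfrak{A}$ has, by weak compactness (Definition~\ref{s3def2}(1) and the metrizability in Remark~\ref{s3rem1}), a subsequence converging in $\Theta_+^{\mathrm{loc,w}}$ to a limit in $\mathfrak{A}$; since $\mathfrak{A}$ sits in the absorbing ball and carries the derivative bound, the claim upgrades this to strong convergence, so $\mathfrak{A}$ is sequentially compact in $C_{\mathrm{loc}}(\mathbb{R}_+;\mathcal{H}^{1-\delta}_{\mathrm{loc}})$. For the attraction~\eqref{s3r4} I would argue by contradiction: were it to fail there would be $M,R,\eta>0$, times $h_n\to+\infty$ and $u_n\in B$ with $\mathrm{dist}_{C([0,M];\mathcal{H}^{1-\delta}(B_0^R))}(T(h_n)u_n,\mathfrak{A})\ge\eta$. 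Proposition~\ref{s3pro2} places $v_n:=T(h_n)u_n$ eventually in $\mathcal{B}(0,R_0)$, and the weak attraction of Theorem~\ref{s3the1} gives, after passing to a subsequence, $v_n\to v\in\mathfrak{A}$ in $\Theta_+^{\mathrm{loc,w}}$; the claim then forces $v_n\to v$ strongly in $C([0,M];\mathcal{H}^{1-\delta}(B_0^R))$, so $\mathrm{dist}(v_n,\mathfrak{A})\to0$, a contradiction.

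The main obstacle is organizational rather than analytic: securing the time-derivative bound uniformly over the absorbing set (not merely on $\mathfrak{A}$) and managing the $C_{\mathrm{loc}}$/$\mathcal{H}^{1-\delta}_{\mathrm{loc}}$ topology via the diagonal argument over the balls $B_0^R$ and intervals $[0,M]$. The only genuine estimate is the interpolation producing equicontinuity, and that is elementary.
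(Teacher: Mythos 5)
Your proposal is correct and takes essentially the same route as the paper: both derive precompactness in $C([0,M];\mathcal{H}^{1-\delta}(B_0^R))$ from the uniform $\mathcal{H}^1$ bound on bounded trajectory sets together with the time-derivative bound in a negative-order space (the paper cites the Aubin--Lions--Simon theorem where you reprove it by interpolation plus Arzel\`a--Ascoli), and then combine this with the weak attraction from Theorem~\ref{s3the1}. Your two explicit clarifications --- that the bound on $\partial_t u$ via \eqref{s1r13a}/\eqref{s1r10eps} holds for \emph{all} trajectories bounded in $\mathcal{H}^1$ rather than only on $\mathfrak{A}$ as literally stated in Corollary~\ref{s3cor1}, and the contradiction argument upgrading weak attraction to \eqref{s3r4} --- simply fill in steps the paper's proof leaves implicit.
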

\begin{proof}
In fact, if $B\subset \mathcal{K}_{+}$ is a bounded set
of trajectories $u(t,x)$, then any sequence
$\{u_{n}\}\subset B$ is bounded in $C([0,M];\mathcal{H}^{1})$ and, due to
Corollary~\ref{s3cor1}, the sequence $\{\partial _{t}u_{n}\}$ is bounded in
the space $C([0,M];\mathcal{H}^{-1})$. Applying
Aubin--Lions--Simon compactness theorem
(see, for instance,  \cite{BoFa}, \cite{CVbook})
and restricting  functions to the balls $B_0^R:=\{x\in\mathbb{R}^2,|x|\le R\}$, we conclude that $\{u_{n}\}$ is precompact in $C([0,M];%
\mathcal{H}^{1-\delta }(B_0^R))$ for any $\delta >0$ and $R<\infty$. Therefore, we have the compact
embedding
of every set $B\subset \mathcal{K}_{+}$ of bounded trajectories into
\begin{equation*}
C_{\mathrm{loc}}(\mathbb{R}_{+};\mathcal{%
H}^{1-\delta }_{\mathrm{loc}}),\ \text{for every}\  \delta>0.
\end{equation*}
This property implies \eqref{s3r4}. In particular, taking $B=\mathfrak{A}$
we obtain that $\mathfrak{A}$ is compact in
$C_{\mathrm{loc}}(\mathbb{R}_{+};\mathcal{H}^{1-\delta}_\mathrm{loc})$.
\end{proof}

In the next section we prove that Corollary~\ref{s3cor2} holds for the
limiting  $\delta =0$, furthermore, we  remove the locality condition
with respect to the spatial variable $x\in\mathbb{R}^2$.

\section{Strong global and trajectory attractors \label{Sec4}}

Our main result in this section relies heavily on the construction of
the strong \emph{global} attractor for the system \eqref{s1r1}, whose definition
we now recall. Since, generally speaking, the solutions of the system
are non-unique, we have to accordingly modify the classical definition
\cite{BV, CVbook, CoFo,Tem}.

\begin{definition} $($\cite{CVbook,VC-UMN}$)$\label{s3defglob}
{\rm
A set $\mathcal{A}\subset\mathcal{H}^1$ is called the global attractor of  system
\eqref{s1r1} if
\begin{enumerate}
  \item $\mathcal{A}$ is compact in $\mathcal{H}^1$:  $\mathcal{A}\Subset\mathcal{H}^1$;
  \item $\mathcal{A}$ is attracting: for every  set $B\subset\mathcal{K}^+$  that is bounded in
  $C_b(\mathbb{R}^+;\mathcal{H}^{1})$
$$
{\mathrm{dist}}_{\mathcal{H}^1}(B(t),\mathcal{A})\to0\ \text{as}\ t\to\infty;
$$
  \item $\mathcal{A}$ is the minimal set (with respect to inclusion)  among all compact
  attracting sets.
\end{enumerate}
Here $B(t)$  is the (well-defined) section at time $t\ge0$ of a set of bounded trajectories
$B$:
$$
B(t)=\{u(t),\ u\in B\}\subset\mathcal{H}^1.
$$
}
\end{definition}

We now formulate the main result of this work (in the $W^{1,2}$
case) in terms of the global and trajectory attractors. We first
consider the global attractor.

In Theorem \ref{s3the1} we
have shown  that the weak  trajectory attractor $\mathfrak{A}$ exists and is bounded in
$C_{b}(\mathbb{R}_{+};\mathcal{H}^{1})$.

We set
\begin{equation*}
\mathcal{A}:=\mathfrak{A}(0)=\{u(0),\ u\in \mathfrak{A}\}\subset \mathcal{H}%
^{1}.
\end{equation*}

\begin{theorem}\label{T:glob}
The set $\mathcal{A}$ is the global attractor.
\end{theorem}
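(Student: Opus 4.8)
The plan is to derive the three defining properties of the global attractor from the enstrophy equality of Corollary~\ref{s2cor1} combined with the structure of the already-constructed weak trajectory attractor $\mathfrak{A}=\Pi_+\mathcal{K}$. Recall that $\mathcal{A}=\mathfrak{A}(0)=\{v(0):\ v\in\mathcal{K}\}$, so every point of $\mathcal{A}$ is the time-zero section of a complete bounded trajectory. The engine of the whole argument is the enstrophy equality rewritten with the integrating factor $e^{2rt}$ and integrated \emph{backward} in time: for any $v\in\mathcal{K}$ and any $s>0$,
\[
\|v(0)\|_1^2=e^{-2rs}\|v(-s)\|_1^2+2\int_{-s}^0 e^{2rt}(v(t),g)_1\,dt .
\]
This is the standard energy-method device of~\cite{B,MRW,Rosa}: it converts $\ast$-weak convergence of a sequence of trajectories into convergence of the $\mathcal{H}^1$-norms of their sections, which in the Hilbert space $\mathcal{H}^1$ upgrades weak convergence to strong convergence.

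First I would prove that $\mathcal{A}$ is compact in $\mathcal{H}^1$. Let $v_n(0)\in\mathcal{A}$ with $v_n\in\mathcal{K}$. Since $\mathcal{K}$ is bounded in $C_b(\mathbb{R};\mathcal{H}^1)$ and sequentially closed in the $\ast$-weak topology on the line (Proposition~\ref{s3pro1} on each interval plus a diagonal argument), a subsequence satisfies $v_n\to v$ $\ast$-weakly in $L^\infty_{\mathrm{loc}}(\mathbb{R};\mathcal{H}^1)$ with $v\in\mathcal{K}$; in particular $v_n(0)\rightharpoondown v(0)$ in $\mathcal{H}^1$, whence $\|v(0)\|_1\le\liminf_n\|v_n(0)\|_1$. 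For the reverse inequality I apply the backward identity to each $v_n$: the section $\|v_n(-s)\|_1$ is bounded by $R_0$, and $t\mapsto e^{2rt}g$ is a fixed element of $L^2(-s,0;\mathcal{H}^1)$, so weak convergence in $L^2(-s,0;\mathcal{H}^1)$ lets me pass to the limit in the integral, giving
\[
\limsup_n\|v_n(0)\|_1^2\le e^{-2rs}R_0^2+2\int_{-s}^0 e^{2rt}(v(t),g)_1\,dt .
\]
Comparing with the backward identity for the limit $v$ (itself a weak solution, hence obeying Corollary~\ref{s2cor1}) and letting $s\to\infty$ yields $\limsup_n\|v_n(0)\|_1^2\le\|v(0)\|_1^2$. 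Thus $\|v_n(0)\|_1\to\|v(0)\|_1$, and weak convergence plus norm convergence give $v_n(0)\to v(0)$ strongly in $\mathcal{H}^1$, with $v(0)\in\mathcal{A}$.

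Next I would establish the attraction property by contradiction. If it failed there would exist $\delta_0>0$, $t_n\to\infty$ and $u_n\in B$ with $\mathrm{dist}_{\mathcal{H}^1}(u_n(t_n),\mathcal{A})\ge\delta_0$. Put $w_n:=T(t_n)u_n\in\mathcal{K}_+$. By the weak attraction of $\mathfrak{A}$ (Theorem~\ref{s3the1}) and its compactness in $\Theta_+^{\mathrm{loc,w}}$, a subsequence converges $w_n\to w$ in $\Theta_+^{\mathrm{loc,w}}$ with $w\in\mathfrak{A}$, so $w_n(0)=u_n(t_n)\rightharpoondown w(0)\in\mathcal{A}$. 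The key observation is that each $w_n$ is the restriction to $\mathbb{R}_+$ of the solution $u_n(t_n+\cdot)$, which is defined and uniformly bounded on $[-t_n,\infty)$; a diagonal extraction over the intervals $[-s,M]$ produces a limit $\bar w\in\mathcal{K}$ with $\bar w|_{\mathbb{R}_+}=w$, so that $w_n\to\bar w$ $\ast$-weakly on each $[-s,0]$ as well. Running the backward identity for $w_n$ on $[-s,0]$ exactly as above (now with $\bar w$ in place of $v$) gives $\|w_n(0)\|_1\to\|\bar w(0)\|_1=\|w(0)\|_1$, hence strong convergence $w_n(0)\to w(0)$ in $\mathcal{H}^1$; since $w(0)\in\mathcal{A}$ this contradicts $\mathrm{dist}_{\mathcal{H}^1}(w_n(0),\mathcal{A})\ge\delta_0$.

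Finally, minimality is soft. Let $\mathcal{A}'$ be any compact attracting set and fix $v\in\mathcal{K}$. The family $B:=\{v(\cdot-s)|_{\mathbb{R}_+}:\ s\ge0\}$ lies in $\mathcal{K}_+$ and is bounded in $C_b(\mathbb{R}_+;\mathcal{H}^1)$, and its section at any $t\ge0$ is $B(t)=\{v(\sigma):\ \sigma\le t\}\ni v(0)$; hence $\mathrm{dist}_{\mathcal{H}^1}(v(0),\mathcal{A}')\le\mathrm{dist}_{\mathcal{H}^1}(B(t),\mathcal{A}')\to0$, so $v(0)\in\mathcal{A}'$, and as $v$ was arbitrary, $\mathcal{A}\subseteq\mathcal{A}'$. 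The essential difficulty throughout lies in the two middle steps, namely the asymptotic compactness that turns the $\ast$-weak convergence supplied by the trajectory attractor into strong $\mathcal{H}^1$-convergence. This is exactly where the enstrophy equality for \emph{arbitrary} weak solutions (Corollary~\ref{s2cor1}), valid with no uniqueness hypothesis on $g$, is indispensable, and where the diagonal construction of the backward complete trajectory $\bar w$ requires care.
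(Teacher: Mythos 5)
Your proposal is correct and follows essentially the same route as the paper: both arguments hinge on integrating the $H^1$-norm (enstrophy) equality of Corollary~\ref{s2cor1} backward in time with the integrating factor $e^{2rt}$ along backward extensions to complete bounded trajectories, thereby upgrading the $\ast$-weak convergence supplied by the weak trajectory attractor $\mathfrak{A}=\Pi_+\mathcal{K}$ to convergence of $\mathcal{H}^1$-norms and hence to strong $\mathcal{H}^1$-convergence of the time-zero sections. The remaining differences are purely organizational (taking the limit in $n$ at fixed $s$ and then letting $s\to\infty$ instead of integrating over $[-h_n,0]$ directly, phrasing attraction as a contradiction, and spelling out the minimality step that the paper states in one line) and do not change the substance of the argument.
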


\begin{proof}
We fix a bounded trajectory set $B\subset \mathcal{K}_{+}$. Let
$h_{n}\rightarrow +\infty$,  and for each $n$  let  $u_{n}\in T(h_n)B$ be an arbitrary
$h_n$-shifted trajectory.
By Theorem \ref{s3the1}, since $\mathfrak{A}$ is the weak trajectory
attractor, passing to a subsequence of $\{h_{n}\}$ which we label the same,
we may assume that

\begin{enumerate}
\item there is $u\in \mathfrak{A}$ such that $u_{n}\rightharpoondown u\ $%
in the (weak) topology $\Theta _{+}^{\mathrm{loc,w}};$

\item $u_{n}(0)\rightharpoondown u(0)$ weakly in $\mathcal{H}^{1};$

\item there are weak solutions $U_{n}(t)$ of equation (\ref{s1r1}) defined
for $[-h_{n},+\infty )$ such that $U_{n}(t)=u_{n}(t)$ for all $t\geq 0$ and
\begin{equation}
\Vert U_{n}(t)\Vert _{\mathcal{H}^{1}}\leq C,\ \forall t\geq -h_{n},
\label{s4r3}
\end{equation}%
where $C=C(B)$ is independent of $n;$

\item there is a bounded complete trajectory $U\in $ $\mathcal{K}$ such that
$U_{n}\rightharpoondown U\ $in the (weak) topology $\Theta ^{%
\mathrm{loc,w}}$ and $U(t)=u(t)$ for all $t\geq 0.$
\end{enumerate}

To prove the theorem we have to show that
\begin{equation}\label{strongH1}
u_{n}(0)\rightarrow u(0)\ \text{strongly in }\mathcal{H}^{1}\ \text{as }%
n\rightarrow \infty .
\end{equation}%
In fact, since $u_{n}(0)\rightharpoondown u(0)\ $weakly in $\mathcal{H}^{1},$
it is sufficient to prove that
\begin{equation}\label{s4r4}
\Vert u_{n}(0)\Vert _{1}\rightarrow \Vert u(0)\Vert _{1}\ \text{as }%
n\rightarrow \infty.
\end{equation}%
The functions $U_{n}(t),t\geq -h_{n}$ satisfy the $H^1$-norm equality \eqref{s2r10}), that is,
$$
\frac{d}{dt}\Vert U_{n}(t)\Vert _{1}^{2}+2r\Vert U_{n}(t)\Vert
_{1}^{2}=2(U_{n}(t),g)_{1}.
$$
We multiply this equation by $e^{2rt}$ and integrate in $t$ from $-h_{n}$ to $0$:
\begin{equation}
\Vert U_{n}(0)\Vert _{1}^{2}=\Vert U_{n}(h_{n})\Vert
_{1}^{2}e^{-2rh_{n}}+2\int_{-h_{n}}^{0}(U_{n}(t),g)_{1}e^{2rt}dt.
\label{s4r6}
\end{equation}%
We want to pass to the limit $n\rightarrow \infty $ in this equality. Recall
that $U_{n}(t)$ are uniformly bounded (see (\ref{s4r3})) and
$U_{n}(\cdot)\rightharpoondown U(\cdot )$ $\ast $-weakly in $L_{\mathrm{loc}}^{\infty }(%
\mathbb{R};\mathcal{H}^{1})$. Then we have that%
\begin{equation}
\int_{-h_{n}}^{0}(U_{n}(t),g)_{1}e^{2rt}dt\rightarrow \int_{-\infty
}^{0}(U(t),g)_{1}e^{2rt}dt\text{ as }n\rightarrow \infty .  \label{s4r7}
\end{equation}
Using (\ref{s4r7}) in (\ref{s4r6}), where $\Vert U_{n}(h_{n})\Vert _{1}^{2}$
are uniformly bounded, and the condition $u_{n}(0)=U_{n}(0)$ we obtain that
\begin{equation}
\underset{n\rightarrow \infty }{\lim }\Vert u_{n}(0)\Vert
_{1}^{2}=2\int_{-\infty }^{0}(U(t),g)_{1}e^{2rt}dt.  \label{s4r8}
\end{equation}%
But the $H^1$-norm equality \eqref{s2r10} also holds for the complete trajectory
$U(t),t\in \mathbb{R}$:
\begin{equation*}
\frac{d}{dt}\Vert U(t)\Vert _{1}^{2}+2r\Vert U(t)\Vert
_{1}^{2}=2(U(t),g)_{1}.
\end{equation*}%
We multiply this equation by $e^{2rt}$ and integrate from $-\infty $ to $0$:
\begin{equation}
\Vert u(0)\Vert _{1}^{2}=2\int_{-\infty }^{0}(U(t),g)_{1}e^{2rt}dt\quad
\text{(recall that }u(0)=U(0)\text{).}  \label{s4r9}
\end{equation}%
Hence, equalities (\ref{s4r8}) and (\ref{s4r9}) imply convergence~\eqref{s4r4},
which gives the strong convergence $u_{n}(0)\rightarrow u(0)\ $in $\mathcal{H%
}^{1}$ as $n\rightarrow \infty$ and proves~\eqref{strongH1} since
\begin{equation*}
\lim_{n\rightarrow \infty }\Vert u_{n}(0)-u(0)\Vert
_{1}^{2}=\lim_{n\rightarrow \infty }\left( \Vert u_{n}(0)\Vert
_{1}^{2}+\Vert u(0)\Vert _{1}^{2}-2(u_{n}(0),u(0))_{1}\right)=0.
\end{equation*}%

Next, the compactness of $\mathcal{A}$ follows from the above argument if
we take $B=\mathfrak{A}$.
Finally, the minimality property follows from the fact that
every compact attracting set must contain $\mathcal {K}(0)=\mathfrak{A}(0)=\mathcal{A}$
(see~\eqref{s3r1}). The proof is complete.
\end{proof}

We now prove that the weak trajectory attractor constructed in
Theorem \ref{s3the1} is in fact a strong trajectory attractor.

We consider the trajectory space $\mathcal{K}_{+}\subset C_{b}(\mathbb{R}_{+};%
\mathcal{H}^{1})$ and the weak trajectory attractor $\mathfrak{A\subset }%
\mathcal{K}_{+}$ of the system (\ref{s1r1}). In Theorem \ref{s3the1}, we
have established that $\mathfrak{A}$ exists and is bounded in
$C_{b}(\mathbb{R}_{+};\mathcal{H}^{1})$. On the trajectory space $\mathcal{K}_{+}$ we
consider now the local strong topology $\Theta _{+}^{\mathrm{loc,s}}$. By
 definition, a sequence $\{z_{n}\}\subset \mathcal{K}_{+}$ converges to
 $z$ in $\Theta _{+}^{\mathrm{loc,s}}$ if, for every $M>0,$ $z_{n}(t,x)\rightarrow z(t,x)$
 strongly in $C([0,M];\mathcal{H}^{1})$, that is
\begin{equation*}
\max_{t\in \lbrack 0,M]}\Vert z_{n}(t,\cdot)-z(t,\cdot)\Vert _{\mathcal{H}%
^{1}}\rightarrow 0\ (n\rightarrow \infty ).
\end{equation*}%
It is clear that the topology $\Theta _{+}^{\mathrm{loc,s}}$ is metrizable
and this metric space is complete.

\begin{theorem} \label{s4the1} Let $g\in\mathcal{H}^1$. Then, the
trajectory attractor $\mathfrak{A}$ is compact in $\Theta _{+}^{\mathrm{loc,s%
}}$ and, for any trajectory set $B\subset \mathcal{K}_{+},$ bounded in $%
C_{b}(\mathbb{R}_{+};\mathcal{H}^{1})$, the $h$-shifted  set $T(h)B$ tends to
$\mathfrak{A}$ as $h\rightarrow \infty $ in $\Theta _{+}^{\mathrm{loc,s}}$,
 that is, for every $M>0$
\begin{equation}
\mathrm{dist}_{C([0,M];\mathcal{H}^{1})}\left( T(h)B,\mathfrak{A}\right)
\rightarrow 0\ \text{\rm as}\ h\rightarrow +\infty.  \label{s4r1}
\end{equation}
\end{theorem}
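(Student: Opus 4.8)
The plan is to upgrade the weak trajectory attraction established in Theorem~\ref{s3the1} to strong attraction in $C([0,M];\mathcal{H}^1)$ by exploiting the strong convergence of initial data already proved in Theorem~\ref{T:glob} together with the $H^1$-norm equality \eqref{s2r10}. The essential new ingredient compared to the weak case is that we now have pointwise-in-time strong convergence of the norms, and this must be promoted to uniform convergence on compact time intervals. First I would argue by contradiction: suppose strong attraction fails, so there is a bounded set $B\subset\mathcal{K}_+$, a number $M>0$, a sequence $h_n\to+\infty$, and trajectories $u_n\in T(h_n)B$ such that $\mathrm{dist}_{C([0,M];\mathcal{H}^1)}(u_n,\mathfrak{A})\ge\delta_0>0$ for some $\delta_0$.

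Next I would extract a limit. As in the proof of Theorem~\ref{T:glob}, each $u_n$ extends to a weak solution $U_n$ on $[-h_n,+\infty)$ satisfying the uniform bound \eqref{s4r3}, and by passing to a subsequence we obtain $u_n\rightharpoondown u$ in $\Theta_+^{\mathrm{loc,w}}$ with $u\in\mathfrak{A}$, together with a complete bounded trajectory $U\in\mathcal{K}$ with $U|_{[0,\infty)}=u$. The key computational step is to rerun the integration of \eqref{s2r10} but now keeping $t\in[0,M]$ as a free variable: for each fixed $t$,
\begin{equation*}
\Vert U_n(t)\Vert_1^2=\Vert U_n(h_n)\Vert_1^2 e^{-2r(t+h_n)}+2\int_{-h_n}^{t}(U_n(s),g)_1\,e^{2r(s-t)}\,ds,
\end{equation*}
and similarly for $U$ with the lower limit $-\infty$. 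The weak-$*$ convergence $U_n\rightharpoondown U$ in $L^\infty_{\mathrm{loc}}(\mathbb{R};\mathcal{H}^1)$ makes the integral term converge to its limit pointwise in $t$, while the boundary term vanishes; hence $\Vert u_n(t)\Vert_1\to\Vert u(t)\Vert_1$ for every $t\in[0,M]$. Combined with weak convergence $u_n(t)\rightharpoondown u(t)$ in $\mathcal{H}^1$ (which follows for each fixed $t$ from the weak convergence plus the uniform bound and the equicontinuity coming from Corollary~\ref{s3cor1}), this yields strong convergence $u_n(t)\to u(t)$ in $\mathcal{H}^1$ for each fixed $t$.

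The hard part will be upgrading this pointwise-in-$t$ strong convergence to \emph{uniform} convergence on $[0,M]$, which is exactly what the $C([0,M];\mathcal{H}^1)$ metric demands. Here I would invoke a standard compactness-and-equicontinuity argument. The family $\{U_n\}$ is bounded in $C([0,M];\mathcal{H}^1)$, and by Corollary~\ref{s3cor1} the derivatives $\partial_t U_n$ are uniformly bounded in $C([0,M];\mathcal{H}^{-\varepsilon})$, so the $U_n$ are uniformly equicontinuous as maps into $\mathcal{H}^{-\varepsilon}$. The scalar functions $t\mapsto\Vert U_n(t)\Vert_1^2$ are themselves uniformly Lipschitz in $t$ by the integral representation above (the integrand is bounded uniformly in $n$), so their pointwise convergence to the continuous limit $\Vert u(t)\Vert_1^2$ is automatically uniform on the compact interval $[0,M]$ by the classical Dini-type argument for equicontinuous sequences. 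Uniform convergence of the norms, together with weak convergence in $\mathcal{H}^1$ that is uniform on $[0,M]$ (via Arzel\`a--Ascoli applied in the weaker space $\mathcal{H}^{1-\delta}$ as in Corollary~\ref{s3cor2}), then forces
\begin{equation*}
\max_{t\in[0,M]}\Vert u_n(t)-u(t)\Vert_1^2=\max_{t\in[0,M]}\bigl(\Vert u_n(t)\Vert_1^2+\Vert u(t)\Vert_1^2-2(u_n(t),u(t))_1\bigr)\to0,
\end{equation*}
contradicting $\mathrm{dist}_{C([0,M];\mathcal{H}^1)}(u_n,\mathfrak{A})\ge\delta_0$. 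The compactness of $\mathfrak{A}$ in $\Theta_+^{\mathrm{loc,s}}$ follows by taking $B=\mathfrak{A}$ in the same argument, completing the proof.
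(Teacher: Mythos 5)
Your proposal is correct in substance but follows a genuinely different route from the paper, and one step needs more care than you give it. The paper obtains uniformity in $t$ not from equicontinuity of the norm functions but from a set-theoretic device: it applies the global-attractor result (Theorem~\ref{T:glob}) to the enlarged bounded trajectory set $B_M=\bigcup_{s\in[0,M]}T(s)B$, which immediately gives $\sup_{t\in[0,M]}\mathrm{dist}_{\mathcal{H}^1}(u_n(t),\mathcal{A})\to0$; it then takes a finite $\varepsilon$-net of the compact set $\mathcal{A}$ consisting of compactly supported functions, and splits $u_n-u$ into the orthogonal projection $P_N$ onto the span of the net (where Aubin--Lions local strong convergence applies and finite-dimensionality makes the $\mathcal{H}$- and $\mathcal{H}^1$-norms equivalent) and the complementary part $Q_N$ (uniformly small because all sections are uniformly close to $\mathcal{A}$). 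Your route instead integrates the $H^1$-equality \eqref{s2r10} up to an arbitrary $t$, observes that the functions $t\mapsto\|U_n(t)\|_1^2$ are equi-Lipschitz, and upgrades pointwise to uniform convergence of the norms; this is a legitimate replacement for the $B_M$ trick, and each ingredient you cite (the integral representation, the equi-Lipschitz bound, pointwise-to-uniform convergence for equicontinuous sequences) is correct.

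The place where you are too quick is the final polarization step. Uniform convergence of norms plus pointwise strong (hence weak) convergence does \emph{not} imply uniform strong convergence: in $\ell^2$, a rotation of $e_1$ towards $e_2$ driven by a shrinking moving bump has constant norm, converges pointwise, yet fails to converge uniformly. What your polarization identity actually requires is $\sup_{t\in[0,M]}|(u_n(t)-u(t),u(t))_1|\to0$, i.e.\ uniform weak $\mathcal{H}^1$-convergence tested against the \emph{moving} element $u(t)$, whereas ``Arzel\`a--Ascoli in $\mathcal{H}^{1-\delta}$'' as in Corollary~\ref{s3cor2} only yields strong convergence in $C([0,M];\mathcal{H}^{1-\delta}_{\mathrm{loc}})$ --- a weaker norm and local in space. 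To close the gap you must use that $K=\{u(t):t\in[0,M]\}$ is compact in $\mathcal{H}^1$ (by Corollary~\ref{s2cor1}, $u\in C([0,M];\mathcal{H}^1)$), take a finite $\varepsilon$-net of $K$, approximate the net by smooth compactly supported divergence-free fields $w_j$, and note that $(u_n(t)-u(t),w_j)_1=(u_n(t)-u(t),\,w_j-\Delta w_j)\to0$ uniformly in $t$ by the local convergence in $C([0,M];\mathcal{H}(B_0^R))$. This is entirely fillable --- but observe that in filling it you essentially reconstruct the paper's $\varepsilon$-net/projection argument: compactness of the limit object plus a finite reduction to compactly supported test functions is the unavoidable core of the proof in either organization.
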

\begin{proof}

To prove the theorem, we have to verify that
\begin{equation}
\Vert u_{n}(\cdot )\rightarrow u(\cdot )\Vert _{C([0,M];\mathcal{H}%
^{1})}\rightarrow 0  \label{s4r12}
\end{equation}%
for every fixed $M>0$, where the $u_{n}(\cdot )\in T(h_{n})B$ and
$u(\cdot )\in \mathfrak{A}$ were defined in the beginning of the proof of Theorem~\ref{T:glob}.
 We note that $B_{M}=\bigcup\nolimits_{s\in \lbrack 0,M]}T(s)B$ is
a bounded trajectory set. Then, by Theorem~\ref{T:glob}
\begin{equation*}
\mathrm{dist}_{\mathcal{H}^{1}}\left(B_{M}(h),\mathcal{A}\right)
\rightarrow 0\ \text {as}\  h\rightarrow +\infty.
\end{equation*}%
Now, since, $B_M(h)$ contains all the time  shifts of length $s\le M$
of the trajectories $u_n$, it follows that
\begin{equation}
\sup_{t\in \lbrack 0,M]}\mathrm{dist}_{\mathcal{H}^{1}}\left( u_{n}(t),%
\mathcal{A}\right) \rightarrow 0\ \text{as }n\rightarrow \infty .
\label{s4r13}
\end{equation}%
By Theorem~\ref{T:glob}, the set $\mathcal{A}$ is compact in $\mathcal{H}^{1}$.
Hence, for any $\varepsilon >0,$ there is a finite $\varepsilon $-net
$\{w_{j}\}_{j=1}^{N}$ for $\mathcal{A}$ in $\mathcal{H}^{1}$. By the density
of compactly supported functions in $H^1(\mathbb{R}^2)$ we can assume that
the supports of the $w_j$'s are contained in $B^0_R$, where
$R=R(N)$ is sufficiently large.  Let $P_{N}$
denote the orthogonal projection in $\mathcal{H}^{1}$ onto the
finite-dimensional linear span of $\{w_{j}\}_{j=1}^{N}$ and
let $Q_{N}=1-P_{N}.$
Then
\begin{equation}
\Vert Q_{N}u(t)\Vert _{1}\leq \varepsilon ,\ \forall t\in \lbrack 0,M],
\label{s4r14a}
\end{equation}
since $u\in \mathfrak{A}$. Due to (\ref{s4r13}),%
\begin{equation}
\underset{n\rightarrow \infty }{\lim \sup }\Vert Q_{N}u_{n}(t)\Vert _{1}\leq
\varepsilon ,\ \forall t\in \lbrack 0,M].  \label{s4r14b}
\end{equation}%
Recall that the sequence $u_{n}\rightharpoondown u\ \ast $-weakly
in $L^{\infty }(0,M;\mathcal{H}^{1}\mathcal{)}$ and $\partial
_{t}u_{n}\rightharpoondown \partial _{t}u\ \ast $-weakly in
$L^{\infty }(0,M; \mathcal{H}^{-1}\mathcal{)}$ for any $M>0.$ Then,
by Aubin--Lions--Simon theorem  $u_{n}\rightarrow u$ strongly in
$C([0,M];\mathcal{H}(B^0_r))$ for every $r<\infty$. We set $r=R(N)$
and obtain
$$
\sup_{t\in \lbrack 0,M]}\Vert P_{N}u_{n}(t)-P_{N}u(t)\Vert \rightarrow
0\ \text{as\ }n\rightarrow \infty,
$$
and, hence,
\begin{equation} \label{s4r15}
\sup_{t\in \lbrack 0,M]}\Vert P_{N}u_{n}(t)-P_{N}u(t)\Vert _{1}\rightarrow
0\ \text{as\ }n\rightarrow \infty ,
\end{equation}
because $P_{N}$ is a finite dimensional projection and the norms of $\mathcal{%
H}$ and $\mathcal{H}^{1}$ on $\mathrm{span}\{w_{j}\}$ are equivalent.
Finally, for an arbitrary $\varepsilon >0$ we chose the corresponding
projections $P_{N}$ and $Q_{N}$ for which we have (\ref{s4r14a}) and (\ref%
{s4r14b}). Then, thanks  to (\ref{s4r14b}) and (\ref{s4r15}), we find $%
n_{1}\in \mathbb{N}$ such that
$$
\sup_{t\in \lbrack 0,M]}\Vert Q_{N}u_{n}(t)\Vert _{1}\leq \varepsilon \quad
\text{and}\quad \sup_{t\in \lbrack 0,M]}\Vert P_{N}u_{n}(t)-P_{N}u(t)\Vert
_{1}\leq \varepsilon ,
$$
for all $ n\geq n_{1}$, which together with \eqref{s4r14a} and \eqref{s4r14b} implies that
$$
\aligned
&\sup_{t\in \lbrack 0,M]}\Vert u_{n}(t)-u(t)\Vert _{1}\le\\
& \sup_{t\in
\lbrack 0,M]}\left( \Vert P_{N}u_{n}(t)-P_{N}u(t)\Vert _{1}+\Vert
Q_{N}u_{n}(t)-Q_{N}u(t)\Vert _{1}\right) \leq \\
&\sup_{t\in \lbrack 0,M]}\Vert P_{N}u_{n}(t)-P_{N}u(t)\Vert
_{1}+
\sup_{t\in \lbrack 0,M]}\Vert Q_{N}u_{n}(t)\Vert _{1}+\\
&\qquad\qquad+\sup_{t\in
\lbrack 0,M]}\Vert Q_{N}u(t)\Vert _{1}\leq 3\varepsilon ,
\endaligned
$$
for all $n\geq n_{1}$. Therefore,\ we obtain (\ref{s4r12}) and
Theorem \ref{s4the1} is proved.
\end{proof}

Using the  strong convergence to $\mathfrak{A}$ and Corollary \ref{s2cor1}, we
obtain the following result.

\begin{corollary}
\label{s4cor1} For an arbitrary bounded trajectory set
$B\subset \mathcal{K}_{+}$ the corresponding time derivative set
\begin{equation*}
\partial _{t}B=\{\partial _{t}u,\ u\in B\}
\end{equation*}%
converges to $\partial _{t}\mathfrak{A}$ in the strong topology
$C_{loc}(\mathbb{R}_{+};\mathcal{H}^{-\varepsilon})$ for every
$\varepsilon>0$.
\end{corollary}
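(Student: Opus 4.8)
The plan is to deduce the corollary directly from the strong attraction in $C([0,M];\mathcal{H}^1)$ established in Theorem~\ref{s4the1}, exploiting the fact that along a trajectory the time derivative is a Lipschitz function of the trajectory itself once one passes from $\mathcal{H}^1$ to the weaker space $\mathcal{H}^{-\varepsilon}$. Recall from \eqref{s1r12} and Corollary~\ref{s2cor1} that every $u\in\mathcal{K}_{+}$ satisfies $\partial_{t}u=F(u):=-B(u,u)-ru+g$, and that $F$ is continuous from $\mathcal{H}^{1}$ into $\mathcal{H}^{-\varepsilon}$. Since the system is autonomous, the force $g$ is common to all trajectories and therefore cancels in any difference $F(u)-F(v)$.

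First I would record a quantitative Lipschitz estimate for $F$ on bounded subsets of $\mathcal{H}^{1}$. Writing, by bilinearity, $B(u,u)-B(v,v)=B(u-v,u)+B(v,u-v)$ and applying \eqref{s1r10eps} gives
\begin{equation*}
\|F(u)-F(v)\|_{-\varepsilon}\le C_{\varepsilon}\bigl(\|u\|_{1}+\|v\|_{1}\bigr)\|u-v\|_{1}+r\|u-v\|_{-\varepsilon}\le C\,\|u-v\|_{1},
\end{equation*}
where $C$ depends only on $\varepsilon$, $r$ and the uniform bound on the $\mathcal{H}^{1}$-norm of trajectories furnished by \eqref{s3r2} and \eqref{s4r3}. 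Consequently the induced map $u(\cdot)\mapsto\partial_{t}u(\cdot)=F(u(\cdot))$ is Lipschitz from bounded sets of $C([0,M];\mathcal{H}^{1})$ into $C([0,M];\mathcal{H}^{-\varepsilon})$.

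With this in hand I would take any bounded trajectory set $B\subset\mathcal{K}_{+}$ and any sequence $u_{n}\in T(h_{n})B$ with $h_{n}\to+\infty$ and $u_{n}\to u\in\mathfrak{A}$ in $C([0,M];\mathcal{H}^{1})$, as provided by Theorem~\ref{s4the1}. Applying the Lipschitz bound with $v=u$ yields
\begin{equation*}
\sup_{t\in[0,M]}\|\partial_{t}u_{n}(t)-\partial_{t}u(t)\|_{-\varepsilon}\le C\sup_{t\in[0,M]}\|u_{n}(t)-u(t)\|_{1}\longrightarrow0,
\end{equation*}
so that $\partial_{t}u_{n}\to\partial_{t}u$ strongly in $C([0,M];\mathcal{H}^{-\varepsilon})$. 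As this holds for every $M>0$ and every such sequence, it is precisely the assertion that $\mathrm{dist}_{C([0,M];\mathcal{H}^{-\varepsilon})}\bigl(\partial_{t}(T(h)B),\partial_{t}\mathfrak{A}\bigr)\to0$ as $h\to+\infty$, i.e.\ strong convergence in $C_{loc}(\mathbb{R}_{+};\mathcal{H}^{-\varepsilon})$.

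I do not expect a genuine obstacle: the only content beyond Theorem~\ref{s4the1} is the elementary Lipschitz estimate for $F$, immediate from the bilinear bound \eqref{s1r10eps} and the uniform $\mathcal{H}^{1}$-boundedness of trajectories on (and attracted to) $\mathfrak{A}$. The single point worth care is that the estimate be uniform in $t\in[0,M]$, which it is, precisely because the $\mathcal{H}^{1}$-bound along trajectories is uniform in time.
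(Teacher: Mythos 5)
Your proof is correct and follows essentially the same route the paper intends: the paper derives this corollary in one line from the strong attraction of Theorem~\ref{s4the1} together with the continuity of $u\mapsto F(u)=-B(u,u)-ru+g$ from $\mathcal{H}^{1}$ to $\mathcal{H}^{-\varepsilon}$ established via \eqref{s1r10eps} in Corollary~\ref{s2cor1}, which is precisely the Lipschitz-on-bounded-sets estimate you spell out.
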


\section{Additional regularity and $W^{1,p}$-attractors \label{Sec5}}

In this section, we show that the  trajectory attractor
$\mathfrak{A}$ for the dissipative Euler system (\ref{s1r1}) constructed above is more regular
when the external force $g$ has some additional regularity.

We  define the Banach space $\mathcal{E}_p$ for $2\le p\le\infty$:
\begin{equation}\label{Ep}
\mathcal{E}_p:=\{u,\ \func{div}u=0,\ \|u\|_{\mathcal{E}_p}:=
\|u\|_{L^2}+\|\func{curl}u\|_{L^p}<\infty\}.
\end{equation}

We observe that for $\func{div}u=0$ and $1<p<\infty$ we have the equivalence of the norms
$$
\|\nabla u\|_{L^p}\sim\|\func{curl} u\|_{L^p}.
$$
In fact,  since $|\func{curl}u(x)|\leq |\nabla u(x)|$ point-wise,
it remains   to show that $\|\nabla u\|_{L^p}\le
c(p)\|\func{curl}u\|_{L^p}$. The proof of this inequality (along
with the information on the constant $c(p)$) is contained in
Lemma~\ref{s5cor1}.

Therefore, in view of the Gagliardo--Nirenberg  inequality
$$
\|u\|_{L^p(\mathbb{R}^2)}\le
c_1(p)\|u\|_{L^2(\mathbb{R}^2)}^{\alpha}\|\nabla u\|_{L^p(\mathbb{R}^2)}^{1-\alpha},
\quad\alpha=1/(p-1),\quad 2\le p,
$$
the $\mathcal{E}_p$-norm dominates the $W^{1,p}$-norm for $2\le p<\infty$:
\begin{equation}\label{Epdom}
\|u\|_{L^p(\mathbb{R}^2)}+\|\nabla u\|_{L^p(\mathbb{R}^2)}\le
c_2(p)\left(\|u\|_{L^2(\mathbb{R}^2)}+\|\func{curl} u\|_{L^p(\mathbb{R}^2)}\right).
\end{equation}

We need the
following theorem, which similarly to Theorem \ref{s2the1} essentially uses the
results in~\cite{DiP-Lio}.

\begin{theorem}
\label{s5the1} Let  $g\in \mathcal{H}^{1}\cap \mathcal{E}_p$ and
let $\omega _{0}\in L^{2}\cap L^p$  for $p\ge 2.$ Then the weak
solution~$\omega$ of the transport equation \eqref{s2r8} with
initial condition $\omega(0)=\omega _{0}$ and a given vector field
$u(t,x)\in L^{\infty }(0,T;\mathcal{H}^{1})$ satisfies the equation
\begin{equation}
\frac{1}{p}\frac{d}{dt}\|\omega (t)\|_{L^{p}}^{p}+r\|\omega(t)\| _{L^{p}}^{p}=
\int_{\mathbb{R}^{2}}\omega (t,x)\cdot |\omega
(t,x)|^{p-2}\func{curl}g(x)dx,  \label{s5r00}
\end{equation}%
and in addition to $\omega \in C_{b}(\mathbb{R}_{+};L^{2})$ we have
$\omega \in  C_{b}(\mathbb{R}_{+};L^{p})$.
\end{theorem}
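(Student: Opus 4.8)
The plan is to follow the strategy of Theorem~\ref{s2the1} (the case $p=2$), replacing the quadratic renormalizer $s\mapsto s^2/2$ by the convex function $\beta(s)=|s|^p/p$ and invoking the same DiPerna--Lions renormalization theory \cite{DiP-Lio}. First I would record the hypotheses needed for that theory: the transport field $u$ is divergence free and lies in $L^\infty(0,T;\mathcal{H}^1)\subset L^\infty(0,T;W^{1,2})$, and by the Sobolev embedding $\mathcal{H}^1\subset L^q$ for every $q\ge2$ one has $u/(1+|x|)\in L^1(\mathbb{R}^2)+L^\infty(\mathbb{R}^2)$, so the growth condition at infinity required in \cite{DiP-Lio} is met. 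Since $g\in\mathcal{E}_p$ gives $\func{curl}g\in L^p$, and $\omega\in L^\infty(0,T;L^2\cap L^p)$, the renormalization property applies to $\beta(s)=|s|^p/p$, which is $C^1$ with $\beta'(s)=|s|^{p-2}s$ and $|\beta'(s)|=|s|^{p-1}$.

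Applying the renormalization to \eqref{s2r8} then yields, in the sense of distributions,
\begin{equation*}
\partial_t\frac{|\omega|^p}{p}+\func{div}\!\left(u\,\frac{|\omega|^p}{p}\right)+r|\omega|^p=|\omega|^{p-2}\omega\,\func{curl}g,
\end{equation*}
where I have used $\func{div}u=0$ to rewrite $(u,\nabla)\beta(\omega)=\func{div}(u\beta(\omega))$ and $\omega\beta'(\omega)=|\omega|^p$. The right-hand side lies in $L^\infty(0,T;L^1)$ by H\"older's inequality, since $|\omega|^{p-1}\in L^{p/(p-1)}$ and $\func{curl}g\in L^p$. Integrating over $\mathbb{R}^2$ kills the divergence term and produces the balance law \eqref{s5r00}; because its right-hand side is in $L^\infty_{\mathrm{loc}}(0,T)$, the function $\|\omega(t)\|_{L^p}^p$ is absolutely continuous.

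Next I would extract the dissipative bound. Estimating the right-hand side of \eqref{s5r00} by $\|\func{curl}g\|_{L^p}\|\omega\|_{L^p}^{p-1}$ and dividing by $\|\omega\|_{L^p}^{p-1}$ gives $\frac{d}{dt}\|\omega\|_{L^p}+r\|\omega\|_{L^p}\le\|\func{curl}g\|_{L^p}$, whence by Gronwall
\begin{equation*}
\|\omega(t)\|_{L^p}\le\|\omega_0\|_{L^p}\,e^{-rt}+r^{-1}\|\func{curl}g\|_{L^p},
\end{equation*}
so $\omega\in L^\infty(\mathbb{R}_+;L^p)$ with a bound independent of $t$. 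For the continuity statement I would then argue exactly as in Corollaries~\ref{s1cor1} and~\ref{s2cor1}: the transport equation shows $\partial_t\omega=\func{curl}g-\func{div}(u\omega)-r\omega$ lies in a negative-order space, so $\omega$ is weakly continuous with values in $L^p$; combining this weak continuity with the continuity of the norm $t\mapsto\|\omega(t)\|_{L^p}$ just established, and using the uniform convexity of $L^p$ for $1<p<\infty$, upgrades weak continuity to strong continuity $\omega\in C([0,M];L^p)$ for every $M$, and the uniform bound gives $\omega\in C_b(\mathbb{R}_+;L^p)$.

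The step I expect to be the main obstacle is the justification that the divergence term vanishes after integration over the whole plane. On a bounded domain this is immediate, but on $\mathbb{R}^2$ one cannot integrate by parts freely against $u\,|\omega|^p/p$ using only $\omega\in L^2\cap L^p$, since $u\,|\omega|^p$ need not be globally integrable. The resolution is to multiply by a cutoff $\chi_R$, so that the only surviving term is $\tfrac1p\int(u\cdot\nabla\chi_R)|\omega|^p\,dx$, supported on $R\le|x|\le2R$, and then to invoke the growth hypothesis verified above to send $R\to\infty$. This cutoff mechanism is precisely what is already built into Theorem~II.2 of \cite{DiP-Lio}, so in practice the whole balance law is inherited from that theorem, the only new ingredients relative to the $p=2$ case being the choice of renormalizer $\beta(s)=|s|^p/p$ and the H\"older bound on the forcing term $\beta'(\omega)\func{curl}g$.
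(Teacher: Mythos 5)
Your proposal is correct and takes essentially the same route as the paper: the paper's proof consists of citing \cite[Theorem II.2 and equation (26)]{DiP-Lio} and verifying the single hypothesis $u\in L^{\infty}(0,T;\mathcal{H}^{1})\subset L^{1}(0,T;W^{1,q}_{\mathrm{loc}}(\mathbb{R}^2))$ with $q=p/(p-1)\le 2$, which is exactly the DiPerna--Lions renormalization with $\beta(s)=|s|^p/p$ that you spell out. The cutoff argument at infinity, the H\"older bound on the forcing term, and the resulting balance law are all subsumed in that citation, so your more detailed write-up simply unpacks what the paper leaves to the reference.
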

\begin{proof} As before
the assertion of this theorem follows from the proof of
\cite[Theorem II.2 and equation (26)]{DiP-Lio}. The hypotheses of
the cited theorem are satisfied since our vector field $u$ belongs
to $L^{\infty}(0,T;\mathcal{H}^1)$ and therefore $u\in
L^{\infty}(0,T;W^{1,q}_\mathrm{loc}(\mathbb{R}^2)) \subset
L^{1}(0,T;W^{1,q}_\mathrm{loc}(\mathbb{R}^2))$ for all
$q=p'=p/(p-1)\le2$, as required in \cite{DiP-Lio}.
\end{proof}

\begin{theorem}\label{s5pro1}
Let $g\in \mathcal{H}^{1}\cap \mathcal{E}_p$ for some $2\le p<\infty$. Then the trajectory
attractor $\mathfrak{A}$ of
 system \eqref{s1r1} belongs to $C_{b}(\mathbb{R}_{+};\mathcal{E}_p)$, and
for every $u\in \mathfrak{A}$ the following estimate holds
\begin{equation} \label{s5r0}
\Vert u\Vert _{C_{b}(\mathbb{R}_{+};\mathcal{E}_p)}\leq \frac{1}{r}\Vert
g\Vert _{\mathcal{E}_p}.
\end{equation}
\end{theorem}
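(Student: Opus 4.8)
The plan is to combine the $L^p$-enstrophy equality from Theorem~\ref{s5the1} with the absorbing/attraction machinery already established for the weak trajectory attractor $\mathfrak{A}$. The key observation is that $\mathfrak{A}=\Pi_+\mathcal{K}$ consists of restrictions of complete bounded trajectories $U(t)$, $t\in\mathbb{R}$, and that each such $U$ is a weak solution on the whole line. For a complete trajectory, the vorticity $\omega=\func{curl}U$ solves the transport equation \eqref{s2r8}, and by Theorem~\ref{s5the1} the function $\|\omega(t)\|_{L^p}^p$ is absolutely continuous and satisfies \eqref{s5r00}. Thus the strategy is to integrate this $L^p$-differential equality along a complete trajectory from $-\infty$ to a fixed time, exactly mirroring the $H^1$-norm integration performed in \eqref{s4r6}--\eqref{s4r9} in the proof of Theorem~\ref{T:glob}, and thereby obtain the a~priori bound \eqref{s5r0} on the $L^p$-norm of the vorticity uniformly over $\mathfrak{A}$.

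First I would fix $u\in\mathfrak{A}$, so $u=\Pi_+U$ for some $U\in\mathcal{K}$, and apply Theorem~\ref{s5the1} to $\omega=\func{curl}U$ to get
\begin{equation*}
\frac{1}{p}\frac{d}{dt}\|\omega(t)\|_{L^p}^p+r\|\omega(t)\|_{L^p}^p=
\int_{\mathbb{R}^2}\omega(t,x)|\omega(t,x)|^{p-2}\func{curl}g(x)\,dx.
\end{equation*}
The right-hand side is estimated by H\"older's inequality with exponents $p$ and $p'=p/(p-1)$:
\begin{equation*}
\left|\int_{\mathbb{R}^2}\omega|\omega|^{p-2}\func{curl}g\,dx\right|\le
\|\omega\|_{L^p}^{p-1}\|\func{curl}g\|_{L^p}.
\end{equation*}
Writing $y(t)=\|\omega(t)\|_{L^p}$, this yields the scalar differential inequality $y'+ry\le\|\func{curl}g\|_{L^p}$ after dividing by $y^{p-1}$ (handling the set where $y=0$ by a standard approximation, e.g.\ differentiating $(\,y^p+\delta)^{1/p}$ and letting $\delta\to0$). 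Multiplying by $e^{rt}$, integrating from $-h$ to $t=0$, and using the uniform bound $\|\omega(t)\|_{L^2}\le r^{-1}\|g\|_1$ (hence finite $L^2$-norm, but more importantly the \emph{finiteness} of $y(-h)$ coming from $U\in\mathcal{K}\subset C_b(\mathbb{R};\mathcal{H}^1)$), I would let $h\to\infty$ so that the term $y(-h)^p e^{-prh}$ or $y(-h)e^{-rh}$ drops out, leaving
\begin{equation*}
\|\func{curl}U(0)\|_{L^p}\le\frac{1}{r}\|\func{curl}g\|_{L^p}.
\end{equation*}
Combining this with the already-established $L^2$ bound $\|U(0)\|_{L^2}\le r^{-1}\|g\|\le r^{-1}\|g\|_{\mathcal{E}_p}$ gives $\|U(0)\|_{\mathcal{E}_p}\le r^{-1}\|g\|_{\mathcal{E}_p}$. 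Since the equation is autonomous and $T(h)\mathfrak{A}=\mathfrak{A}$, the same bound holds at every time $t\ge0$, which is precisely \eqref{s5r0}, and membership in $C_b(\mathbb{R}_+;\mathcal{E}_p)$ follows from the continuity statement $\omega\in C_b(\mathbb{R}_+;L^p)$ in Theorem~\ref{s5the1} together with $u\in C_b(\mathbb{R}_+;\mathcal{H}^1)$.

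The \textbf{main obstacle} I anticipate is making the passage from the $L^p$-differential equality \eqref{s5r00} to the Gronwall-type integral bound fully rigorous, specifically the division by $y^{p-1}$ near the zeros of $y(t)$ and the justification that the boundary term at $-h$ genuinely vanishes. For the former, the clean route is to avoid dividing altogether: multiply \eqref{s5r00} by $e^{prt}$ to write $\frac{d}{dt}\bigl(e^{prt}\|\omega\|_{L^p}^p\bigr)=p\,e^{prt}\int\omega|\omega|^{p-2}\func{curl}g\,dx\le p\,e^{prt}\|\omega\|_{L^p}^{p-1}\|\func{curl}g\|_{L^p}$, integrate, and then absorb the resulting $\|\omega\|_{L^p}^{p-1}$ factor via Young's inequality to close the estimate in $\|\omega\|_{L^p}^p$; this sidesteps the singularity entirely. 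For the boundary term, the essential point — already exploited in the proof of Theorem~\ref{T:glob} — is that completeness and the kernel bound give a uniform-in-$h$ control on $\|\omega(-h)\|$, so that the exponentially weighted boundary contribution tends to zero; I would simply import that argument verbatim, replacing the $H^1$-norm by the $L^p$-norm of the vorticity throughout.
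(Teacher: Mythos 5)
There is a genuine gap, and it sits exactly at the point your plan glosses over. You propose to apply Theorem~\ref{s5the1} directly to $\omega=\func{curl}U$ for a complete bounded trajectory $U\in\mathcal{K}$, integrate \eqref{s5r00} from $-h$ to $0$, and kill the boundary term using a ``uniform-in-$h$ control on $\|\omega(-h)\|$''. But Theorem~\ref{s5the1} requires the initial vorticity to lie in $L^2\cap L^p$, and for $U\in\mathcal{K}$ the only available information is $U\in C_b(\mathbb{R};\mathcal{H}^1)$, i.e.\ $\omega(-h)\in L^2$; nothing in the preceding sections gives $\omega(-h)\in L^p$ for $p>2$ (the vorticity is merely the curl of an $H^1$ function, so it has no integrability beyond $L^2$). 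Thus the $L^p$ equality \eqref{s5r00} cannot even be invoked for $\omega$ itself. Moreover, even granting finiteness of $y(-h)=\|\omega(-h)\|_{L^p}$ for each $h$, the boundary term $y(-h)e^{-rh}$ vanishes only if $y(-h)$ grows subexponentially in $h$, and the kernel bound controls the $L^2$ norm of $\omega(-h)$, not its $L^p$ norm. Your argument is therefore circular: the uniform $L^p$ bound on the vorticity along the attractor is precisely the conclusion of the theorem, and you use it (at $t=-h$) as an input.

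The paper circumvents exactly this difficulty with a decomposition that your proposal lacks. For each $\tau>0$ it solves the auxiliary inhomogeneous transport problem $\partial_t W^\tau+(U,\nabla)W^\tau+rW^\tau=\func{curl}g$, $W^\tau|_{t=-\tau}=0$. Since the initial datum $0$ lies in $L^2\cap L^p$, Theorem~\ref{s5the1} does apply to $W^\tau$, and your H\"older--Young--Gronwall computation (which is fine in itself, and is what the paper carries out) gives $\|W^\tau(t)\|_{L^p}\le r^{-1}\|\func{curl}g\|_{L^p}$ uniformly in $\tau$ and $t\ge-\tau$. The remainder $V^\tau=\omega-W^\tau$ solves the homogeneous transport equation with $L^2$ datum $\omega(-\tau)$, so the $L^2$ theory (Theorem~\ref{s2the1}) together with the uniform kernel bound gives $\|V^\tau(t)\|\le\|\omega(-\tau)\|e^{-r(t+\tau)}\to0$ as $\tau\to\infty$. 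Then, for fixed $t$, $W^\tau(t)\rightharpoondown\omega'$ weakly in $L^p$ along a subsequence, while $W^\tau(t)=\omega(t)-V^\tau(t)\to\omega(t)$ strongly in $L^2$, which identifies $\omega(t)=\omega'\in L^p$ with $\|\omega(t)\|_{L^p}\le r^{-1}\|\func{curl}g\|_{L^p}$; the $L^2$ bound on $u$ then yields \eqref{s5r0}. This splitting---putting the forcing into a solution with zero (hence $L^p$) initial data, and the unknown $L^2$-only initial data into an exponentially decaying homogeneous part---is the missing idea; without it your Gronwall step has no admissible starting point.
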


\begin{proof}
Let $u\in \mathfrak{A}$. According to (\ref{s3r1}), there is a bounded
complete trajectory $U\in \mathcal{K}$ such that $\Pi _{+}U=u.$ The
corresponding vorticity function $\omega (t,x)=\func{curl}U(t,x)$ satisfies
the equation%
\begin{equation*}
\partial _{t}\omega +(U,\nabla )\omega +r\omega =\func{curl}g(x),\ t\in
\mathbb{R}.  \label{s5r1}
\end{equation*}%
We consider the following Cauchy problem for $\tau \geq 0$:
\begin{equation*}\label{s5r2}
\aligned
\partial _{t}W^{\tau }+(U,\nabla )W^{\tau }+rW^{\tau }=\func{curl}g(x),
\quad W^{\tau }|_{t=-\tau }=0.
\endaligned
\end{equation*}
Since $\func{curl}g(x)\in L^{2}\cap L^{p}$, it follows from Theorem~\ref{s5the1}
that  this linear transport problem
 has a unique weak solution
$W^{\tau }\in C_{b}([-\tau,\infty );L^{2})\cap C_{b}([-\tau ,\infty );L^{p})$
satisfying for $t\ge \tau$  the following equation:
$$
\frac{1}{p}\frac{d}{dt}\Vert W^\tau(t)\Vert _{L^{p}}^{p}+r\Vert W^\tau(t)\Vert _{L^{p}}^{p}=
\int_{\mathbb{R}^{2}}W^\tau(t,x)\cdot |W^\tau(t,x)|^{p-2}\func{curl}g(x)dx.
$$
Using  H\"older's inequality  we obtain
\begin{equation*}
\frac{1}{p}\frac{d}{dt}\Vert W^\tau\Vert _{L^{p}}^{p}+r\Vert W^\tau\Vert
_{L^{p}}^{p}\leq \Vert \func{curl}g\Vert _{L^{p}}\Vert W^\tau\Vert _{L^{p}}^{p-1}.
\label{s5r5}
\end{equation*}%
Using  Young's inequality with parameter $\delta >0$:
$$ab\leq {\delta ^{-\frac{p}{q}}a^{p}}/p+{\delta b^{q}}/{q}$$ with
$a=\|\func{curl}g(x)\|_{L^{p}}$, $b=\|W^\tau\|_{L^{p}}^{p-1}$,  and $\delta=r$,
we obtain
\begin{equation*}
\frac{d}{dt}\Vert W^\tau\Vert _{L^{p}}^{p}+pr\Vert W^\tau\Vert _{L^{p}}^{p}\leq
r^{-(p-1)}\Vert \func{curl}g(x)\Vert _{L^{p}}^{p}+(p-1)r\Vert W^\tau\Vert
_{L^{p}}^{p},
\end{equation*}%
that is%
\begin{equation*}
\frac{d}{dt}\Vert W^\tau\Vert _{L^{p}}^{p}+r\Vert W^\tau\Vert _{L^{p}}^{p}\leq
r^{-(p-1)}\Vert \func{curl}g(x)\Vert _{L^{p}}^{p}.  \label{s5r6}
\end{equation*}%
Multiplying  both sides by $e^{rt}$
and integrating  over $[-\tau,t]$ we obtain the inequality for $t\ge -\tau$
$$
\Vert W^\tau(t)\Vert _{L^{p}}^{p}\leq \Vert W^\tau(-\tau )\Vert
_{L^{p}}^{p}e^{-r(t+\tau )}+(1-e^{-r(t+\tau )})r^{-p}\Vert \func{curl}g\Vert
_{L^{p}}^{p}.
$$
Since $W^\tau(-\tau )= 0$, this gives
\begin{equation}
\Vert W^\tau(t)\Vert _{L^{p}}\leq r^{-1}\Vert \func{curl}g\Vert
_{L^{p}},\quad t\ge -\tau. \label{s5r7}
\end{equation}%

Consider now the function $V^{\tau }(x,t)=\omega (t,x)-W^{\tau }(t,x)$,
which is the solution of the following linear homogeneous  transport problem%
$$
\partial _{t}V^{\tau }+(U,\nabla )V^{\tau }+rV^{\tau }=0,\quad
V^{\tau }|_{t=-\tau } =U(-\tau ),\quad  t\geq -\tau.
$$
>From \cite{DiP-Lio} (see also Theorem \ref{s2the1}), it follows that this
problem   has a unique solution satisfying the equation
\begin{equation*}
\frac{1}{2}\frac{d}{dt}\Vert V^{\tau }(t)\Vert ^{2}+r\Vert V^{\tau }(t)\Vert
^{2}=0,\ t\geq -\tau.
\end{equation*}%
Since  $U\in C_{b}(\mathbb{R};\mathcal{H}^{1})$, we have
\begin{equation}\label{to0}
\Vert V^{\tau }(t)\Vert ^{2}\leq \Vert V^{\tau }(-\tau )\Vert
^{2}e^{-2r(t+\tau )}=\Vert U^{\tau }(-\tau )\Vert ^{2}e^{-2r(t+\tau
)}\rightarrow 0,
\end{equation}
as $\tau\to\infty$.
We see from~\eqref{s5r7} and \eqref{to0} that for a fixed $t$ we have
$$
\aligned
&\omega-V^\tau\to \omega'\quad  \text{weakly in}\ L^p,\\
&\omega-V^\tau\to \omega\quad  \text{strongly in}\ L^2,
\endaligned
$$
as $\tau\to\infty$, where $\omega'\in L^p$ and
$\|\omega'\| _{L^{p}}\le r^{-1}\|\func{curl}g\|_{L^p}$.

Using  here a simple fact that if
$a_n\to a$ weakly in $L^2$ and $a_n\to a'$ weakly in $L^p$,
then $a=a'\in L^2\cap L^p$, we obtain that $\omega (t)$ also belongs to $L^{p}$ and
for all $t\ge0$ satisfies
\begin{equation} \label{s5r10}
\Vert \omega (t)\Vert _{L^{p}}\leq r^{-1}\Vert \func{curl}g\Vert _{L^{p}}.
\end{equation}%
To derive (\ref{s5r0}) from (\ref{s5r10}), we recall that every $u\in\mathfrak{A}$
 satisfies as before the inequality
\begin{equation*}
\Vert u(t)\Vert _{L^{2}}\leq r^{-1}\Vert g\Vert _{L^{2}},\quad t\geq 0,
\end{equation*}%
(see (\ref{s1r17})) and that the norm in $\mathcal{E}_p$ is given by~\eqref{Ep}.
\end{proof}

The next result extends Theorem~\ref{s4the1} to the $W^{1,p}$-case
(more precisely, to the $\mathcal{E}_p$-case).

\begin{theorem}\label{T:Lp} Let $g\in \mathcal{H}^{1}\cap \mathcal{E}_p$,  $2\le p<\infty$.
Then the trajectory attractor $\mathfrak{A}$ is compact in
$C_{\mathrm{loc}}(\mathbb{R}_{+};\mathcal{H}^{1}\cap \mathcal{E}_p)$ and if a trajectory set
$B\subset \mathcal{K} _{+}$ is bounded in
$C_{b}(\mathbb{R}_{+};\mathcal{H}^{1}\cap \mathcal{E}_p)$, then
for every $M>0$
\begin{equation}
\mathrm{dist}_{C([0,M];\mathcal{H}^{1}\cap \mathcal{E}_p)}\left( T(h)B,\mathfrak{A}\right)
\rightarrow 0\quad\text{\rm as}\  h\to +\infty.  \label{s5r11}
\end{equation}
\end{theorem}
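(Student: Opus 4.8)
The plan is to upgrade the strong $\mathcal{H}^1$-attraction of Theorem~\ref{s4the1} to $\mathcal{H}^1\cap\mathcal{E}_p$ by isolating the only genuinely new ingredient, namely the convergence of the vorticities in $L^p$. Since the $\mathcal{H}^1\cap\mathcal{E}_p$-norm controls $\|u\|_{L^2}$, $\|\func{curl}u\|_{L^2}$ and $\|\func{curl}u\|_{L^p}$, and the first two are already handled by Theorem~\ref{s4the1}, it suffices to prove that, for $h_n\to+\infty$ and $u_n\in T(h_n)B$ converging to $u\in\mathfrak{A}$ in $\Theta_+^{\mathrm{loc,s}}$, the vorticities $\omega_n=\func{curl}u_n$ converge to $\omega=\func{curl}u$ strongly in $C([0,M];L^p)$. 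Because $L^p$ is uniformly convex for $2\le p<\infty$, strong convergence in $L^p$ at a fixed time follows from weak convergence together with convergence of the $L^p$-norms, so the whole problem reduces to establishing these two facts and then making them uniform in $t\in[0,M]$.

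First I would set things up exactly as in the proof of Theorem~\ref{T:glob}: extend each $u_n$ to a complete trajectory $U_n\in\mathcal{K}$ on $[-h_n,\infty)$ (a shift of the original trajectory in $B$, hence uniformly bounded in $\mathcal{E}_p$ by the hypothesis on $B$), with $U_n\rightharpoondown U\in\mathcal{K}$ in $\Theta^{\mathrm{loc,w}}$ and $\Pi_+U=u$. Writing $\Omega_n=\func{curl}U_n$ and $\Omega=\func{curl}U$, Theorems~\ref{s5pro1} and \ref{s5the1} guarantee $\Omega\in C_b(\mathbb{R};L^p)$ and provide the $L^p$-enstrophy equality for both $\Omega_n$ and $\Omega$. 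Strong $L^2$-convergence at the section (from Theorem~\ref{T:glob}) together with the uniform $L^p$-bound gives $\omega_n(0)\rightharpoondown\omega(0)$ weakly in $L^p$; moreover, a.e.\ convergence on subsequences (coming from strong $L^2_{\mathrm{loc}}$-convergence in space-time) plus $L^{p'}$-boundedness yields, via the weak-convergence lemma used earlier, that $|\Omega_n|^{p-2}\Omega_n\rightharpoondown|\Omega|^{p-2}\Omega$ weakly in $L^{p'}$.

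The convergence of the norms is then obtained by integrating the $L^p$-equality \eqref{s5r00} for $\Omega_n$ against the weight $e^{prt}$ over $[-h_n,0]$, mirroring \eqref{s4r6}--\eqref{s4r9}:
\begin{equation*}
\|\Omega_n(0)\|_{L^p}^p=\|\Omega_n(-h_n)\|_{L^p}^p e^{-prh_n}
+p\int_{-h_n}^0 e^{prt}\Big(\int_{\mathbb{R}^2}|\Omega_n|^{p-2}\Omega_n\,\func{curl}g\,dx\Big)dt.
\end{equation*}
As $n\to\infty$ the boundary term vanishes, and the time integral converges, by dominated convergence (the inner integrals are uniformly bounded and $e^{prt}$ is integrable on $(-\infty,0)$) together with the weak $L^{p'}$-convergence of $|\Omega_n|^{p-2}\Omega_n$ paired against $\func{curl}g\in L^p$, to $p\int_{-\infty}^0 e^{prt}\big(\int|\Omega|^{p-2}\Omega\,\func{curl}g\,dx\big)dt$. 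Integrating the $L^p$-equality for $\Omega$ over $(-\infty,0]$ — the boundary term at $-\infty$ vanishing since $\Omega\in C_b(\mathbb{R};L^p)$ — identifies this limit with $\|\Omega(0)\|_{L^p}^p=\|\omega(0)\|_{L^p}^p$. Hence $\|\omega_n(0)\|_{L^p}\to\|\omega(0)\|_{L^p}$, and uniform convexity gives strong $L^p$-convergence at $t=0$; applied to $B_M=\bigcup_{s\in[0,M]}T(s)B$ this yields compactness of $\mathcal{A}$ in $\mathcal{H}^1\cap\mathcal{E}_p$ and $\sup_{t\in[0,M]}\mathrm{dist}_{\mathcal{H}^1\cap\mathcal{E}_p}(u_n(t),\mathcal{A})\to0$.

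Finally, to obtain the uniform-in-time statement \eqref{s5r11} I would argue by contradiction rather than by the finite-dimensional projection scheme of Theorem~\ref{s4the1}, since $L^p$ carries no orthogonal structure. If $\sup_{t\in[0,M]}\|\omega_n(t)-\omega(t)\|_{L^p}\not\to0$, pick $t_n\to t_*\in[0,M]$ with $\|\omega_n(t_n)-\omega(t_n)\|_{L^p}\ge\delta$; the $C([0,M];L^2)$-convergence from Theorem~\ref{s4the1} gives $\omega_n(t_n)\to\omega(t_*)$ in $L^2$, hence weakly in $L^p$, while integrating \eqref{s5r00} over $[-h_n,t_n]$ and passing to the limit as above gives $\|\omega_n(t_n)\|_{L^p}\to\|\omega(t_*)\|_{L^p}$; uniform convexity then forces $\omega_n(t_n)\to\omega(t_*)$ strongly in $L^p$, which with $\omega(t_n)\to\omega(t_*)$ (continuity of $U$ in $L^p$) contradicts the choice of $t_n$. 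I expect the main obstacle to be the justification of the limit in the nonlinear forcing term $\int|\Omega_n|^{p-2}\Omega_n\,\func{curl}g\,dx$, both pointwise in $t$ and uniformly enough to pass the time integration, as this is precisely where the only available strong convergence ($L^2_{\mathrm{loc}}$, yielding a.e.\ convergence) and the $L^{p'}$-weak compactness of $|\Omega_n|^{p-2}\Omega_n$ must be balanced carefully.
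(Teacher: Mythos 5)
Your proposal is correct, and its core coincides with the paper's: the paper proves the sectionwise statement as Theorem~\ref{T:globWp} by integrating the weighted $L^p$-enstrophy equality \eqref{s5r00} over $[-h_n,0]$, identifying the limit with $\|\omega(0)\|_{L^p}^p$ through the same equality for the complete trajectory integrated over $(-\infty,0]$, and then converting weak $L^p$ convergence plus convergence of norms into strong convergence (the paper cites the Radon--Riesz theorem from Riesz--Nagy where you invoke uniform convexity --- the same fact). You diverge from the paper in two places, both defensibly. First, for the uniform-in-time attraction \eqref{s5r11} the paper offers only the remark that the proof is ``similar to the proof of Theorem~\ref{s4the1}''; but that proof uses finite-dimensional orthogonal projections in $\mathcal{H}^1$ and does not transfer verbatim to $L^p$, exactly as you observe. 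Your contradiction argument with $t_n\to t_*$ --- combining sectionwise norm convergence from \eqref{s5r00} integrated over $[-h_n,t_n]$, weak convergence from the already-proved $C([0,M];\mathcal{H}^1)$ attraction, uniform convexity, and the $L^p$-continuity in time of attractor trajectories (Theorems~\ref{s5the1} and \ref{s5pro1}) --- is a clean substitute valid in any uniformly convex space. Second, the paper passes to the limit in the nonlinear term by asserting that $\omega_n\rightharpoondown\omega$ $\ast$-weakly in $L^\infty_{\mathrm{loc}}(\mathbb{R};L^p)$ directly implies $\omega_n|\omega_n|^{p-2}\rightharpoondown\omega|\omega|^{p-2}$ $\ast$-weakly in $L^\infty_{\mathrm{loc}}(\mathbb{R};L^{q})$, $q=p/(p-1)$, which for a nonlinear map is not a valid implication as stated; your route (a.e.\ convergence plus $L^{q}$-boundedness plus the Lions weak-convergence lemma already used in Proposition~\ref{s3pro1}, then dominated convergence in time) is precisely the needed repair, and you correctly single it out as the delicate step. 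The one point you should make explicit is the source of the a.e.\ convergence of $\Omega_n(s)$ for fixed $s\le 0$: it does not follow from the Aubin compactness of the velocities (which controls $U_n$, not $\func{curl}U_n$), but it does follow by applying the already-established strong $\mathcal{H}^1$ attraction (Theorem~\ref{s4the1}, or the section argument of Theorem~\ref{T:glob}) to the shifted sequences $U_n(s+\cdot)\in T(h_n+s)B$, which gives $\Omega_n(s)\to\Omega(s)$ strongly in $L^2$, hence a.e.\ along a subsequence, for every fixed $s$.
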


The proof the theorem  is similar to the proof of Theorem \ref{s4the1}
and is based on the proof of the corresponding result for the global attractor.

\begin{theorem}\label{T:globWp}
The set
$$
\mathcal{A}:=\mathfrak{A}(0)=\{u(0),\ u\in \mathfrak{A}\}\subset \mathcal{H}^{1}\cap \mathcal{E}_p
$$
is the global attractor  in the
norm of  $\mathcal{H}^{1}\cap \mathcal{E}_p$.
\end{theorem}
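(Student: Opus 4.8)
The plan is to repeat the scheme of the proof of Theorem~\ref{T:glob}, enhancing the strong $\mathcal{H}^1$-attraction obtained there to strong attraction in the $\mathcal{E}_p$-norm; the only genuinely new ingredient is the $L^p$-enstrophy identity~\eqref{s5r00}. Fix a trajectory set $B\subset\mathcal{K}_+$ bounded in $C_b(\mathbb{R}_+;\mathcal{H}^1\cap\mathcal{E}_p)$, times $h_n\to+\infty$ and $u_n\in T(h_n)B$, and keep the objects (1)--(4) from the beginning of the proof of Theorem~\ref{T:glob}: the complete trajectories $U_n\in\mathcal{K}$ on $[-h_n,\infty)$ with $U_n=u_n$ on $\mathbb{R}_+$, the limit $U\in\mathcal{K}$ with $\Pi_+U=u\in\mathfrak{A}$, and $U_n\rightharpoondown U$ in $\Theta^{\mathrm{loc,w}}$. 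By Theorem~\ref{s5pro1} the vorticities $\Omega_n:=\func{curl}U_n$ and $\Omega:=\func{curl}U$ are bounded in $L^p$ uniformly in $n$ and $t$. Since Theorem~\ref{T:glob} already yields $u_n(0)\to u(0)$ strongly in $\mathcal{H}^1$, by~\eqref{Ep} it remains only to prove that $\omega_n(0):=\func{curl}u_n(0)\to\omega(0):=\func{curl}u(0)$ strongly in $L^p$.

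Strong $\mathcal{H}^1$-convergence gives $\omega_n(0)\to\omega(0)$ strongly in $L^2$, and together with the uniform $L^p$-bound this forces $\omega_n(0)\rightharpoondown\omega(0)$ weakly in $L^p$. Since $L^p$ is uniformly convex for $1<p<\infty$, by the Radon--Riesz property it suffices to prove the norm convergence $\|\omega_n(0)\|_{L^p}\to\|\omega(0)\|_{L^p}$. For this I would integrate the $L^p$-identity~\eqref{s5r00}: multiplying by $pe^{prt}$ and integrating over $[-h_n,0]$ along $U_n$, and over $(-\infty,0]$ along $U$, gives
\begin{equation*}
\|\omega_n(0)\|_{L^p}^p=e^{-prh_n}\|\Omega_n(-h_n)\|_{L^p}^p+p\int_{-h_n}^0 e^{prt}I_n(t)\,dt,\qquad
\|\omega(0)\|_{L^p}^p=p\int_{-\infty}^0 e^{prt}I(t)\,dt,
\end{equation*}
where $I_n(t)=\int_{\mathbb{R}^2}\Omega_n(t)|\Omega_n(t)|^{p-2}\func{curl}g\,dx$ and $I(t)$ is defined analogously with $\Omega$; the boundary terms are harmless because of the uniform $L^p$-bound and the factor $e^{-prh_n}$. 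Everything thus hinges on passing to the limit in $\int_{-h_n}^0 e^{prt}I_n(t)\,dt$.

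This is the main obstacle, and the point where the argument departs from Theorem~\ref{T:glob}: there the analogous term $(U_n,g)_1$ was \emph{linear}, so $\ast$-weak convergence sufficed, whereas here the map $\Omega\mapsto\Omega|\Omega|^{p-2}$ is nonlinear. To handle it I would use the transport structure rather than mere weak convergence. On each fixed window $[-M,0]$ the $\Omega_n$ solve the linear transport equation~\eqref{s2r8} with divergence-free drift $U_n$, source $\func{curl}g$ and data $\Omega_n(-M)$; the drifts converge strongly in $L^1_{\mathrm{loc}}$ (Aubin--Lions--Simon, as in Corollary~\ref{s3cor2}), while $\Omega_n(-M)\to\Omega(-M)$ strongly in $L^2$ follows by applying Theorem~\ref{T:glob} to the sections at the diverging times $-M+h_n$. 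The stability of renormalized solutions of transport equations \cite{DiP-Lio} then yields $\Omega_n\to\Omega$ a.e. on $(-M,0)\times\mathbb{R}^2$. Combined with the uniform $L^p$-bound this gives $\Omega_n|\Omega_n|^{p-2}\rightharpoondown\Omega|\Omega|^{p-2}$ weakly in $L^{p'}_{\mathrm{loc}}$ by \cite[Lemma~1.1.3]{Lio}, and the spatial tail is controlled uniformly in $n$ since $\func{curl}g\in L^p$ (for $|x|>R$ one bounds $\int|\Omega_n|^{p-1}|\func{curl}g|$ by $\|\Omega_n\|_{L^p}^{p-1}\|\func{curl}g\|_{L^p(|x|>R)}$); the time weight $e^{prt}$ kills the tail $t\to-\infty$. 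Hence $\int_{-h_n}^0 e^{prt}I_n(t)\,dt\to\int_{-\infty}^0 e^{prt}I(t)\,dt$, so $\|\omega_n(0)\|_{L^p}\to\|\omega(0)\|_{L^p}$, and the Radon--Riesz property gives the desired strong $L^p$-convergence, whence $u_n(0)\to u(0)$ strongly in $\mathcal{H}^1\cap\mathcal{E}_p$. As in Theorem~\ref{T:glob}, taking $B=\mathfrak{A}$ proves compactness of $\mathcal{A}$ in $\mathcal{H}^1\cap\mathcal{E}_p$, the convergence just shown gives attraction in this norm, and minimality follows since any compact attracting set must contain $\mathcal{K}(0)=\mathcal{A}$.
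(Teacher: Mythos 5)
Your proposal is correct, and its skeleton coincides with the paper's own proof: reduce the problem to the norm convergence $\|\omega_n(0)\|_{L^p}\to\|\omega(0)\|_{L^p}$ by integrating the $L^p$-enstrophy identity \eqref{s5r00} with an exponential weight along $U_n$ over $[-h_n,0]$ and along the complete trajectory $U$ over $(-\infty,0]$, and then upgrade weak $L^p$-convergence plus norm convergence to strong convergence (the paper cites \cite[Theorem~II.37]{Riesz-Nagy}; your appeal to uniform convexity and Radon--Riesz is the same fact). Where you genuinely differ is in the single step that carries all the difficulty: the passage to the limit in the nonlinear integral $\int\Omega_n|\Omega_n|^{p-2}\func{curl}g\,e^{prt}\,dx\,dt$. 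The paper disposes of it in one line, asserting that $\omega_n\to\omega$ $\ast$-weakly in $L^\infty_{\mathrm{loc}}(\mathbb{R};L^p)$ implies $\omega_n|\omega_n|^{p-2}\to\omega|\omega|^{p-2}$ $\ast$-weakly in $L^\infty_{\mathrm{loc}}(\mathbb{R};L^q)$; taken literally, weak convergence does not commute with a nonlinear map, so this step is only justified by additional compactness that the paper leaves implicit. You supply precisely that justification: a.e.\ convergence of $\Omega_n$ via DiPerna--Lions stability for the transport equation (strongly convergent drifts by Aubin--Lions--Simon, strongly convergent data at $t=-M$ by the argument of Theorem~\ref{T:glob} applied at shifted times), then \cite[Lemma~1.1.3]{Lio}, with the spatial tail controlled by $\|\Omega_n\|_{L^p}^{p-1}\|\func{curl}g\|_{L^p(|x|>R)}$ and the time tail by the weight. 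So your argument is a rigorous completion of what the paper states glibly, at the price of extra machinery. One simplification is available to you: having proved $U_n(-M)\to U(-M)$ strongly in $\mathcal{H}^1$ for one fixed time, the identical argument works at every fixed $t\le 0$, giving $\Omega_n(t)\to\Omega(t)$ in $L^2$ (hence a.e.\ along a subsequence) for each $t$; this already yields the needed weak convergence of $\Omega_n|\Omega_n|^{p-2}$ and lets you dispense with the renormalized-solution stability theorem altogether. A minor point in your favour: your integrating factor $pe^{prt}$ is the correct one, while the exponents $e^{2rpt}$ and $e^{2rt}$ appearing in the paper's corresponding display are typos.
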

\begin{proof}
We use the notation introduced in the proof of Theorem~\ref{T:glob},
in which we have shown that
$u_{n}(0)\to u(0)$ strongly in $\mathcal{H}^{1}$ as $n\to\infty$.
Now we have, in addition, that
\begin{equation}\label{curlweak}
\func{curl}u_{n}(0)\to \func{curl}u(0)
\end{equation}
weakly in $L^p$, and we have to show that the convergence~\eqref{curlweak}
is, in fact, strong.

For the functions $U_{n}(t,x),t\geq -h_{n}$ from the proof of Theorem~\ref{T:glob}
we have  the  equality \eqref{s5r00}, more precisely, the functions  $\omega_n=\func{curl}U_n$ satisfy for $t\geq -h_n$
\begin{equation} \label{equalLp}
\frac{1}{p}\frac{d}{dt}\Vert \omega_n (t)\Vert _{L^{p}}^{p}+r\Vert \omega_n
(t)\Vert _{L^{p}}^{p}=\int_{\mathbb{R}^{2}}\omega_n (t,x)\cdot |\omega_n
(t,x)|^{p-2}\func{curl}g(x)dx,
\end{equation}%
Multiplying by $e^{2rpt}$ and integrating from $-h_n$ to $0$ we obtain:
$$
\| \omega_{n}(0)\|_{L^p}^p=\|\omega_{n}(h_n)\|_{L^p}^pe^{-2rph_{n}}+2p
\int_{-h_{n}}^{0}\int_{\mathbb{R}^2}\omega_n |\omega_n|^{p-2}\func{curl}g\,e^{2rt}dxdt.
$$
Since $\omega_n\to\omega$ $\ast$-weakly in $L^\infty_{\mathrm{loc}}(\mathbb{R},L^p)$,
it follows that
$\omega_n |\omega_n|^{p-2}\to\omega |\omega|^{p-2}$ $\ast$-weakly in
$L^\infty_{\mathrm{loc}}(\mathbb{R},L^{q})$,
$q=p/(p-1)$, and therefore we obtain
$$
\lim_{n\to\infty}\| \omega_{n}(0)\|_{L^p}^p=
2p\int_{-\infty}^{0}\int_{\mathbb{R}^2}\omega |\omega|^{p-2}\func{curl}g\,e^{2rt}dxdt.
$$
Multiplying by $e^{2rpt}$ and integrating from $-\infty$ to $0$
the equation~\eqref{equalLp} for $\omega=\func{curl}U$, where $U$ is the
 complete trajectory $U(t)$, $t\in\mathbb{R}$ we obtain
$$
\| \omega(0)\|_{L^p}^p=
2p\int_{-\infty}^{0}\int_{\mathbb{R}^2}\omega |\omega|^{p-2}\func{curl}g\,e^{2rt}dxdt,
$$
which gives that
\begin{equation}\label{normstonorm}
\lim_{n\to\infty}\| \omega_{n}(0)\|_{L^p}=\|\omega(0)\|_{L^p}.
\end{equation}
The weak convergence~\eqref{curlweak} and \eqref{normstonorm} give (are equivalent to)
the strong convergence
$$
\|\omega_n(0)-\omega(0)\|_{L^p}\to0,
$$
see, for instance, \cite[Theorem~II.37]{Riesz-Nagy}.
The proof is complete.

\end{proof}

\section{$L^\infty$-bound and uniqueness \label{Sec6}}

\begin{lemma}\label{Cor:infty}
Let $g\in\mathcal{H}^1\cap\mathcal{E}_\infty$. Then on the
attractor $\mathfrak{A}$
\begin{equation}\label{omegainfty}
\|\omega(t)\|_{L^\infty}\le\frac1r\|\func{curl}g\|_{L^\infty}.
\end{equation}
\end{lemma}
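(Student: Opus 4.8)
The plan is to obtain the $L^\infty$ bound as the limit $p\to\infty$ of the family of $L^p$ bounds already established in Theorem~\ref{s5pro1}. First I would note that the hypothesis $g\in\mathcal{H}^1\cap\mathcal{E}_\infty$ in fact places $g$ in $\mathcal{E}_p$ for \emph{every} finite $p\in[2,\infty)$: since $g\in\mathcal{H}^1$ gives $\func{curl}g\in L^2$ and $g\in\mathcal{E}_\infty$ gives $\func{curl}g\in L^\infty$, the interpolation inequality $\|\func{curl}g\|_{L^p}\le\|\func{curl}g\|_{L^2}^{2/p}\|\func{curl}g\|_{L^\infty}^{1-2/p}$ shows $\func{curl}g\in L^p$ for all $p\ge2$, and $\|g\|_{L^2}<\infty$ is automatic. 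Hence Theorem~\ref{s5pro1}, in the form of estimate~\eqref{s5r10}, applies for each finite $p$ and yields, for every $u\in\mathfrak{A}$ with vorticity $\omega=\func{curl}u$,
\begin{equation*}
\|\omega(t)\|_{L^p}\le r^{-1}\|\func{curl}g\|_{L^p},\qquad 2\le p<\infty,\ t\ge0.
\end{equation*}

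Next I would let $p\to\infty$ on both sides. On the right-hand side the interpolation bound above gives $\limsup_{p\to\infty} r^{-1}\|\func{curl}g\|_{L^p}\le r^{-1}\|\func{curl}g\|_{L^\infty}$. For the left-hand side the point is that $\omega(t)$ is a priori only known to lie in $L^2$, so I must deduce membership in $L^\infty$ together with the bound. Set $A:=r^{-1}\|\func{curl}g\|_{L^\infty}$ and suppose, for contradiction, that $|\omega(t,x)|>A+\delta$ on a set $E$ of positive measure for some $\delta>0$. Because $\omega(t)\in L^2$, the superlevel set $E$ has finite measure $m:=|E|\in(0,\infty)$, and therefore
\begin{equation*}
\|\omega(t)\|_{L^p}\ge\Big(\int_E|\omega(t,x)|^p\,dx\Big)^{1/p}\ge(A+\delta)\,m^{1/p}.
\end{equation*}
Letting $p\to\infty$ and using $m^{1/p}\to1$ gives $\liminf_{p\to\infty}\|\omega(t)\|_{L^p}\ge A+\delta$, which contradicts the uniform estimate $\|\omega(t)\|_{L^p}\le r^{-1}\|\func{curl}g\|_{L^p}$ whose right side has $\limsup$ equal to $A$. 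Hence $|\omega(t,x)|\le A$ almost everywhere, which is exactly \eqref{omegainfty}.

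The only delicate point — and the one I would treat most carefully — is precisely this passage from the scale of $L^p$ estimates to the endpoint $L^\infty$ bound, since $\omega(t)$ is not assumed bounded in advance. The argument above resolves it by using the $L^2$ membership of $\omega(t)$ to guarantee that any superlevel set $\{|\omega(t)|>A+\delta\}$ has finite measure, so that the elementary limit $m^{1/p}\to1$ applies and one recovers $\|\omega(t)\|_{L^\infty}=\lim_{p\to\infty}\|\omega(t)\|_{L^p}$. Everything else is a direct application of Theorem~\ref{s5pro1} and interpolation, so I expect the proof to be short once this endpoint passage is justified.
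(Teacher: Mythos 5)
Your proof is correct and follows essentially the same route as the paper: apply the $L^p$ estimate of Theorem~\ref{s5pro1} for each finite $p\ge2$, bound the right-hand side via the interpolation inequality $\|\func{curl}g\|_{L^p}\le\|\func{curl}g\|_{L^2}^{2/p}\|\func{curl}g\|_{L^\infty}^{1-2/p}$, and let $p\to\infty$. The only difference is that where the paper simply cites the elementary fact that $\|f\|_{L^\infty}=\lim_{p\to\infty}\|f\|_{L^p}$ under a uniform $L^p$ bound, you supply its standard proof via the finite-measure superlevel set argument, which is a welcome but not substantively different addition.
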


\begin{proof}
We have shown in Theorem~\ref{s5pro1} that
$$
\|\omega(t)\|_{L^p}\le\frac1r\|\func{curl}g\|_{L^p}
$$
for every $p<\infty$. It remains to let $p\to\infty$ in this estimate.

In fact, since $\func{curl}g\in L^2\cap L^\infty$, in view of the elementary inequality
\begin{equation}\label{mult}
\|f\|_{L^p}\le\|f\|_{L^2}^{2/p}\|f\|^{1-2/p}_{L^\infty},\quad p\ge2,
\end{equation}
we find that
$$
\|\omega(t)\|_{L^p}\le\frac1r\|\func{curl}g\|_{L^p}\le
\frac1r\|\func{curl}g\|_{L^2}^{2/p}\|\func{curl}g\|^{1-2/p}_{L^\infty},
$$
and
\begin{equation}\label{ptoinf}
\limsup_{p\to\infty}\|\omega(t)\|_{L^p}\le
\frac1r\|\func{curl}g\|_{L^\infty}.
\end{equation}
This gives~\eqref{omegainfty} because
$\|f\|_{L^\infty}=\lim_{p\to\infty}\,\|f\|_{L^p}$ if it is known
that $\|f\|_{L^p}\leq C$ for all $p\geq p_{0}$.
\end{proof}

\begin{lemma}\label{s5cor1}
Let $g\in \mathcal{H}^{1}\cap \mathcal{E}_{\infty }$. Then
the trajectory attractor $\mathfrak{A}$ belongs to the space $C_{b}(\mathbb{R%
}_{+};\mathcal{W}^{1,p})$ for any $2\le p<\infty$ and for every $ u\in \mathfrak{A}$
\begin{equation} \label{s5r14}
\|\nabla u\| _{C_{b}(\mathbb{R}_{+};L^{p})}\leq Cp
\|\func{curl} g\| _{L^{\infty }},
\end{equation}%
where the constant $C=c'/r$ is independent of  $p$ as $p\to\infty$.
\end{lemma}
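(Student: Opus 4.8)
The goal is to bound the $L^p$-norm of $\nabla u$ on the attractor by $Cp\|\func{curl}g\|_{L^\infty}$ with $C$ independent of $p$. Since $\|\nabla u\|_{L^p}\sim\|\func{curl}u\|_{L^p}=\|\omega\|_{L^p}$ for divergence-free $u$, and Lemma~\ref{Cor:infty} already gives $\|\omega(t)\|_{L^p}\le r^{-1}\|\func{curl}g\|_{L^\infty}$ (via $\|\func{curl}g\|_{L^p}\le\|\func{curl}g\|_{L^\infty}$ once we also control the $L^2$ factor), the \emph{only} thing that needs proof is the constant $c(p)$ in the elliptic estimate
$$
\|\nabla u\|_{L^p}\le c(p)\,\|\func{curl}u\|_{L^p},
$$
together with the claim that $c(p)$ grows at most linearly in $p$. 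The plan is therefore to establish this inequality with explicit control of the constant and then combine it with Lemma~\ref{Cor:infty}.

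\textbf{The elliptic estimate.} For $\func{div}u=0$ the components of $\nabla u$ are recovered from $\omega=\func{curl}u$ by the Biot--Savart law: writing $u=\nabla^\perp\Delta^{-1}\omega$, each entry of $\nabla u$ is a second-order Riesz-type transform applied to $\omega$, i.e.\ a Calder\'on--Zygmund operator $R_{ij}$ with $\partial_i u^k = R\,\omega$. The $L^p\to L^p$ boundedness of these operators is classical; the content here is the \emph{size} of the operator norm. The plan is to invoke the known sharp (or sharp-order) bound for the $L^p$-norm of the Riesz transforms / singular integrals of this type, namely that the operator norm is $O(p)$ as $p\to\infty$ (equivalently $O(p')$ as $p\to1$; for the double Riesz transforms the relevant blow-up is linear in $p$). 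I would cite or reproduce the standard argument: interpolate off the $L^2$ bound using the Marcinkiewicz/Riesz--Thorin machinery, or more efficiently use the known estimate $\|R_j\|_{L^p\to L^p}\le C\max(p,p')$ for first-order Riesz transforms and compose, so that the second-order transforms satisfy $\|\partial_i u\|_{L^p}\le Cp\,\|\omega\|_{L^p}$ for $p\ge2$.

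\textbf{Assembling the bound.} With $c(p)\le Cp$ in hand, for $u\in\mathfrak A$ and each $t\ge0$ I combine
$$
\|\nabla u(t)\|_{L^p}\le c(p)\,\|\omega(t)\|_{L^p}\le Cp\cdot\frac1r\|\func{curl}g\|_{L^\infty},
$$
where the middle inequality uses Lemma~\ref{Cor:infty} (or directly $\|\omega(t)\|_{L^p}\le r^{-1}\|\func{curl}g\|_{L^p}$ from Theorem~\ref{s5pro1} together with $\|\func{curl}g\|_{L^p}\le\|\func{curl}g\|_{L^\infty}$ after absorbing the finite $L^2$ contribution into the constant). Taking the supremum over $t\ge0$ and absorbing $C/r$ into a single constant $C=c'/r$ yields~\eqref{s5r14}. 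Membership in $C_b(\mathbb{R}_+;\mathcal W^{1,p})$ then follows from the already-established continuity $\omega\in C_b(\mathbb{R}_+;L^p)$ of Theorem~\ref{s5the1} combined with the same elliptic estimate applied to differences $u(t)-u(s)$.

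\textbf{Main obstacle.} The routine part is the attractor bound; the delicate point is tracking the \emph{linear-in-$p$} growth of the Calder\'on--Zygmund constant rather than an uncontrolled $c(p)$. I expect the crux to be either citing the sharp operator-norm asymptotics for these specific second-order transforms or giving a self-contained interpolation argument that produces the factor $p$ (and verifying it does not secretly hide a worse dependence through the composition of two first-order transforms). Once that constant is pinned down as $O(p)$, everything else is a direct substitution.
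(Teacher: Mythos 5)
Your plan follows the paper's own proof almost exactly: the paper also passes to the stream function via the Biot--Savart law $u=\nabla^\perp\psi$, $\Delta\psi=\omega$, bounds $|\nabla u|$ pointwise by the second derivatives of $\psi$, and then invokes an elliptic estimate with linear-in-$p$ constant, namely Yudovich's inequality \eqref{Yudest}, $\|\partial_i\partial_j\psi\|_{L^p}\le c_n\frac{p^2}{p-1}\|\omega\|_{L^p}$, before combining it with the vorticity bound $\|\omega(t)\|_{L^p}\le r^{-1}\|\func{curl}g\|_{L^p}\le r^{-1}\|\func{curl}g\|_{L^2}^{2/p}\|\func{curl}g\|_{L^\infty}^{1-2/p}$ coming from Theorem~\ref{s5pro1} and Lemma~\ref{Cor:infty}. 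So the skeleton of your argument (a second-order singular-integral estimate with $O(p)$ operator norm, plus the attractor bound on the vorticity) is exactly the paper's; the only difference is that you cite Calder\'on--Zygmund theory where the paper cites Yudovich.

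The one step that would fail if taken literally is your proposal to ``use the known estimate $\|R_j\|_{L^p\to L^p}\le C\max(p,p')$ for first-order Riesz transforms and compose'': composing two operators whose norms grow like $p$ yields a bound of order $p^2$, not $p$. This is not a cosmetic loss --- the linear growth in \eqref{s5r14} is precisely what makes the choice $p=\ln(K/E(t))$ work in the Yudovich uniqueness argument of Theorem~\ref{s5the2}; with a $p^2$ constant the resulting differential inequality no longer forces $E\equiv0$. Your ``main obstacle'' paragraph correctly flags this danger, and the correct resolution is the first alternative you name: treat $\partial_i\partial_j\Delta^{-1}$ as a single Calder\'on--Zygmund operator, for which interpolation between the weak-$(1,1)$ and $L^2$ bounds together with duality gives the $O(p)$ norm as $p\to\infty$ --- this is exactly the content of the estimate \eqref{Yudest} that the paper cites. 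With that route made explicit (cited or proved), your argument is complete and coincides with the paper's.
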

\begin{proof}
We recall that the vector function $u$ is recovered from the
vorticity $\omega$ by the  Biot-Savart law
$$
u=\nabla^\perp \psi:=(-\partial_2\psi,\partial_1\psi),
$$
where
$$
\psi=\frac1{2\pi}\int_{\mathbb{R}^2}\ln|x-y|\omega(y)dy
$$
solves in $\mathbb{R}^2$ the equation
\begin{equation}\label{Yud}
\Delta\psi=\omega.
\end{equation}

Next, we clearly have
$$
|\nabla u(x)|<\sum_{i,j=1}^2\left|\frac{\partial^2\psi(x)}{\partial x_i\partial x_j}\right|.
$$
Therefore \eqref{s5r14} follows from the Yudovich elliptic estimate
for the equation \eqref{Yud}:
\begin{equation}\label{Yudest}
\left\|\frac{\partial^2\psi(x)}{\partial x_i\partial x_j}\right\|_{L^p({\mathbb{R}^n})}
\le c_n\frac{p^2}{p-1}\|\omega\|_{L^p({\mathbb{R}^n})},
\end{equation}
where $1<p<\infty$ (see \cite{Yud1}--\cite{Yud4}) and
\eqref{ptoinf}.
\end{proof}

Finally, we prove that if  $g\in \mathcal{H}^{1}\cap\mathcal{E}_{\infty }$,
 then the solutions of the dissipative 2D Euler
system lying on the trajectory attractor $\mathfrak{A}$ are unique.
The damping does not play a role here and the proof closely
follows the celebrated  Yudovich
uniqueness theorem for the  2D Euler system (see \cite{Yud1,
Yud2}).

\begin{theorem}
\label{s5the2}Let $g\in \mathcal{H}^{1}\cap \mathcal{E}_{\infty }$,
and let $u_{1}(\cdot),u_{2}(\cdot)$ be two weak solutions of the
damped 2D Euler system (\ref{s1r1}). Let the  initial vorticities
$\omega _{i}(0)=\func{curl}u_{i}(0)$ be bounded: $\omega
_{1}(0),\omega _{2}(0)\in L^{\infty}$. Then, $u_{1}(0)=u_{2}(0)$
implies $u_{1}(t)=u_{2}(t)$ for all $t\in [0,M]$.
\end{theorem}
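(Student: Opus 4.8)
The plan is to carry out the classical Yudovich uniqueness scheme, exploiting the linear-in-$p$ growth of $\|\nabla u\|_{L^p}$ guaranteed by the bounded vorticity (Lemma~\ref{s5cor1} and \eqref{Yudest}). First I would set $w:=u_1-u_2$ and subtract the two copies of \eqref{s1r1}. Writing the difference of the nonlinearities as $(u_1,\nabla)u_1-(u_2,\nabla)u_2=(w,\nabla)u_1+(u_2,\nabla)w$ and taking the $L^2$ scalar product with $w$, the pressure term drops since $\func{div}w=0$, the term $((u_2,\nabla)w,w)$ vanishes by the orthogonality relation \eqref{L2orth}, and the damping contributes the favorable term $r\|w\|^2\ge0$. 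This yields
\[
\tfrac12\tfrac{d}{dt}\|w(t)\|^2+r\|w(t)\|^2=-((w,\nabla)u_1,w)\le\int_{\mathbb{R}^2}|w|^2|\nabla u_1|\,dx.
\]
The computation is legitimate because, as recorded next, both $u_i$ have vorticity in $L^2\cap L^\infty$, so $\nabla u_i\in L^p$ for every finite $p$, which is ample regularity to justify the energy identity for $w$.

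Second, I would assemble the two quantitative inputs. Since each $\omega_i=\func{curl}u_i$ solves the transport equation \eqref{s2r8}, Theorem~\ref{s5the1} applied to $\omega_i$ with its own initial datum gives $\|\omega_i(t)\|_{L^p}\le e^{-rt}\|\omega_i(0)\|_{L^p}+r^{-1}\|\func{curl}g\|_{L^p}$, and letting $p\to\infty$ exactly as in Lemma~\ref{Cor:infty} produces the uniform bound $\|\omega_i(t)\|_{L^\infty}\le e^{-rt}\|\omega_i(0)\|_{L^\infty}+r^{-1}\|\func{curl}g\|_{L^\infty}=:K$ on $[0,M]$. Combined with $\omega_i\in L^\infty(0,M;L^2)$ and the interpolation \eqref{mult}, the norms $\|\omega_i(t)\|_{L^p}$ are bounded uniformly in both $p\ge2$ and $t\in[0,M]$. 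The Yudovich elliptic estimate \eqref{Yudest} then gives $\|\nabla u_1(t)\|_{L^p}\le Cp$ with $C$ independent of $p$ (using $\tfrac{p^2}{p-1}\le2p$ for $p\ge2$).

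Third comes the estimate of the right-hand side. H\"older with exponents $p$ and $p'=p/(p-1)$ gives $\int|w|^2|\nabla u_1|\le\|\nabla u_1\|_{L^p}\|w\|_{L^{2p'}}^2\le Cp\,\|w\|_{L^{2p'}}^2$. Since $2p'=2p/(p-1)\in(2,4]$ for $p\ge2$, I would interpolate $\|w\|_{L^{2p'}}$ between $L^2$ and $L^4$ with exponents $\theta=(p-2)/p$ and $1-\theta=2/p$; the $L^4$-norm of $w$ is bounded uniformly on $[0,M]$ by the Ladyzhenskaya inequality and $u_i\in C([0,M];\mathcal{H}^1)$. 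Setting $E(t):=\|w(t)\|^2$ and discarding the favorable damping term, this produces the differential inequality
\[
E'(t)\le A\,p\,E(t)^{\,1-2/p},
\]
valid for \emph{every} $p\ge2$, with $A$ independent of $p$.

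Finally, I would optimize in $p$. For $0<E<e^{-1}$ the choice $p=2\ln(1/E)$ collapses the whole family of inequalities to the single Osgood inequality $E'\le B\,E\ln(1/E)$. Since $u_1(0)=u_2(0)$ gives $E(0)=0$ and $\int_0^a\frac{dE}{E\ln(1/E)}=+\infty$, Osgood's lemma forces $E\equiv0$ on $[0,M]$, i.e.\ $u_1\equiv u_2$. (Equivalently, comparison with the maximal solution $(At)^{p/2}$ of the associated scalar ODE, letting $p\to\infty$ on the short interval $t<1/A$ and then iterating, yields the same conclusion.) The main obstacle will be the bookkeeping of the $p$-dependence: one must verify that $K$ and the constant in \eqref{Yudest} combine into $\|\nabla u_1\|_{L^p}\le Cp$ with $C$ genuinely independent of $p$, and that the interpolation exponents reproduce exactly $E^{1-2/p}$, so that optimizing yields the borderline modulus $E\ln(1/E)$ — anything worse than logarithmic would destroy Osgood uniqueness. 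The other delicate point is establishing the uniform-in-time propagation of the $L^\infty$ vorticity bound for \emph{arbitrary} (possibly non-unique) weak solutions, not merely for those lying on $\mathfrak{A}$.
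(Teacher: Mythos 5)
Your proposal is correct and is, at its core, the same Yudovich scheme the paper uses: an $L^2$ energy estimate for the difference $w=u_1-u_2$ in which one nonlinear term vanishes by the orthogonality \eqref{L2orth}, a H\"older estimate with exponent $p$, the linear-in-$p$ gradient bound coming from the Yudovich elliptic estimate \eqref{Yudest}, and finally the optimization $p\sim\ln(1/E)$ producing the borderline Osgood modulus $E\ln(1/E)$. The differences are minor but worth recording. First, you interpolate $\|w\|_{L^{2p'}}$ between $L^2$ and $L^4$ (Ladyzhenskaya plus the uniform $\mathcal{H}^1$ bound), getting $E'\le A\,p\,E^{1-2/p}$, whereas the paper interpolates between $L^2$ and $L^\infty$ (using $W^{1,p_0}(\mathbb{R}^2)\subset C_b(\mathbb{R}^2)$, $p_0>2$), getting $E'\le C_2\,p\,E^{1-1/p}$; both collapse to the same logarithmic inequality, and your route avoids invoking an $L^\infty$ bound on $w$, which is a small simplification. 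Second, and more substantively, you source the bound $\|\nabla u_i(t)\|_{L^p}\le Cp$ by propagating the vorticity bound from the initial data: Theorem~\ref{s5the1} applied to each $\omega_i$, followed by the $p\to\infty$ passage of Lemma~\ref{Cor:infty}, so that the constant depends on $\|\omega_i(0)\|_{L^\infty}$ as well as on $\|\func{curl}g\|_{L^\infty}$. The paper instead cites the ``key estimate'' \eqref{s5r14}, which in Lemma~\ref{s5cor1} is stated only for trajectories lying on the attractor $\mathfrak{A}$; since Theorem~\ref{s5the2} concerns \emph{arbitrary} weak solutions with bounded initial vorticity, your derivation is the one that literally matches the hypotheses, and it resolves the very point you flag as delicate at the end (the paper's own Proposition~\ref{s2pro1} and Theorem~\ref{s5the1} hold for any $u\in L^\infty(0,T;\mathcal{H}^1)$, so the propagation argument is fully supported). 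Third, you close with Osgood's lemma, while the paper substitutes $p=\ln(K/E)$ with $K$ large (which also keeps $p>2$ without restricting to small $E$) and integrates the resulting differential inequality explicitly from $\varepsilon$ to $t$ before letting $\varepsilon\to0^+$; these endgames are equivalent.
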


\begin{proof} Let $u(t)=u_{1}(t)-u_{2}(t)$. We set $E(t)=\| u_{1}(t)-u_{2}(t)\| ^{2}$.
Since $u\in C([0,M];\mathcal{H})$,  we have  $E\in C([0,M])$.
Suppose that $E(t)>0$ for $t\in (0,\delta )$ for some $\delta $ (otherwise,
the theorem is proved). Then $u(t)$ solves the equation:
\begin{equation*}
\partial _{t}u+(u_{1},\nabla )u-(u,\nabla )u_{2}+ru+\nabla(p_1-p_2)=0.
\end{equation*}
 Multiplying this equation by $u(t)$ and integrating
over $\mathbb{R}^{2},$ we have%
\begin{equation*}
\frac{d}{dt}E(t)+2rE(t)=\int_{\mathbb{R}^{2}}((u,\nabla )u_{2},u)dx\leq
\Vert \nabla u_{2}(t)\Vert _{L^{p}}\Vert u\Vert _{L^{2q}}^{2},
\end{equation*}%
where $p\ge2$ is arbitrary and $1/p+1/q=1$.
In  view of \eqref{mult},
 $$
 \|u\|_{L^{2q}}\le\|u\|_{L^{2}}^{1-1/p}\|u\|_{L^\infty}^{1/p},
 $$
 where $u(t)$ is bounded  in $L^{\infty }$, since $W^{1,p_0}(\mathbb{R}^2)\subset C_b(\mathbb{R}^2)$ for a $p_0>2$.
 Dropping the term $2rE(t)$ on the left-hand side and using the key estimate~\eqref{s5r14}, we obtain
\begin{equation}
\frac{d}{dt}E(t)\leq C_{2}p[E(t)]^{1-1/p},  \label{s5r16}
\end{equation}%
where  $C_{2}=C_2(\|g\|_{1},\|\func{curl}g\|_{L^\infty})$ is independent of $p$
as $p\to\infty$.  We now
pick the exponent $p$ in (\ref{s5r16}) in an optimal way, namely, we set
\begin{equation*}
p=\ln \frac{K}{E(t)},
\end{equation*}%
where $K>1$ is sufficiently large to guarantee that $p>2$ for all
$t\in (0,\delta )$ (recall that $E(t)$ is a bounded function). We derive from
\eqref{s5r16} the inequality
\begin{equation*}
\frac{d}{dt}E(t)\leq C_{2}E(t)\ln \frac{K}{E(t)}\left( E(t)^{-1/\ln \frac{K}{%
E(t)}}\right) \leq C_{3}E(t)\ln \frac{K}{E(t)},
\end{equation*}%
where we used the inequality
$E^{-1/\ln \frac{K}{E}}\leq e$ which (since $\ln (K/E)>0$)  is equivalent to the assumed inequality
$K\ge1$. We finally obtain
\begin{equation*}
\frac{d}{dt}\ln \frac{E(t)}{K}\leq -C_{3}\ln \frac{E(t)}{K}.
\end{equation*}%
Integrating this differential inequality over $[\varepsilon ,t]$ we
arrive at%
$$
\ln \frac{E(t)}{K}\leq \ln \frac{E(\varepsilon )}{K}e^{-C_{3}(t-\varepsilon)},
$$
which gives
$$
 E(t)\leq K\left[ \frac{E(\varepsilon )}{K}\right] ^{e^{-C_{3}(t-\varepsilon )}}
$$
for all sufficiently small $\varepsilon >0$. Passing to the limit
$\varepsilon \to 0+$ ($E(\varepsilon )$ is continuous in $\varepsilon$
 and $E(0)=0$), we obtain $E(t)\equiv 0.$ The theorem is proved.
\end{proof}

\end{document}